\definecolor{NoteColor}{rgb}{1,0,0}
\newtheorem{theorem}{\rm\bf Theorem}[section]
\newtheorem{proposition}[theorem]{\rm\bf Proposition}
\newtheorem{lemma}[theorem]{\rm\bf Lemma}
\newtheorem{corollary}[theorem]{\rm\bf Corollary}
\newtheorem*{theorem 1}{\rm\bf Proposition 1}
\newtheorem*{theorem 2}{\rm\bf Proposition 2}
\theoremstyle{definition}
\theoremstyle{remark}
\def\interieur#1{\mathord{\mathop{\kern 0pt #1}\limits^\circ}}
\title[Non-orientable surfaces]{Hyperbolic metrics, measured foliations and pants decompositions for non-orientable surfaces}
\author{A.\ Papadopoulos}
\address{Institut de Recherche Math\'ematique Avanc\'ee\\Universit\'e de Strasbourg\\7 rue Ren\'e Descartes\\F- 67084 Strasbourg, France and The Erwin Schr\"odinger International Institute for Mathematical Physics, Boltzmanngasse 9,  A-1090 Wien, Austria}
\email{athanase.papadopoulos{\char'100}math.unistra.fr}
\author{R.\ C.\ Penner}
\address{Center for the Topology and Quantization of Moduli Spaces\\Department of Mathematics\\
Aarhus University\\
DK-8000 Aarhus C, DK\\
 Math and Physics Departments, Caltech\\
Pasadena, CA 91125 USA and The Erwin Schr\"odinger International Institute for Mathematical Physics, Boltzmanngasse 9,  A-1090 Wien, Austria}
\email{rpenner{\char'100}caltech.edu}
\thanks{It is a pleasure for both authors to thank the Erwin Schr\"odinger Insitute for hosting and the GEometric structures And Representation varieties Network of the US National Science Foundation for partial support of an excellent trimester in Vienna when this work began. The first-named author was partially supported by the ANR French project ModGroup and the second- by QGM (Centre for the Quantum Geometry of Moduli Spaces) funded by the
Danish National Research Foundation.}
\begin{document}

\begin{abstract} We provide analogues for non-orientable surfaces with or without boundary or punctures of several basic theorems in the setting of the Thurston theory of surfaces which were developed so far only in the case of orientable surfaces. Namely, we provide natural analogues for non-orientable surfaces of the Fenchel-Nielsen theorem on the parametrization of the Teichm\"uller space of the surface, the Dehn-Thurston theorem on the parametrization of measured foliations in the surface, and the Hatcher-Thurston theorem, which gives a complete minimal set of moves between pair of pants decompositions of the surface.  For the former two theorems, one in effect drops the twisting number for any curve
in a pants decomposition which is 1-sided, and for the latter, two further elementary moves on pants decompositions are
added to the two classical moves.
\end{abstract}

\maketitle


\noindent AMS Mathematics Subject Classification:    30F60, 32G15, 57M50, 57N16. 
\smallskip

\noindent Keywords: non-orientable surfaces, pants decompositions, Teichm\"uller space, Thurston's boundary, Fenchel-Nielsen Theorem, Dehn-Thurston Theorem, Hatcher-Thurston Theorem.
\smallskip

\maketitle
\tableofcontents

\section{Introduction}

The study of surfaces together with their associated topological, combinatorial and geometric constructs and the action of surface mapping class groups on the corresponding spaces has been carried out essentially in the case of orientable surfaces.  Our aim in this paper is to present three important results from this body of work, namely, the Fenchel-Nielsen theorem on the parametrization of the Teichm\"uller space\footnote{
The Riemann surface structure in the non-orientable case is given by charts with transition functions which are either analytic or anti-analytic and can be regarded as an appropriate class of either complete finite-area hyperbolic or conformal metric on the underlying smooth manifold.  These are sometimes called ``Klein surfaces'' \cite{Klein1,Klein2,Seppala}.
} of the surface, the Dehn-Thurston theorem on the parametrization of the space of measured foliations in the surface
and the Hatcher-Thurston theorem, which provides a complete minimal set of moves between pair of pants decompositions of the surface, in the setting of possibly non-orientable surfaces.

We shall find, in effect, that if ${\mathcal P}$ is a pants decomposition of a possibly non-orientable surface $S$, i.e., each component of $S-\cup{\mathcal P}$ is a pair of pants, then the hyperbolic lengths (transverse measures respectively)
of all the curves in ${\mathcal P}$ together with twisting numbers defined only for the 2-sided curves in ${\mathcal P}$ give Fenchel-Nielsen coordinates (Dehn-Thurston coordinates respectively) on the Teichm\"uller space of $S$ (space of measured foliations in $S$ respectively).  
The twisting numbers we introduce here for both orientable and non-orientable surfaces provide a new point of view on the usual definitions,
see, e.g., \cite{FN,FN2,Penner-thesis,Thurston} in the orientable case.  In fact, our new incarnations of twisting numbers depend upon orientations 
of the curves in a pants decomposition in order to conveniently specify the right or left sense of the twisting as well as a basepoint in each pants curve.
We shall be more specific on this below.

We shall also find that the four moves on pants decompositions indicated in Figure \ref{fig:elementary} give a complete minimal set of moves acting transitively on the set of all pants decompositions of $S$.  Here, as well as in other figures of
this paper, the discs with asterisks drawn within them represent cross caps; that is,
their interiors should be removed and antipodal points in each resulting
boundary component identified, i.e., the depicted curve containing the asterisk bounds a M\"obius band in the surface. 

Though so natural, these new results are however non-existent in the literature and are apparently and surprisingly heretofore unknown.  In contrast, the existence of a sphere of projective measured foliations compactifying the Teichm\"uller space of a non-orientable surface without boundary to a closed piecewise-linear ball (Theorem \ref{thm:TBp}) has several times been stated without proof in the literature.   Moreover, on the level of 
homeomorphism classes of pants decompositions required for 1+1 dimensional topological quantum field theory with possibly non-orientable
surfaces as cobordisms, the corresponding weaker transitivity result was proved in \cite{AN}.

Holes in a surface may either be taken as boundary components or as punctures, and we treat the former in the body of the text and relegate the latter
to appendix \ref{app:zero}.  We begin by reviewing standard notions and results for non-orientable surfaces that differ from their analogues in the orientable case.
\begin{figure}
 \begin{center}
 \includegraphics[width=10cm]{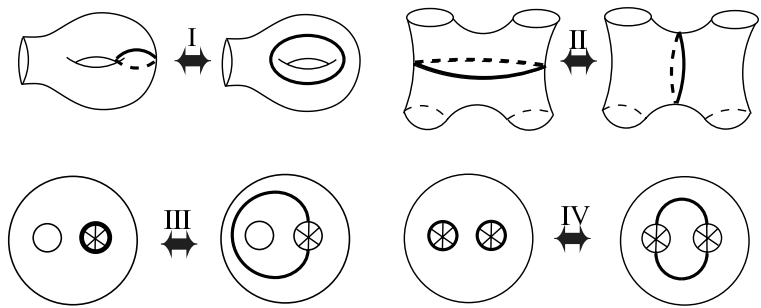}
 \caption{The four elementary moves. 
In each case, one or more of the depicted boundary components
may bound a M\"obius band both before and after the move.  In Move III, a 1-sided curve is replaced by another 1-sided curve, and in Move IV, two 1-sided curves are replaced by a 2-sided curve or conversely. 
}
 \label{fig:elementary}%
\end{center}
\end{figure}

First, recall that in a non-orientable surface $N$ there is a distinction between 1-sided and 2-sided simple closed curves (depending respectively on whether
a regular neighborhood of the curve is a M\"obius band or annulus) and a further distinction between primitive and non-primitive 
2-sided curves (as elements of the fundamental group of $N$). A 2-sided simple closed curve is non-primitive if and only if it bounds a M\"obius band embedded in $N$, whose core is of course a 1-sided curve.

On the non-orientable surface $N$, there is a notion of Dehn twist along a 2-sided curve $c$  defined as in the orientable setting, up to the choice of a normal direction along the curve.  Namely, 
if $c$ is a 2-sided curve in $N$, then its annular neighborhood
in $N$ supports Dehn twists as usual; the difference is that we can only
determine the sense, right or left, of the Dehn twist relative to a specified normal direction to $N$ at a particular point on $c$ and so must
draw arrows to indicate the sense of a Dehn twist.
It is easy to see that a Dehn twist along a non-primitive 2-sided curve is isotopic rel boundary to the identity mapping: Indeed, consider the image of an arc decomposing the corresponding
M\"obius band into a rectangle; since this arc is found to be invariant under the Dehn twist up to proper homotopy, the result then follows from Alexander's trick.

\begin{figure}
 \begin{center}
 \includegraphics[height=8cm]{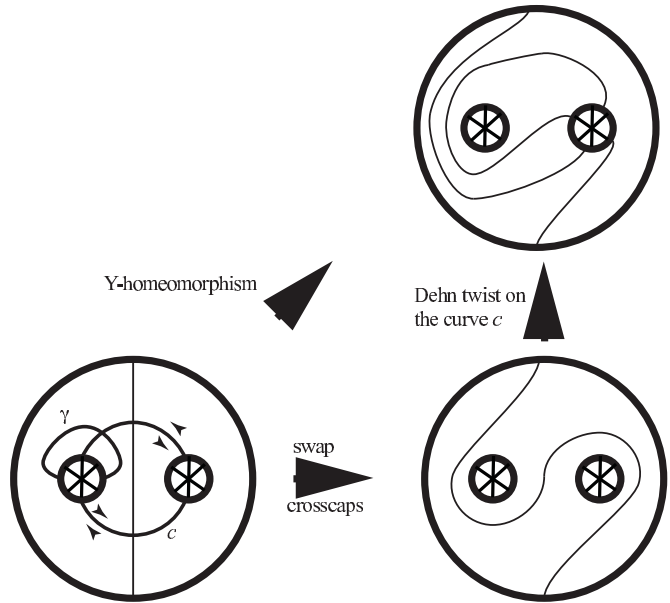}
 \caption{The Y-homeomorphism is supported on a Klein bottle minus a disk.  Note that the 1-sided curve $\gamma$ occurs in a neighborhood of the cross cap 
 connecting antipodal points.}
 \label{fig:Yhomeo}%
\end{center}
\end{figure}

In addition to Dehn twists, one must consider yet  another class of basic homeomorphisms \cite{Lickorish65} in the non-orientable case as follows.
Suppose that
$\gamma$ is a 1-sided and $c$ a 2-sided curve in $N$ so that $\gamma$ and $c$ meet transversely in a single point.
Let $K_1\subseteq N$ be a neighborhood of $\gamma\cup c$, which is homeomorphic to a Klein bottle
minus a disk, and let $M\subset K_1$ denote a neighborhood of $\gamma$, which is homeomorphic to a M\"obius band.
Finally, define the {\it Y-homeomorphism} $Y=Y_{\gamma, c}:N\to N$ to be the result of pushing $M$ once along $\gamma$ keeping pointwise fixed
the boundary of $K_1$ and extending by the identity on $N-K_1$; Figure \ref{fig:Yhomeo} illustrates the effect of $Y$ on an arc properly embedded in $K_1$.
 Put another way, $Y$ is the composition of the homeomorphism interchanging the two
cross caps followed by the Dehn twist along the curve $c$ as is also illustrated in the figure.  There are thus two Y-homeomorphisms
depending on the sense of the twisting, each one squaring to a distinct Dehn twist along the 2-sided curve $\partial K_1$.

Lickorish showed in \cite{Lickorish65} that the mapping class group (i.e., the group of homotopy classes of self-homeomorphisms\footnote{
In contrast, Lickorish called this the {\sl homeotopy} group of the surface reserving the term {\sl mapping class group}
for homotopy classes of orientation-preserving homeomorphisms in the orientable case.}) of a non-orientable surface is not generated by Dehn twists alone, but that any mapping class is isotopic to a composition of Dehn twists along a set of simple closed curves followed by a $Y$-homeomorphism.  Chillingworth, improving the result of Lickorish, exhibited a finite set of Dehn twists and $Y$-homeomorphisms which suffice \cite{Chillingworth1969}. In fact, Chillingworth's generators consist of a finite number of Dehn twists and a single additional $Y$-homeomorphism. 
Szepietowski \cite{Szep} gives  a minimal generating set in the non-orientable case as Humphries \cite{Humphries} had done in the orientable case.

Scharlemann (\cite{Scharlemann}, Theorem 1.1) showed that on a non-orientable surface, the set of 1-sided curves is isolated in the space ${\mathcal{PMF}}_0$ of projective measured foliations of compact support.  This implies in particular that simple closed curves are not dense in ${\mathcal{PMF}}_0$.  Danthony and Nogeuira showed that in ${\mathcal{PMF}}_0$, almost all measured foliations have a compact leaf which is 1-sided \cite{DN}. This contrasts with the case of orientable surfaces where almost all measured foliations are uniquely ergodic with dense leaves.

Korkmaz computed the first homology group of a closed non-orientable surface \cite{Korkmaz1998}; his result is analogous to that of Powell for orientable surfaces (who showed that for genus at least three the group is trivial) \cite{Powell1978}, completed in genus two by Mumford \cite{Mumford1967}.


This paper is organized as follows.  Section 2 contains background material sufficient to precisely state our three main results therein.  Sections 3 and 4 are respectively dedicated to the generalized Fenchel-Nielsen and Dehn-Thurston Theorems.  Section 5 provides an analysis of pants decomposition of the Klein bottle minus a disk such as is necessary
to complete the proof of the generalized Hatcher-Thurston Theorem in section 6.  Section 7 finally contains closing remarks.

The proof that the space of projectivized measured foliations in a non-orientable surface with boundary itself provides a boundary of Teichm\"uller space as in the orientable case is sketched in appendix \ref{app:a}.  Appendix \ref{app:zero} treats each boundary component as a puncture in effect dropping twisting parameters and distinguished points
(and for those in ${\mathcal{MF}}_0$ that have compact support, also dropping the intersection numbers) for puncture-parallel pants curves.

In our exposition, we shall sometimes be brief in presenting arguments which in the non-orientable case follow closely the proofs in the orientable case,
however, we shall present detailed proofs when there are significant differences.  We shall also give details  concerning metrics on and measured foliations in surfaces with boundary that are not usually considered.

We would like to thank \"Oyk\"u Yurtta\c s for valuable discussions on this work. A sequel \cite{PPY} to this paper  is in collaboration with her.
Let us also gratefully acknowledge helpful input from Sergey Natanzon.

\section{Background and Statements of Results}\label{s:background}

In this section, we present current generalized versions of results that are well-known for
a connected orientable surface $F_{g,r}$. Our generalization concerns a non-orientable surface  $N_{g,r}$,
of genus $g\geq 0$ with $r\geq 0$
smooth boundary components where $2-2g-r<0$, respectively, $2-g-r<0$. 
Recall that the \emph{genus} of a non-orientable surface is defined, as in the orientable case, as the maximum number of simple closed curves  whose complement is connected;  any closed non-orientable surface of genus $g$ can be obtained as a connected sum of $g$ projective planes, whose Euler characteristic  is given by $2-g$.
We may sometimes say simply that a surface is {\it bordered} if it has boundary, i.e., if $r\geq 1$.

A \emph{pair of pants} is a surface homeomorphic to the sphere with the interiors of 3 disjoint closed disks removed. A {\it pants decomposition} ${\mathcal P}$ of an orientable or non-orientable surface is the (isotopy class) of a family of disjointly embedded simple closed curves whose complementary components are pairs of pants.  In particular, each boundary component occurs in any pants decomposition.  Of course, each pants curve in an orientable surface is 2-sided, but in a non-orientable surface, some may be 1-sided.  
In any case, we set
$\mathcal{P}=\mathcal{P}_1\cup \mathcal{P}_2$, where $\mathcal{P}_1$ is the set of 1-sided curves and $\mathcal{P}_2$ is the set of 2-sided curves. We consider the boundary curves of the surface as 2-sided (although they only have ``one side" in the surface).

Embedded non-primitive curves never occur in a pants decomposition since by definition they bound a M\"obius band.
Note that in the orientable case $S=F_{g,r}$, we have $\#{\mathcal P}_1=0$ with $\#{\mathcal P}_2=3g-3+2r$.  In contrast in the non-orientable case $S=N_{g,r}$, we can achieve a maximum of $\#{\mathcal P}_1=g$ with $\#{\mathcal P}_2=g+r-3$, and if there is a pants decomposition ${\mathcal P}$
with $\#{\mathcal P}_i=p_i$, for $i=1,2$ and $p_1\geq 2$, then there is also a pants decomposition with ($p_1-2$) 1-sided and ($p_2+1$) 2-sided curves;
in particular, if $g=2k$ is even, we can achieve $\#{\mathcal P}_1=0$ in the non-orientable case as well with $\#{\mathcal P}_2=g+r+k-3$.  

We shall rely heavily on geodesic representatives of simple closed curves in hyperbolic surfaces,  by which we mean complete, finite-area metrics with constant Gaussian curvature -1 and geodesic
boundary. For this matter, there are small differences between the orientable and the non-orientable case. If $S=F_{g,r}$ or  $N_{g,r}$ is equipped with a hyperbolic structure, then  any simple closed curve $c$ on $S$ has a unique geodesic representative which is simple except in the case where $c$ is 2-sided non-primitive; in that case, its geodesic representative is not embedded in $S$, but it traverses two times the geodesic representative of the core curve of the M\"obius band that $c$ bounds. In the orientable as well as in the non-orientable case, the geodesic representatives of two disjoint simple closed curves are themselves disjoint except for the special case that we just described. More generally, if two simple closed curves $c_1$ and $c_2$ on $S$ are such that there is no disk embedded in $S$ whose boundary consists of the union of an arc contained in $c_1$ with an arc contained in $c_2$, and if $c'_1$ and $c'_2$ are the geodesics in the homotopy classes of $c_1$ and $c_2$, respectively, then the union $c_1\cup c_2$ is isotopic to the union $c'_1\cup c'_2$. In particular, the intersection number $i(c_1,c_2)$ is equal to the number of points of intersection between the geodesic curves $c_1'$ and $c_2'$.

A fundamental result in low-dimensional topology is the {\it Hatcher-Thurston Theorem} \cite{HT} which uses Cerf theory to prove
that any two pants decompositions of an orientable surface are related by a finite sequence of the
the first two {\it elementary moves} illustrated in Figure \ref{fig:elementary}  (Moves I and II).
Our generalization Theorem \ref{thm:ght} that includes the orientable case is:

\begin{theorem}[Generalized Hatcher-Thurston]
Any two pants decompositions of a possibly non-orientable and possibly bordered compact surface are related by a finite sequence of the {\sl elementary            moves} (Moves I to IV)
illustrated in Figure~\ref{fig:elementary}. 
\end{theorem}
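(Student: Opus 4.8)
The plan is to reduce the non-orientable case to the orientable Hatcher-Thurston Theorem \cite{HT} by an orientation-double-cover argument combined with a careful local analysis near the 1-sided curves and the Klein-bottle-minus-a-disk pieces that are the real novelty. First I would fix notation: given a possibly non-orientable bordered surface $S=N_{g,r}$, let $\pi\colon\widetilde S\to S$ be the orientation double cover, with deck involution $\sigma$, so that $\widetilde S$ is an orientable bordered surface. The key observation is that a pants decomposition ${\mathcal P}$ of $S$ pulls back to a $\sigma$-invariant collection of curves in $\widetilde S$; a 2-sided curve $c\in{\mathcal P}_2$ lifts to two disjoint curves interchanged by $\sigma$ (or to a single $\sigma$-invariant curve, but this does not occur for pants curves since their neighbourhoods are annuli that are orientable), while a 1-sided curve $\gamma\in{\mathcal P}_1$ lifts to a single $\sigma$-invariant curve $\widetilde\gamma$ which double covers $\gamma$. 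The complement $\widetilde S-\pi^{-1}({\mathcal P})$ consists of pairs of pants, so $\pi^{-1}({\mathcal P})$ is \emph{almost} a pants decomposition of $\widetilde S$; the only defect is that a Möbius-band neighbourhood of $\gamma\in{\mathcal P}_1$ lifts to an annular neighbourhood of $\widetilde\gamma$, so one must add $\widetilde\gamma$ itself — but $\widetilde\gamma$ already appears. In fact the cleanest setup is to work \emph{equivariantly}: consider pants decompositions of $\widetilde S$ that are invariant under $\sigma$ and that descend to pants decompositions of $S$, i.e.\ whose 1-sided quotient curves we remember.

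The heart of the proof is then an equivariant version of Hatcher-Thurston. I would argue that, given two pants decompositions ${\mathcal P},{\mathcal P}'$ of $S$, their lifts $\widetilde{\mathcal P},\widetilde{\mathcal P}'$ are pants decompositions of $\widetilde S$ (after the cosmetic adjustment at 1-sided curves described above), hence related by a finite sequence of Moves I and II in $\widetilde S$; the task is to realize this sequence $\sigma$-equivariantly and then to project. An equivariant path of moves in $\widetilde S$ descends to a sequence of moves in $S$ in which: a pair of $\sigma$-conjugate Move I's or Move II's performed on 2-sided curves downstairs becomes a single Move I or II downstairs; a $\sigma$-invariant move supported in a neighbourhood of some $\widetilde\gamma$ becomes one of the new moves III or IV downstairs. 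Concretely, Move III (replacing one 1-sided curve by another within a fixed complementary piece) and Move IV (trading two 1-sided curves for one 2-sided curve) are exactly the projections of the two ways that an equivariant elementary move can interact with the fixed-point set structure of $\sigma$ — this is where I expect the analysis of \S5 (pants decompositions of the Klein bottle minus a disk) to be used, since the Klein bottle minus a disk is precisely the support of a $Y$-homeomorphism and is the smallest non-orientable piece in which Moves III and IV genuinely live. I would prove that any two pants decompositions of $N_{1,1}$ (Möbius band, trivial), $N_{1,2}$, $N_{2,0}$, $N_{2,1}$, and $N_{1,3}$-type small surfaces are related by Moves I-IV directly, by enumeration, to seed the induction.

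The inductive/reduction step: having handled the small pieces, I would run the standard Hatcher-Thurston inductive scheme — complexity induction on $\#{\mathcal P}$ plus a connectivity argument for the ``pants complex'' — but with the equivariance constraint threaded through. The main obstacle, and the step that requires genuine care rather than citation, is the \emph{equivariant realizability of the Cerf-theoretic path}: Hatcher-Thurston's original proof goes through a sweepout/Morse-theory argument on the space of pants decompositions, and one must check that when the surface carries a free-except-on-$\widetilde\gamma$ involution $\sigma$, the connectivity argument can be carried out $\sigma$-equivariantly, i.e.\ that the relevant complex of invariant configurations is still connected. I would handle this by the alternative, more combinatorial route available once the Fenchel-Nielsen coordinates of \S3 are in hand: embed both pants decompositions into a common ``maximal'' configuration by a sequence of moves, using that any 1-sided curve can be moved (via Move III) to a standard position and any configuration of 1-sided curves can be partially converted to 2-sided ones (via Move IV), thereby reducing to the case ${\mathcal P}_1={\mathcal P}'_1=\emptyset$ or a fixed standard ${\mathcal P}_1$, at which point the surface cut along the 1-sided curves is a (possibly disconnected) orientable surface and the classical theorem applies verbatim. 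The only remaining point is to check that changing which 1-sided curves one uses — realized by Moves III and IV and their inverses — suffices to connect any two choices of $\{{\mathcal P}_1\}$; this again reduces to the Klein-bottle-minus-a-disk analysis of \S5, so that is where I would expect to spend the most effort.
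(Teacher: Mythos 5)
Your orientation double cover strategy is a genuinely different route from the paper's. The paper argues directly by a double induction: first on the Euler characteristic of $S$, and then, within a fixed Euler characteristic, on a combinatorial complexity $\gamma({\mathcal P},{\mathcal P}')$ built from geometric intersection numbers between the 1-sided curves of one decomposition and the curves of the other, having first normalized both decompositions (Lemmas \ref{lem:opd} and \ref{lem:1or2}) to be ``orientable'' with $\#{\mathcal P}_1=\#{\mathcal P}'_1\in\{1,2\}$. The inductive step is a ``petal'' surgery: given $c\in{\mathcal P}_1$ and $c'\in{\mathcal P}'_1$ with $i(c,c')>0$, one closes off a subarc of $c'$ between two consecutive intersections with $c$ along a short piece of $c$ to form a new 1-sided curve $c''$ with strictly smaller intersection with ${\mathcal P}'$, and then uses the Euler-characteristic induction (cutting along a common curve) to show ${\mathcal P}$ can reach a decomposition containing $c''$. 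Your proposal has no analogue of this step, and that is where the gap lies.

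``Plan A'' requires an \emph{equivariant} Hatcher--Thurston theorem --- connectivity of $\sigma$-invariant pants decompositions of $\widetilde S$ by a $\sigma$-equivariant sequence of Moves I and II --- which, as you yourself note, is not available by citation; you do not supply a proof, and it is not obviously easier than the original problem. More seriously, ``Plan B'' is circular at its central step: the assertion that ``any 1-sided curve can be moved (via Move III) to a standard position'' cannot be taken as given. Move III is a local move supported in a Klein bottle minus a disk $K_1$ determined by the ambient pants decomposition, and by Lemma \ref{lem:cde} it can only exchange a 1-sided pants curve for the unique other 1-sided curve disjoint from a fixed companion. Carrying a 1-sided curve $c$ with $i(c,c_0)$ arbitrarily large to a prescribed $c_0$ by a sequence of Moves I--IV, with the rest of the decomposition changing along the way, is precisely the content of the theorem in the case $\#{\mathcal P}_1=1$; assuming it as a lemma assumes what is to be shown. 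Your identification of $K_1$ as the crucial local model and your list of small base cases are on target, but the inductive engine that reconciles two decompositions whose 1-sided curves intersect nontrivially is missing.
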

 
The Fenchel-Nielsen and Dehn-Thurston twisting numbers of curves in a pants decomposition ${\mathcal P}$ of $S$ that we employ in Theorems \ref{thm:noFN} and \ref{thm:noDT} below require us, as in the orientable case, to make choices, namely, we shall choose an orientation on each pants curve, together with a distinguished point on that curve, and for each pair of pants in the decomposition, we shall associate to each boundary component (equipped with its orientation and its distinguished point) a homotopy class of arcs joining the distinguished point on that component to the distinguished point of another boundary component of the pair of pants. The definition of the Fenchel-Nielsen and Dehn-Thurston twisting numbers rely on these data. In the case of an oriented surface, we get a new way of measuring twisting numbers which does not coincide with the classical one (e.g., the one explained in \cite{Thurston}), but of course it works as well\footnote{
In fact, Wolpert's formula for the Weil-Petersson K\"ahler two-form in the 
orientable case also applies here with the new twists according to \cite{FN2}.} for providing twist parameters.

In particular, each boundary component of $\partial S$ (if any) of a possibly non-orientable surface $S$ comes decorated with a distinguished point.
If this boundary $\partial S$ is comprised of 2-sided curves $\partial S=\partial_1\sqcup \cdots \sqcup \partial_m$, equipped with a collection of  {\it distinguished points} $p_j\in \partial _j$, one in each component of the boundary, for $j=1,\ldots, m$, then we let
$\vec p$ denote the collection of these points indexed by components $j=1,\ldots ,m$ of the boundary.  

To study hyperbolic structures and measured foliations on surfaces $S$ with boundary and the mapping class group actions on spaces of equivalence classes of such structures,
we must make certain assumptions about isotopies of homeomorphisms of $S$. One thing that distinguishes {\sl bordered} surfaces is that the isotopies that define the equivalence relations between hyperbolic metrics and between measured foliations, as well as the homeomorphisms that represent mapping classes, must fix the distinguished points $\cup \vec p$, either setwise in general or pointwise in the ``pure'' case.
In particular, a Dehn twist along a simple closed curve parallel to a boundary component of the surface will not be considered as isotopic to the identity. This will be particularly useful when the surface is divided into pairs of pants where separate structures in individual pairs of pants are glued together.

Suppose that $\rho$ is a hyperbolic metric on $S$. Each free homotopy class of closed curve admits a unique $\rho$-geodesic, except that, as we already noted, in the non-orientable case,
the geodesic representative of an embedded non-primitive curve may fail to be embedded. 
In particular, any isotopy class of closed curve has its $\rho$-geodesic length.

Following  \cite{Penner-bordered}, the {\it Teichm\"uller space} ${\mathcal T}(S)$ of a possibly bordered and possibly non-orientable surface $S=F_{g,r}$ or $S=N_{g,r}$ is the quotient space
of all pairs $(\rho,\vec p)$, where $\rho$ is a hyperbolic metric on $S$  and $\vec p$ is the indexed collection of distinguished points, one in each oriented boundary component.
The quotient is taken by simultaneous push-forward of metric and distinguished points under  diffeomorphisms $f:S\to S$  isotopic to the identity, i.e.,
$(\rho,\vec p)~\sim~(f_*(\rho),f(\vec p))$, where the isotopy of $f$ to the identity must fix the boundary pointwise.

The classical Fenchel-Nielsen Theorem \cite{FN,FN2} extended to bordered surfaces parametrizes the Teichm\"uller space ${\mathcal T}(F)$ of an orientable surface $F=F_{g,r}$ by assigning to each curve $\{ c_j\} _{j=1}^{3g-3+2r}$ in a pants decomposition ${\mathcal P}={\mathcal P}_2$ of $F$ the $\rho$-length $\ell _j(\rho)>0$ of $c_j$, where $\rho$ is a hyperbolic metric on $S$, as well as another
so-called twisting number $\theta_j(\rho)\in{\mathbb R}$, whose standard definition \cite{FN,FN2} is modified in the next section \ref{sec:gfn}.
Our generalization that includes the orientable case is:

\begin{theorem}[Generalized Fenchel-Nielsen] \label{thm:noFN}
Fix a pants decomposition ${\mathcal P}={\mathcal P}_1\cup {\mathcal P}_2$ of $S=F_{g,r}$ or $S=N_{g,r}$, where ${\mathcal P}_1=\{b_1,\ldots ,b_M\}$ and
${\mathcal P}_2=\{ c_1,\ldots ,c_N\}$. Given a hyperbolic metric $\rho$ on
$S$, let $\mu_j(\rho)$ denote the $\rho$-length of $b_j$, for $j=1,\ldots ,M$, and $\ell_i(\rho)$ and $\theta_i(\rho)$ as before, respectively, denote $\rho$-lengths and twisting numbers of $c_i$, for $i=1,\ldots ,N$.
Then the mapping
\begin{equation}\label{eq:GFN}
\aligned
{\mathcal T}(S)&\to  {\mathbb R}_{>0}^M\times ({\mathbb R}_{>0}\times {\mathbb R})^N\cr
\rho&\mapsto (\mu_1(\rho),\ldots ,\mu_M(\rho), \ell_1(\rho),\theta_1(\rho),\ldots, \ell_N(\rho),\theta_N(\rho))\cr
\endaligned
\end{equation}
is a real-analytic homemorphism.
\end{theorem}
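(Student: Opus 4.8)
The plan is to run the classical cut-and-reglue construction of Fenchel--Nielsen coordinates in three stages --- geometrize the metric by cutting along geodesic pants curves, parametrize each complementary pair of pants and record the regluing data, then verify bijectivity and analyticity --- the one genuinely new ingredient being the behaviour of a $1$-sided pants curve, which will turn out to carry a length but no twisting number.

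First I would geometrize and cut. Given $\rho\in\mathcal T(S)$, replace each curve of $\mathcal P$ by its $\rho$-geodesic representative; since, by the facts recalled in Section~\ref{s:background}, no curve of a pants decomposition is $2$-sided non-primitive, these geodesics are embedded and pairwise disjoint, and cutting $S$ along $\bigcup\mathcal P$ yields a disjoint union $\widehat S=\bigsqcup_\alpha P_\alpha$ of $k=-\chi(S)=g+r-2$ hyperbolic pairs of pants with geodesic boundary. A $2$-sided pants curve contributes one boundary circle of $\widehat S$ if it lies on $\partial S$ and an ordered pair of boundary circles, necessarily of equal $\rho$-length $\ell_i(\rho)$, if it is interior; a $1$-sided curve $b_j$ contributes a single boundary circle $\widehat b_j$ that double covers $b_j$, so $\mathrm{length}_\rho(\widehat b_j)=2\mu_j(\rho)$ --- concretely, a $\rho$-collar of $b_j$ is isometric to $\{(s,t):|s|<\varepsilon,\ t\in\mathbb R/2\mu_j\mathbb Z\}$ with metric $ds^2+\cosh^2\!s\,dt^2$ modulo the free isometric involution $(s,t)\sim(-s,\,t+\mu_j)$, and $\widehat b_j$ is the circle $\{s=0\}$ of this double collar.

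Next I would invoke the building block and describe the regluing. The Teichm\"uller space of one pair of pants with geodesic boundary is parametrized real-analytically by its three boundary lengths, $\mathcal T(P)\xrightarrow{\ \sim\ }\mathbb R^3_{>0}$ (cut $P$ along its seams into two isometric right-angled hexagons, each determined by three alternating side lengths), and a boundary circle of $P$ lying on $\partial S$ carries in addition a distinguished point contributing one further real parameter measured against the seams. A hyperbolic metric on $S$ inducing $\mathcal P$ is then recovered from a choice of point in each $\mathcal T(P_\alpha)$ together with isometric identifications along the interior curves, subject to the following. (i) Along an interior $2$-sided curve $c_i$ the two glued boundary lengths must agree ($=\ell_i$), and the gluing isometry is free up to rotation, hence pinned down by a real twist $\theta_i$ once the orientation, distinguished point and reference arcs of Section~\ref{sec:gfn} are fixed. (ii) Near a $1$-sided curve $b_j$ the metric must be the quotient of a symmetric geodesic collar by a fixed-point-free isometric involution of the circle $\widehat b_j\cong\mathbb R/L\mathbb Z$; the only such involution is the half-translation $t\mapsto t+L/2$, so the self-gluing is rigid, contributes \emph{no} twist parameter, and forces $L=2\mu_j$ with $\mu_j\in\mathbb R_{>0}$ arbitrary. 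Tallying parameters, the $P_\alpha$ furnish $\mathbb R^{3k}_{>0}\times\mathbb R^{r}$; identifying lengths at interior $2$-sided curves, imposing $L=2\mu$ at the $1$-sided curves, and adjoining one twist per interior $2$-sided curve collapses this exactly to $\mathbb R^{M+N}_{>0}\times\mathbb R^{N}$, of dimension $3g-6+4r=\dim\mathcal T(S)$, i.e.\ the target of \eqref{eq:GFN}.

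Finally I would check that the map is a bijection and analytic. Well-definedness of $\rho\mapsto(\mu_\bullet,\ell_\bullet,\theta_\bullet)$ is uniqueness of geodesic representatives; injectivity follows because equality of coordinates yields an isometry $P_\alpha\to P'_\alpha$ for each $\alpha$ and the \emph{rigidity} of the gluings (the $\theta_i$ determining the $2$-sided identifications and there being no choice whatsoever at the $1$-sided curves) lets these patch, after the usual correction by boundary rotations, into a diffeomorphism $S\to S$ isotopic to the identity rel $\partial S$; surjectivity follows by assembling pants with the prescribed boundary lengths, gluing along interior $2$-sided curves with the prescribed twists, and performing the half-translation self-gluing at each $1$-sided curve with boundary length $2\mu_j$. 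Real-analyticity of the map comes from real-analytic dependence of geodesic lengths and twist functions on $\rho$ in the real-analytic structure of $\mathcal T(S)$ from \cite{Penner-bordered}, and of the inverse from the analyticity of the pair-of-pants step and of the glued metric in the gluing data; a real-analytic bijection with real-analytic inverse is a real-analytic homeomorphism. The hard part, and the only essential departure from the orientable proof, is step (ii): establishing the local normal form for a hyperbolic metric near a $1$-sided geodesic and classifying the fixed-point-free isometric involutions of a circle, so as to see simultaneously that a $1$-sided curve carries a length but no twisting number and that nothing is lost when cutting and regluing. A secondary bookkeeping matter --- handled as in the orientable case once (ii) is in hand --- is the treatment of distinguished points and of the twist convention along $\partial S$-parallel pants curves, together with the degenerate configurations where two or three ends of a single pair of pants are among the interior curves (including a pair of pants with two $1$-sided ends).
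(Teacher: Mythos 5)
Your proof follows the same cut-and-reglue strategy as the paper: cut along geodesic pants curves, parametrize each hyperbolic pair of pants by its three boundary lengths, glue back with a twist parameter along each $2$-sided curve, and use the unique free orientation-reversing isometric involution (the half-translation) to self-glue at each $1$-sided curve, so that $1$-sided curves carry only a length parameter. The one place you go beyond the paper's exposition is in explicitly writing the Fermi-coordinate normal form of the Möbius collar and classifying the admissible involutions, which the paper states more briefly as identifying diametrically opposite points; this is a correct elaboration, not a different route.
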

 
 \begin{corollary} The moduli space ${\mathcal M}(N)={\mathcal T}(N)/MC(N)$ of a non-orientable surface $N=N_{g,r}$ is itself non-orientable
provided $g\geq 2$ and $2-g-r<0$.
\end{corollary}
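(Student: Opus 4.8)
The plan is to exhibit an explicit orientation-reversing self-homeomorphism of ${\mathcal T}(N)$ lying in the mapping class group $MC(N)$, so that the quotient ${\mathcal M}(N)={\mathcal T}(N)/MC(N)$ cannot be orientable. Since Theorem~\ref{thm:noFN} identifies ${\mathcal T}(N)$ real-analytically with an open cell ${\mathbb R}_{>0}^M\times({\mathbb R}_{>0}\times{\mathbb R})^N$, it is in particular an orientable manifold, so the issue is only the local behavior of the $MC(N)$-action. The natural candidate for the offending mapping class is a Y-homeomorphism $Y=Y_{\gamma,c}$, or more precisely its relationship to the Dehn twist: recall from the discussion after Figure~\ref{fig:Yhomeo} that the two Y-homeomorphisms differ by interchanging the two cross caps and that $Y^2$ is a Dehn twist along $\partial K_1$. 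Interchanging the two cross caps in a Klein bottle minus a disk, keeping the boundary fixed, is the local model we want: it should act on the corresponding Fenchel-Nielsen factor by negating the twist parameter of the relevant 2-sided curve while fixing everything else, hence with Jacobian determinant $-1$.

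The key steps, in order, would be: (1) Using $g\geq 2$ and $2-g-r<0$, produce a pants decomposition ${\mathcal P}={\mathcal P}_1\cup{\mathcal P}_2$ of $N=N_{g,r}$ containing two 1-sided curves $b_1,b_2$ together with a 2-sided curve, say $c_1$, such that $b_1\cup b_2\cup c_1$ lies in a subsurface $K\subseteq N$ homeomorphic to a Klein bottle minus a disk (this uses the freedom, noted in Section~\ref{s:background}, to arrange at least two 1-sided curves when $g\geq 2$, and the analysis of the Klein bottle minus a disk promised in Section~5). (2) Let $\varphi\in MC(N)$ be supported in $K$, equal to the cross-cap interchange there and the identity outside. (3) Compute the induced map $\varphi_*$ on ${\mathcal T}(N)$ in the Fenchel-Nielsen coordinates of Theorem~\ref{thm:noFN}: the lengths $\mu_1,\mu_2$ of $b_1,b_2$ are exchanged, $\ell_1$ is preserved, the twist $\theta_1$ of $c_1$ is sent to $-\theta_1$ (the cross-cap interchange reverses the sense of twisting relative to the chosen normal direction), and all remaining coordinates are fixed. (4) Conclude that $\varphi_*$ has Jacobian $-1$ at any fixed point — e.g. at a symmetric hyperbolic structure with $\mu_1=\mu_2$ and $\theta_1=0$, which exists by surjectivity in Theorem~\ref{thm:noFN} — so that the action of $MC(N)$ on the orientable manifold ${\mathcal T}(N)$ is not orientation-preserving, whence the quotient orbifold, and so the underlying moduli space ${\mathcal M}(N)$, is non-orientable.

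There is a standard subtlety to address in step~(4): having a non-orientation-preserving group action on an orientable manifold does not by itself force the quotient to be non-orientable (one must rule out that every element reversing orientation has no fixed points, or more precisely that the quotient is still orientable as an orbifold). The clean way around this is to work at the fixed locus: the cross-cap interchange $\varphi$ has order two and the symmetric structure above is a fixed point, so on a $\varphi$-invariant chart near it the quotient is modeled on ${\mathbb R}^{\dim}/\langle d\varphi_*\rangle$ with $d\varphi_*$ a reflection, which is a half-space-type quotient that is not an orientable manifold near the image point; alternatively one invokes that an orbifold chart of the form ${\mathbb R}^n/(\text{orientation-reversing involution})$ is non-orientable. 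The main obstacle, and the place where the argument needs genuine care rather than formula-pushing, is verifying the precise effect of the cross-cap interchange on the twist coordinate $\theta_1$ — i.e. that it is exactly $\theta_1\mapsto-\theta_1$ and not some more complicated shear — which is exactly the kind of detailed bookkeeping with orientations of pants curves and distinguished points that Section~\ref{sec:gfn} sets up, and which the Klein-bottle-minus-a-disk analysis of Section~5 is designed to make routine.
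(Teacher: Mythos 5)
Your plan — produce an explicit element of $MC(N)$ acting on the real-analytic cell ${\mathcal T}(N)$ from Theorem~\ref{thm:noFN} with Jacobian $-1$ — is exactly the paper's plan, but the specific mapping class you pick and the determinant you compute from it do not match up, and this is a genuine gap. If, as you assert in step~(3), the cross-cap interchange acts by $(\mu_1,\mu_2)\mapsto(\mu_2,\mu_1)$, fixes $\ell_1$, and sends $\theta_1\mapsto-\theta_1$, then its differential is the direct sum of a transposition (determinant $-1$) and a one-dimensional reflection (determinant $-1$), so the Jacobian is $(+1)$, not $(-1)$ as you claim in step~(4). In other words, by your own computation the cross-cap interchange would be \emph{orientation-preserving} on ${\mathcal T}(N)$, and the argument collapses. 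So you must either establish that the twist coordinate is in fact preserved (or shifted by a translation) rather than negated — in which case the Jacobian really is $-1$ — or choose a different mapping class. You do flag that pinning down the action on $\theta_1$ is the delicate point, but you do not notice that the sign you propose is exactly the one that kills your argument.

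The paper sidesteps this by using the full Y-homeomorphism $Y$ supported in a Klein bottle minus a disk $K_1$ bounded by a $2$-sided pants curve $c_3$ rather than the bare cross-cap interchange. Since $Y^2$ is the Dehn twist along $\partial K_1=c_3$, which adds $\ell_3$ to $\theta_3$, the action of $Y$ on $\theta_3$ is forced to be the \emph{shear} $\theta_3\mapsto\theta_3+\tfrac12\ell_3$ (an affine map with linear part $+1$); together with the swap $\mu_1\leftrightarrow\mu_2$ and the identity on all remaining coordinates this has determinant $-1$ with no sign-bookkeeping to fight over. This is cleaner precisely because the relation $Y^2=T_{\partial K_1}$ pins down the linear part of the twist action. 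Your closing remarks about orbifold orientability at a fixed point are fine but are more machinery than the paper needs: once one exhibits an element of $MC(N)$ that visibly reverses orientation on the contractible manifold ${\mathcal T}(N)$, the conclusion that the quotient is non-orientable is taken as immediate.

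Finally, as a minor point, the paper's construction of the subsurface is slightly different in presentation from yours (it takes a regular neighborhood of two disjoint $1$-sided curves joined by an arc, rather than appealing to the existence of a suitable pants decomposition), but that difference is cosmetic; the substantive issue is the choice of Y-homeomorphism versus cross-cap interchange and the attendant sign.
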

\begin{proof} 
Choose two disjoint 1-sided curves $c_1,c_2$ and an embedded arc $\alpha$ connecting them and let $c_3$ be the 2-sided curve
isotopic to the boundary of a regular neighborhood of $c_1\cup c_2\cup\alpha$.  Extend $c_1,c_2,c_3$ to a pants decomposition of $N$.
Thus, $c_3$ bounds a M\"obius band minus a disk
containing $c_1,c_2$ and supports the Y-homeomorphism $Y_{c_3}$, whose effect on generalized Fenchel-Nielsen coordinates 
is to interchange the length coordinates of $c_1$ and $c_2$ and to add to the twist coordinate of $c_3$ half its length coordinate
leaving all other generalized Fenchel-Nielsen coordinates unchanged.  This transformation evidently reverses the orientation
of ${\mathcal T}(N)$ as required.
\end{proof}

We define ${\mathcal D}={\mathcal D}(S)$ to be the collection of all homotopy classes of  1-submanifolds properly embedded in $S-\cup\vec p$,
no curve component of which is inessential and no arc component of which is boundary parallel, modulo ambient isotopy in $S$ pointwise fixing $\cup\vec p$.  Let $\Pi{\mathcal D}=\Pi{\mathcal D}(S)\subseteq {\mathcal D}(S)$ denote the subset of homotopy classes of primitive elements.  

We denote by $\mathcal{C}$ (respectively, $\Pi \mathcal{C}$) the subset of homotopy classes of connected elements of $\mathcal{D}$ (respectively, $\Pi \mathcal{D}$). The elements of  $\mathcal{C}$ are homotopy classes of simple closed curves or proper arcs. 
Finally, we denote by $\mathcal{S}$ (respectively, $\Pi \mathcal{S}$) the subset of $\mathcal{C}$ (respectively, $\Pi \mathcal{C}$) of elements which consists of simple closed curves.

There is an inclusion map
\begin{equation} \label{eq:inclusion1}
{\mathcal T}(S)\hookrightarrow \mathbb{R}_{>0}^{\Pi{\mathcal C}}
\end{equation}
sending each hyperbolic structure to the map $\Pi{\mathcal C}\to \mathbb{R}_{>0}$ which assigns to each homotopy class of simple closed curve or arc the length of its geodesic representative.
As in the case of orientable surfaces, the topology of ${\mathcal T}(S)$ can be induced from that of the space $\mathbb{R}_{>0}^{\Pi{\mathcal C}}$ via the inclusion map (\ref{eq:inclusion1}). By passing to the homothetic quotient of $\mathbb{R}_{>0}^{\Pi{\mathcal C}}$ by multiplication by positive reals, we get a map into the projective space:
\begin{equation} \label{eq:inclusion2}
{\mathcal T}(S)\hookrightarrow P\mathbb{R}_{>0}^{\Pi{\mathcal C}}.
\end{equation}

More precisely, we have the following:
\begin{theorem}\label{th:inclusion1}
 The inclusion map (\ref{eq:inclusion2}) is a homeomorphism onto its image.
\end{theorem}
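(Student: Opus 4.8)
The plan is to deduce this from the fact recalled just above — itself a consequence of the generalized Fenchel--Nielsen Theorem~\ref{thm:noFN} — that the length map (\ref{eq:inclusion1}) is a homeomorphism of $\mathcal T(S)$ onto its image $X\subseteq\mathbb R_{>0}^{\Pi\mathcal C}$, together with the remark that $X$ is closed: if the lengths of a sequence of hyperbolic structures converge coordinatewise to finite positive limits, then in particular their lengths stay bounded and bounded away from $0$ on a fixed finite filling subfamily of $\Pi\mathcal C$, so by Mumford compactness the structures remain in a compact part of $\mathcal T(S)$ and the coordinatewise limit again lies in $X$. Writing $\pi\colon\mathbb R_{>0}^{\Pi\mathcal C}\setminus\{0\}\to P\mathbb R_{>0}^{\Pi\mathcal C}$ for the projectivization, the map (\ref{eq:inclusion2}) is $\pi|_X$ under the identification $\mathcal T(S)\cong X$, so it remains to show $\pi|_X$ is injective and has continuous inverse.

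For injectivity, suppose $\rho,\rho'\in\mathcal T(S)$ and $\lambda>0$ satisfy $\ell_{\rho'}=\lambda\,\ell_\rho$ on $\Pi\mathcal C$; I would show $\lambda=1$, so that $\rho=\rho'$ by injectivity of (\ref{eq:inclusion1}). The key point is that hyperbolic length spectra obey non-homogeneous relations. Pick simple closed curves $c,d$ with $i(c,d)=1$, lying in a one-holed torus or one-holed Klein bottle subsurface of $S$ (for the few surfaces of Euler characteristic $-1$ admitting no such pair, use instead a pair of pants and the right-angled hexagon identity, or its crosscapped version, which expresses a seam length as a non-homogeneous function of the cuff lengths). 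The Fricke trace identity, in length form,
\[ 2\cosh\tfrac{\ell(c)}{2}\,\cosh\tfrac{\ell(d)}{2}=\cosh\tfrac{\ell(cd)}{2}+\cosh\tfrac{\ell(cd^{-1})}{2}, \]
holds for every hyperbolic structure; writing it for $\rho$ and for $\rho'$ and substituting $\ell_{\rho'}=\lambda\,\ell_\rho$ gives one analytic equation in $\lambda$ satisfied at $\lambda=1$. Because the axes of the holonomies of $c$ and $d$ cross, they are not asymptotic, so $\ell_\rho(cd^{\pm1})<\ell_\rho(c)+\ell_\rho(d)$ strictly; hence, writing each side of the identity as a sum of hyperbolic cosines, the exponent $\tfrac12(\ell_\rho(c)+\ell_\rho(d))$ strictly dominates the others, and — after using one more such identity to exclude a spurious second root — this forces $\lambda=1$.

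For continuity of the inverse, suppose $\rho_n\in\mathcal T(S)$ with $\pi(\ell_{\rho_n})\to\pi(\ell_\rho)$, and choose $\lambda_n>0$ with $\lambda_n\,\ell_{\rho_n}\to\ell_\rho$ coordinatewise; I would show $\rho_n\to\rho$. The crux is that $\lambda_n$ stays bounded away from $0$ and $\infty$. If $\lambda_n\to\infty$ then $\ell_{\rho_n}(\alpha)\to0$ for all $\alpha$, which is impossible since, by the collar lemma, two elements of $\Pi\mathcal C$ with $i(\alpha,\beta)\geq1$ cannot both shrink (and such a pair always exists, using two properly embedded arcs when $S$ is a pair of pants). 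If $\lambda_n\to0$ then $\ell_{\rho_n}(\alpha)\to\infty$ for all $\alpha$; inserting $c,d,cd,cd^{-1}$ into the Fricke identity, dividing by the dominant exponentials and multiplying through by $\lambda_n$, one obtains in the limit $\max\{\ell_\rho(cd),\ell_\rho(cd^{-1})\}=\ell_\rho(c)+\ell_\rho(d)$, contradicting the strict inequality above. Therefore every subsequence of $(\lambda_n)$ has a further subsequence with $\lambda_{n_k}\to\lambda_\infty\in(0,\infty)$, along which $\ell_{\rho_{n_k}}\to\ell_\rho/\lambda_\infty$ coordinatewise; since $X$ is closed this limit equals $\ell_{\rho'}$ for some $\rho'$, so $\rho_{n_k}\to\rho'$ by (\ref{eq:inclusion1}), whence $\lambda_\infty=1$ and $\rho'=\rho$ by injectivity. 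As every subsequence of $(\rho_n)$ has a sub-subsequence converging to $\rho$, we conclude $\rho_n\to\rho$.

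The scaffolding — closedness of $X$, the collar estimate, the deduction of convergence from control of $\lambda_n$ — is routine and parallels the orientable case. The step I expect to be the main obstacle is the rigidity used twice above: that $\mathcal T(S)$ meets no ray through the origin more than once and does not accumulate radially on itself. This is exactly where the non-homogeneity of hyperbolic trigonometry is essential — the trace identities and right-angled hexagon formulas, together with their analogues for one-sided geodesics and crosscapped pairs of pants — and where it matters that $\Pi\mathcal C$ contains enough simple closed curves, and on bordered surfaces enough essential arcs, to detect it.
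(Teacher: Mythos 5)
The paper's proof of Theorem~\ref{th:inclusion1} is a single sentence: it invokes Theorem~\ref{thm:noFN} and reduces to the orientable case (FLP, Expos\'e~7) by replacing each twist parameter $\theta_i$ along a 2-sided pants curve $c_i$ by the lengths of two auxiliary curves crossing $c_i$ inside the adjacent pair(s) of pants. Once every Fenchel--Nielsen coordinate is expressed through a \emph{finite} collection of length functions on $\Pi\mathcal C$, the classical convexity and hexagon-trigonometry arguments of FLP carry over verbatim. Your proposal takes a genuinely different and more self-contained route: you never re-encode the twists; instead you (i) establish closedness of the image $X$ of the non-projectivized map (\ref{eq:inclusion1}) via a Mumford-type properness argument, (ii) prove projective injectivity by exhibiting a non-homogeneous relation among length functions (the Fricke identity $2\cosh\frac{\ell(c)}{2}\cosh\frac{\ell(d)}{2}=\cosh\frac{\ell(cd)}{2}+\cosh\frac{\ell(cd^{-1})}{2}$ on a one-holed torus or Klein bottle, with right-angled hexagon relations for the few small cases), and (iii) obtain continuity of the inverse by a bounded-subsequence argument. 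Both routes ultimately hinge on the same phenomenon --- the non-homogeneity of hyperbolic trigonometry --- but you package it as a global rigidity statement for the marked length spectrum rather than as a local statement in pants coordinates. Your route buys a cleaner conceptual picture and avoids choosing auxiliary curves; the paper's route buys brevity and lets the bulk of the work ride on the already-established orientable case, which matters here because the paper relies on an analogous FN~$\to$~FLP reduction for Theorem~\ref{th:inclusion2} as well.

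One step in your argument deserves more care than the phrase ``after using one more such identity to exclude a spurious second root.'' Setting $F(\lambda)=2\cosh(\lambda a)\cosh(\lambda b)-\cosh(\lambda e)-\cosh(\lambda f)$ with $a=\ell_\rho(c)/2$, etc., one has $F(0)=0$ and $F(1)=0$, so dominance of the exponent $a+b$ only controls the behavior for large $\lambda$; it does not by itself preclude a root in $(0,1)$ or in $(1,\infty)$. To close this you should either (a) use several pairs $(c_k,d_k)$ with the corresponding constants in general position so that $\lambda=1$ is the only common root of all the resulting analytic equations, or (b) retreat to the paper's device: fix a pants decomposition, add the two auxiliary curves per 2-sided pants curve, and invoke the strict convexity of the length of a transversal as a function of twist (FLP, Expos\'e~7), which directly rules out non-trivial rescaling. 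Either repair is routine, but without it the injectivity step is not quite complete. The rest of the scaffolding --- closedness of $X$, the collar-lemma bound on $\lambda_n$, and the subsequence argument for continuity of the inverse --- is sound and parallels the orientable case as you intend, provided you also check that $\Pi\mathcal C$, which includes proper arcs, supplies a filling family in the bordered cases (e.g.\ for a pair of pants, where every element of $\Pi\mathcal C$ not parallel to the boundary is an arc).
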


Theorem \ref{th:inclusion1} follows from Theorem \ref{thm:noFN}, as in the orientable case, by encoding the twist parameter along a 2-sided curve in a pants decomposition by the length parameters of a pair of curves that are contained in the pants (or pant) adjacent to that curve in the surface.

Let ${\mathcal {MF}}(S)$ denote the space of measured foliations of $S$ modulo isotopy rel $\cup\vec p$ and Whitehead moves, cf.\ \cite{FLP}. 
As is well-known in the orientable case $F=F_{g,r}$, we may regard 
${\mathcal S}(F)=\Pi{\mathcal S}(F)\subset {\mathcal {MF}}(F)$
 by ``enlarging'' curves and arcs with counting measure to measured foliations.
In contrast, in the non-orientable case, the core and the boundary of an embedded M\"obius band have the same
enlargements, so it is only the primitive curves that embed for $N=N_{g,r}$, i.e., 
$\Pi{\mathcal S}(N)\subset {\mathcal {MF}}(N)$.

 \begin{theorem}[Dehn-Thurston \cite{Penner-thesis,Harer-Penner}] \label{thm:DT}
Fix a pants decomposition ${\mathcal P}={\mathcal P}_2=\{ c_1,\ldots ,c_N\}$ of $F=F_{g,r}$, where $N=3g-3+2r$. 
Given a measured foliation $\mathcal F$ on $F$, let $m_i(\mathcal F)\geq 0$ denote the $\mathcal F$-transverse measure,
which is also called the {\sl intersection number}, 
of $c_i$, for $i=1,\ldots ,N$.  Further define real
{\sl twisting numbers} $t_i (\mathcal F)\geq 0$, for $i=1,\ldots ,N$,
cf.\ \cite{Penner-thesis,Harer-Penner}, where $t_i(\mathcal F)\geq 0$ if $m_i(\mathcal F)=0$.  We shall recall the definition of these parameters in section \ref{s:Dehn-Thurston} below. Then the mapping
$$\aligned
{\mathcal {MF}}(F)&\to (({\mathbb R}_{>0}\times{\mathbb R})\cup (\{0\} \times {\mathbb R}_{\geq 0}))^N\cr
\mathcal F&\mapsto (m_1(\mathcal F),t_1(\mathcal F),\ldots ,m_N(\mathcal F),t_N(\mathcal F))\cr
\endaligned$$
is a bijection.  
\end{theorem}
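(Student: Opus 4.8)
The plan is to prove the classical Dehn–Thurston parametrization exactly as one does in the orientable case (following \cite{Penner-thesis,Harer-Penner}), since no non-orientability enters here: the surface is $F=F_{g,r}$ and the pants curves $c_i$ are all 2-sided. First I would fix a hyperbolic metric $\rho$ on $F$ and realize every curve in $\mathcal{P}$ by its $\rho$-geodesic, so the pieces $F-\cup\mathcal{P}$ are hyperbolic pairs of pants with geodesic boundary. Given a measured foliation $\mathcal{F}$, I would isotope it (using the results quoted in Section~\ref{s:background} identifying $\mathcal{F}$ with its geodesic lamination data, or directly via the standard normal-form argument) so that it meets each $c_i$ efficiently, i.e.\ with intersection number $i(\mathcal{F},c_i)=m_i(\mathcal{F})$ realized geometrically; the quantity $m_i(\mathcal{F})$ is then well defined independent of the representative, and $t_i(\mathcal{F})$ is defined by the usual algebraic count of how the leaves ``wrap'' around $c_i$, measured against the chosen transversal and distinguished point, with the convention $t_i\ge 0$ when $m_i=0$ because then only the closed-leaf (twisting) contribution survives and its sign is not canonically defined.

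The heart of the argument is the local analysis inside a single pair of pants $P$ with boundary curves $c_{i_1},c_{i_2},c_{i_3}$. The key step is the standard classification of measured foliations of $P$ (equivalently, of integral/rational weight systems, then extended by continuity/density) with prescribed boundary intersection numbers $m_{i_1},m_{i_2},m_{i_3}\ge 0$: such a foliation decomposes into a collection of disjoint bands of leaves connecting the boundary components, and the band widths are determined by the $m$'s via the triangle-type relations (each pair of boundary components is joined by a band whose width is $\tfrac12(m_{i_a}+m_{i_b}-m_{i_c})$ when the triangle inequalities hold, with ``extra'' leaves forced to spiral into a boundary component when they fail). The upshot is that a measured foliation on $P$, up to isotopy fixing $\partial P\cap\cup\vec p$, is \emph{uniquely} determined by $(m_{i_1},m_{i_2},m_{i_3})$ together with, for each boundary component carrying positive measure, no further data — all the twisting is recorded globally at the gluing. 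I would carry this out first for weighted train-track / integral data and then invoke density of rational points and continuity of $i(\cdot,\cdot)$ to pass to all of $\mathcal{MF}(F)$.

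Next I would reassemble: the foliation on $F$ is recovered by specifying the triple on each pair of pants (these are the $m_i$'s, shared between the two pants adjacent to $c_i$, or the one pants if $c_i$ is non-separating within a single piece) and then, along each $c_i$ with $m_i>0$, a real gluing/twisting parameter $t_i\in\mathbb{R}$ telling how the two sides are matched relative to the distinguished point on $c_i$; when $m_i=0$ the curve $c_i$ contributes an annular component of $\mathcal{F}$ foliated by closed leaves parallel to $c_i$, whose transverse measure is exactly $t_i\ge 0$. Injectivity then follows because the pants pieces and the gluing data can be read off canonically from $\mathcal{F}$ (the $m_i$ are intersection numbers, the $t_i$ are reconstructed from how a standard transversal to $c_i$ meets the leaves), and surjectivity follows by the explicit construction just described: given any point of $(({\mathbb R}_{>0}\times{\mathbb R})\cup(\{0\}\times{\mathbb R}_{\ge 0}))^N$, build the band foliation on each pair of pants with the prescribed boundary measures and glue with the prescribed twists, checking the pieces are compatible (the triangle relations are automatically satisfiable because the $m_i$ on a given pants are three independent nonnegative reals). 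Continuity in both directions, hence the bijection being in fact a homeomorphism onto that piecewise-linear model, is the same argument as in \cite{Penner-thesis}.

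The main obstacle I expect is purely bookkeeping rather than conceptual: keeping the twisting parameters $t_i$ and the distinguished points $\vec p$ consistent across gluings and making sure the $m_i=0$ degenerate stratum is handled correctly — in particular that the map is a bijection onto the \emph{non-manifold} target $(({\mathbb R}_{>0}\times{\mathbb R})\cup(\{0\}\times{\mathbb R}_{\ge 0}))^N$ and not onto $\mathbb{R}^{2N}$, which forces the sign convention $t_i\ge 0$ when $m_i=0$ to be imposed exactly where the closed-leaf annulus appears. Everything else is the classical Dehn–Thurston argument, and since $F$ here is orientable we may simply cite \cite{Penner-thesis,Harer-Penner} for the details and record only the statement; the non-orientable refinements enter in the next section, not here.
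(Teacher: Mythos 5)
Your proposal is correct and in its essentials matches the approach the paper takes: Theorem \ref{thm:DT} is cited as background from \cite{Penner-thesis,Harer-Penner} rather than reproved, and the proof of the generalization Theorem \ref{thm:noDT} in Section \ref{s:Dehn-Thurston}, specialized to the orientable case with no 1-sided curves, gives exactly the argument you sketch: put $\mathcal{F}$ in good position relative to $\mathcal{P}$, classify measured foliations on each pair of pants by three boundary parameters (transverse measure when transverse, spiraling-annulus width when tangent), and reassemble with twist data at the gluings. Two small observations. First, the paper proves injectivity directly for measured foliations by exhibiting, for two foliations with distinct coordinates, a simple closed curve on which they have distinct intersection numbers; it does not pass through the integral/rational-approximation-then-density step you propose, though that route is also classical. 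Second, your description of the failure of the triangle inequality is slightly off: when, say, $m_{i_1} > m_{i_2} + m_{i_3}$, the excess leaves at the large boundary component form arcs returning to that same boundary component (cutting off one of the other two), not spiraling leaves; the spiraling/annular picture is precisely the degeneration when a boundary intersection number vanishes, which you do handle correctly later in the reassembly step.
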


The condition $t_i(\mathcal F)\geq 0$ if $m_i(\mathcal F)=0$ is explained when we present the generalized Dehn-Thurston theorem in \S \ref{s:Dehn-Thurston} below.

\begin{theorem}[Generalized Dehn-Thurston] \label{thm:noDT}
Fix a pants decomposition ${\mathcal P}={\mathcal P}_1\cup {\mathcal P}_2$ of $S=F_{g,r}$ or $S=N_{g,r}$, where ${\mathcal P}_1=\{b_1,\ldots ,b_M\}$ and
${\mathcal P}_2=\{ c_1,\ldots ,c_N\}$.  
Given a measured foliation $\mathcal F$ in $F$, define real parameters $n_j(\mathcal F)$ as follows:
if the $\mathcal F$-transverse measure of $b_j$ is positive, then $n_j(\mathcal F)\geq 0$ agrees with this intersection number;
if $\mathcal F$ contains a closed leaf isotopic to $b_j$, then $n_j(\mathcal F)\leq 0$ is taken to be the negative of the maximum width of the
band of leaves foliating a M\"obius band neighborhood of $b_j$,
for $j=1,\ldots ,M$.  Let $m_i(\mathcal F)$ and $t_i(\mathcal F)$ denote the intersection and twisting numbers of $\mathcal F$ on $c_i$, for $i=1,\ldots ,N$ as in Theorem \ref{thm:DT}, and which we recall in \S \ref{s:Dehn-Thurston} below.
Then the mapping
$$
\aligned
{\mathcal{J}: \mathcal {MF}}(S)&\to {\mathbb R}^M\times (({\mathbb R}_{>0}\times{\mathbb R})\cup (\{0\} \times {\mathbb R}_{\geq 0}))^N\cr
\mathcal F&\mapsto (n_1(\mathcal F),\ldots, n_M(\mathcal F),m_1(\mathcal F),t_1(\mathcal F),\ldots ,m_N(\mathcal F),t_N(\mathcal F))\cr
\endaligned
$$
is a bijection.   
\end{theorem}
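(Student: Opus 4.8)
The plan is to reduce the non-orientable Dehn--Thurston theorem to the classical orientable one (Theorem~\ref{thm:DT}) by a cut-and-paste argument along the $1$-sided curves of $\mathcal{P}_1$, mimicking the strategy already used for the generalized Fenchel--Nielsen theorem in \S\ref{sec:gfn}. First I would treat the two coordinate blocks separately. For the $2$-sided curves $\{c_i\}$ the local model is literally the orientable one: a measured foliation restricted to a regular neighborhood of $c_i$, or to the union of the adjacent pairs of pants, is governed by the same normal-form analysis in a pair of pants (each pants carrying one of the finitely many standard partial foliations) as in \cite{Penner-thesis,Harer-Penner}, and the transverse measure $m_i$ together with the twisting number $t_i$ (with the convention $t_i\geq 0$ when $m_i=0$, forced because reversing the sense of twisting in an unmeasured annulus gives an isotopic foliation) parametrizes the gluing. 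So the content that is genuinely new is the behaviour at a $1$-sided curve $b_j$ and the claim that a \emph{single} real parameter $n_j$ suffices there.

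Second, I would establish the $1$-sided local model. Let $b_j\in\mathcal{P}_1$ and let $M_j$ be a M\"obius band neighborhood of $b_j$; cutting $S$ along $b_j$ replaces $M_j$ by an annulus one of whose boundary circles is the double cover $\tilde b_j$ of $b_j$. The key observation is the following trichotomy for the germ of $\mathcal{F}$ near $b_j$, all three cases being distinguished and recorded by one number $n_j\in\mathbb{R}$: (i) $\mathcal{F}$ meets $b_j$ transversely with positive measure $n_j>0$, in which case near $b_j$ the foliation looks (after Whitehead equivalence) like the standard transverse foliation of $M_j$ of that width, and there is no twisting parameter because a M\"obius band admits no nontrivial ``twist'' fixing its boundary --- a half-twist in the annulus double-covering $M_j$ is realized by the deck involution and hence is absorbed; (ii) $\mathcal{F}$ has $b_j$ as a closed leaf, equivalently $\mathcal{F}$ restricted to $M_j$ is a band of closed leaves parallel to $b_j$ of some maximal width $w>0$, recorded as $n_j=-w<0$; (iii) $n_j=0$, the boundary case where the annular band has degenerated. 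I would check that in each case the data in $S$ cut along $\cup\mathcal{P}_1$ is exactly a measured foliation on the cut (orientable) surface $\hat S$ with prescribed transverse measure $\max(n_j,0)$ on the curve $\tilde b_j$, together with, when $n_j\le 0$, an unmeasured closed leaf recording the degenerate band --- and conversely, that such data on $\hat S$ glues back (uniquely, up to Whitehead) to a measured foliation on $S$, the gluing along $\tilde b_j$ being canonical since the deck involution identifies the two sides with no residual freedom.

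Third, with the local models in hand the proof is a bookkeeping assembly. Cut $S$ along $\mathcal{P}_1$ to get an orientable surface (or disjoint union) $\hat S$ on which $\{c_i\}$ together with the curves $\tilde b_j$ (and the old boundary) form a pants decomposition; apply Theorem~\ref{thm:DT} to $\hat S$ to get the bijection onto $(({\mathbb R}_{>0}\times{\mathbb R})\cup(\{0\}\times{\mathbb R}_{\ge0}))^{N+M}$ in the coordinates of the $c_i$'s and the $\tilde b_j$'s; then observe that admissible gluing data for reconstructing $S$ corresponds exactly to forgetting the twisting coordinate at each $\tilde b_j$ and remembering instead the sign datum of (ii)--(iii), i.e.\ collapsing the $\tilde b_j$-block $(({\mathbb R}_{>0}\times{\mathbb R})\cup(\{0\}\times{\mathbb R}_{\ge0}))$ onto $\mathbb{R}$ via $(m,t)\mapsto m$ on the open stratum and $(0,t)\mapsto -t$ on the boundary stratum. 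Injectivity of $\mathcal{J}$ then follows because the cut foliation is determined by its Dehn--Thurston coordinates on $\hat S$ and those are recovered from $n_j,m_i,t_i$; surjectivity follows because any target tuple determines admissible coordinates on $\hat S$, hence a foliation on $\hat S$, which glues back.

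I expect the main obstacle to be item (i) of the trichotomy: carefully verifying that when $b_j$ is transverse to $\mathcal{F}$ there really is \emph{no} twisting freedom, i.e.\ that the two a priori different ways of gluing a transverse band into a M\"obius neighborhood are related by an ambient isotopy of $S$ fixing $\cup\vec p$ (equivalently, that the $Y$-homeomorphism-type half-twist is isotopic to the identity on the M\"obius band rel its boundary, which is the measured-foliation analogue of the Alexander-trick remark in the Introduction about Dehn twists along non-primitive curves). Getting the Whitehead-move normalization near $b_j$ precise enough that ``maximum width of the band'' in case (ii) is well defined --- and continuous/compatible with the rest of the coordinates --- is the other delicate point; everything else is a direct transcription of the orientable argument.
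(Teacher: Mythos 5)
The main gap is in the third step of your proposal: cutting $S$ along $\cup\mathcal{P}_1$ does \emph{not} in general produce an orientable surface $\hat{S}$, so Theorem~\ref{thm:DT} (stated for orientable $F_{g,r}$) cannot be applied as you describe. Indeed, as already noted in \S\ref{s:background}, a non-orientable surface $N_{g,r}$ with $g$ even always admits pants decompositions with $\mathcal{P}_1=\emptyset$; the simplest example is $K_1=N_{2,1}$ with the single 2-sided pants curve $c_2$ of Lemma~\ref{lem:onlyone}. In such cases cutting along $\mathcal{P}_1$ does nothing, $\hat{S}=S$ is still non-orientable, and your reduction is vacuous. Even when $\mathcal{P}_1\neq\emptyset$, the statement that $S-\cup\mathcal{P}_1$ is orientable is precisely the defining property of an \emph{orientable} pants decomposition in the sense preceding Lemma~\ref{lem:opd}, and that lemma is a nontrivial result achieved only up to elementary moves --- it is not automatic for a fixed $\mathcal{P}$, which is what Theorem~\ref{thm:noDT} requires.

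The paper's own proof sidesteps this by never cutting globally along $\mathcal{P}_1$ alone: it instead puts $\mathcal{F}$ in good position with respect to the \emph{entire} decomposition $\mathcal{P}$, invokes the classification of measured foliations on a single pair of pants (always orientable), and treats the gluing curve by curve. Injectivity is then obtained locally, by producing explicit curves whose intersection numbers differ --- notably the curve $\gamma$ of Figure~\ref{gamma} in the 1-sided case --- rather than by appealing to the global orientable theorem on a cut surface. Your local trichotomy at a 1-sided curve $b_j$, including the observation that the antipodal deck identification absorbs any twisting so that a single real parameter $n_j$ suffices, is essentially correct and parallels the paper's Case~1; but the global ``cut along $\mathcal{P}_1$, apply the orientable theorem, then collapse the $\tilde b_j$-block'' scaffolding has to be replaced by a pair-of-pants-local argument before the proof is sound. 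You would also need to address the compatibility constraint that a foliation on $\hat{S}$ descends to $S$ only when its germ along each $\tilde b_j$ is invariant under the antipodal involution, which your ``forgetting the twist'' map does not by itself guarantee.
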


We use the bijection in Theorem \ref{thm:noDT} to define the topology of $\mathcal{MF}$, and to do so we must be more precise on how the two spaces $F_1= {\mathbb R}_{>0}\times{\mathbb R}$ and $F_2= \{0\} \times {\mathbb R}_{\geq 0}$
 in the above set-theoretic union are glued together. For this, we extend the space $F_1= {\mathbb R}_{>0}\times{\mathbb R}$ to $F_1= {\mathbb R}_{\geq 0}\times{\mathbb R}$ by adding a copy of $\mathbb{R}$, and we glue that copy of $\mathbb{R}$ to the first factor of the product $\mathbb{R}\times F_2$ by the identity map; this 
 has the effect that the asymptotes for right and left twisting coincide.
 
 \begin{theorem} [Generalized Thurston Boundary for possibly bordered surfaces]\label{thm;TB}
 Fix a pants decomposition ${\mathcal P}$ of a possibly non-orientable and possibly bordered surface $S$.
 Dehn-Thurston coordinates with respect to ${\mathcal P}$ establish a homeomorphism $\mathcal{MF}(S)\approx {\mathbb R}^{(\#{\mathcal P}_1+2\#{\mathcal P}_2)}$.
 The corresponding projectivized space ${\mathcal{PF}}(S)$ is thus a sphere
 of dimension $\#{\mathcal P}_1+2\#{\mathcal P}_2-1$, which has a natural piecewise-linear structure independent of the pants decomposition
 and provides a boundary $\overline{\mathcal{T}}(S)={\mathcal T}(S)\cup {\mathcal{PF}}(S)$ to the open ball that is Teichm\"uller space ${\mathcal T}(S)$ so as to form a closed ball
 $\overline{\mathcal{T}}(S)$.  The usual
 action of the mapping class group of $S$ on ${\mathcal T}(S)$ extends continuously to $\overline{\mathcal T}(S)$ by its natural action on ${\mathcal{PF}}(S)$.
 \end{theorem}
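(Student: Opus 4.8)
The plan is to read almost everything off from the three main theorems already stated, writing $M=\#\mathcal{P}_1$ and $N=\#\mathcal{P}_2$ as in Theorem~\ref{thm:noDT}, and to defer the one genuinely analytic ingredient --- that $\mathcal{PF}(S)$ really occurs as the frontier of $\mathcal{T}(S)$ --- to the Fathi--Laudenbach--Po\'enaru-type argument sketched in Appendix~\ref{app:a}. First I would unwind the gluing prescription described just above: each factor $\bigl((\mathbb{R}_{>0}\times\mathbb{R})\cup(\{0\}\times\mathbb{R}_{\ge 0})\bigr)$ of the target of the Generalized Dehn--Thurston bijection $\mathcal{J}$ of Theorem~\ref{thm:noDT}, after the prescribed extension and identification, becomes a union of two closed half-planes along their common boundary line and so is homeomorphic to $\mathbb{R}^2$; hence the target of $\mathcal{J}$ is homeomorphic to $\mathbb{R}^{M}\times(\mathbb{R}^2)^{N}=\mathbb{R}^{\#\mathcal{P}_1+2\#\mathcal{P}_2}$, and $\mathcal{J}$ is a homeomorphism by the very definition of the topology on $\mathcal{MF}(S)$. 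Next I would observe that each Dehn--Thurston coordinate $n_j,m_i,t_i$ is homogeneous of degree one for the natural scaling action of $\mathbb{R}_{>0}$ on $\mathcal{MF}(S)$ (intersection numbers, widths of foliated M\"obius bands and twisting numbers all scale by the same factor), so $\mathcal{J}$ conjugates that action to the standard radial dilation of $\mathbb{R}^{\#\mathcal{P}_1+2\#\mathcal{P}_2}$; therefore $\mathcal{PF}(S)=(\mathcal{MF}(S)\smallsetminus\{0\})/\mathbb{R}_{>0}$ is the sphere of dimension $\#\mathcal{P}_1+2\#\mathcal{P}_2-1$, carrying the piecewise-linear structure pulled back by $\mathcal{J}$ from the linear structure of $\mathbb{R}^{\#\mathcal{P}_1+2\#\mathcal{P}_2}$. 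In parallel, the Generalized Fenchel--Nielsen Theorem~\ref{thm:noFN} identifies $\mathcal{T}(S)$ real-analytically with $\mathbb{R}_{>0}^{M}\times(\mathbb{R}_{>0}\times\mathbb{R})^{N}$, an open cell of the same dimension $\#\mathcal{P}_1+2\#\mathcal{P}_2$.

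To see that the piecewise-linear structure on $\mathcal{PF}(S)$ does not depend on $\mathcal{P}$, and to obtain the mapping class group action as a by-product, I would invoke the Generalized Hatcher--Thurston Theorem~\ref{thm:ght}: any two pants decompositions are joined by a finite chain of the elementary Moves~I--IV, and a mapping class carries $\mathcal{P}$ to another pants decomposition, so it suffices to check that each elementary move induces a piecewise-integral-linear self-homeomorphism of $\mathbb{R}^{\#\mathcal{P}_1+2\#\mathcal{P}_2}$ in Dehn--Thurston coordinates. For Moves~I and~II this is the classical change-of-coordinates computation of Dehn--Thurston theory, localized in the four-holed sphere or one-holed torus supporting the move, cf.\ \cite{Penner-thesis,Harer-Penner,FLP}. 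For the two new Moves~III and~IV it is a finite (if somewhat lengthy) computation in the Klein bottle minus a disk and the neighbouring small non-orientable surfaces whose pants decompositions are classified in Section~5 below, where the book-keeping must track the one-sided pants curves and the gluing of the two half-plane pieces. Granting this, all coordinate changes are piecewise-linear, the piecewise-linear structure on $\mathcal{PF}(S)$ is intrinsic, and the mapping class group acts on $\mathcal{MF}(S)$ and on $\mathcal{PF}(S)$ by piecewise-linear homeomorphisms.

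For the assertion that $\overline{\mathcal{T}}(S)=\mathcal{T}(S)\cup\mathcal{PF}(S)$ is a closed ball on which the mapping class group acts continuously, I would follow the scheme of \cite{FLP}, whose adaptation to bordered non-orientable surfaces is the subject of Appendix~\ref{app:a}: embed $\mathcal{T}(S)\hookrightarrow\mathbb{R}_{>0}^{\Pi\mathcal{C}}$ by geodesic-length functions as in (\ref{eq:inclusion1}) and $\mathcal{MF}(S)\hookrightarrow\mathbb{R}_{\ge 0}^{\Pi\mathcal{C}}$ by intersection-number functions, projectivize --- so that $\mathcal{T}(S)$ embeds in $P\mathbb{R}_{>0}^{\Pi\mathcal{C}}$ as a homeomorph of its image by Theorem~\ref{th:inclusion1} --- and prove that the closure of $\mathcal{T}(S)$ in $P\mathbb{R}_{\ge 0}^{\Pi\mathcal{C}}$ is precisely the disjoint union $\mathcal{T}(S)\sqcup\mathcal{PF}(S)$: a boundary point arises as the projective limit of a sequence of hyperbolic metrics leaving every compact subset of $\mathcal{T}(S)$ whose suitably normalized length functions converge to the intersection-number function of a projective measured foliation, and every point of $\mathcal{PF}(S)$ is obtained in this way. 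Since $\mathcal{T}(S)$ is an open cell and $\mathcal{PF}(S)$ the sphere one dimension lower, and since the normalized Fenchel--Nielsen/Dehn--Thurston coordinates exhibit a product collar $\mathcal{PF}(S)\times[0,\varepsilon)$ of $\mathcal{PF}(S)$ in $\overline{\mathcal{T}}(S)$, this union is a closed ball; and continuity of the mapping class group action on $\overline{\mathcal{T}}(S)$ is then immediate, since the group acts on the ambient function space $\mathbb{R}_{\ge 0}^{\Pi\mathcal{C}}$ simply by permuting coordinates (it permutes $\Pi\mathcal{C}$), preserving both $\mathcal{T}(S)$ and $\mathcal{PF}(S)$.

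The step I expect to be the main obstacle is this last one --- showing that the Fathi--Laudenbach--Po\'enaru-type compactification really is a closed ball in the non-orientable world. The delicate points, all dealt with in Appendix~\ref{app:a}, are that geodesic representatives of $2$-sided non-primitive curves need not be embedded, that one must work with the primitive curves and arcs $\Pi\mathcal{C}$ rather than all of $\mathcal{C}$ so that the enlargement map $\Pi\mathcal{S}(N)\hookrightarrow\mathcal{MF}(N)$ remains injective, and that one must control the blow-up of geodesic length along divergent sequences in order to rule out degenerate limits and to produce the collar. A secondary, purely combinatorial obstacle is the finite verification that the new Moves~III and~IV act piecewise-linearly on Dehn--Thurston coordinates.
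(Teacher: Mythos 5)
Your proposal is correct and takes essentially the same route as the paper: read the cell structure of $\mathcal{MF}(S)$ off the generalized Dehn--Thurston bijection together with the prescribed fold on each $(m_i,t_i)$-factor, project to get the sphere, reduce piecewise-linear invariance of its structure to piecewise-linearity of the Dehn--Thurston coordinate changes under the four elementary moves via the generalized Hatcher--Thurston Theorem (which is exactly what the paper's appeal to \cite{Penner-thesis} amounts to, with Moves~III and~IV being the new computations), and defer the boundary-compactification and closed-ball assertions to the \cite{FLP}-style argument of Appendix~\ref{app:a}. The only cosmetic imprecision is your description of the glued factor as ``two closed half-planes along a common boundary line'' rather than the fold of a single closed half-plane along its boundary by $(0,t)\sim(0,-t)$; both are homeomorphic to $\mathbb{R}^2$, so the conclusion is unaffected.
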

 
 \begin{proof}
 As just discussed, each 2-sided curve in a pants decomposition accounts for a factor ${\mathbb R}^2$ in $\mathcal{MF}$, and in fact,
 $\mathcal{MF}(S)\approx {\mathbb R}^{(\#{\mathcal P}_1+2\#{\mathcal P}_2)}$  
 since each element of ${\mathcal P}_1$ likewise contributes a factor ${\mathbb R}$ according to the generalized Dehn-Thurston Theorem.
 It thus follows as a corollary that ${\mathcal {PF}}(S)$ is a sphere of the asserted dimension from the generalized Dehn-Thurston Theorem.  The argument that this sphere provides a boundary compactifying ${\mathcal T}(S)$ to a closed ball with the asserted mapping class properties is given in appendix \ref{app:a} and is thus separated from the discussion here only because of its very substantial reliance on the classical theory.  The proof that the piecewise-linear structure of the sphere ${\mathcal {PF}}(S)$  is well-defined is lengthy but follows the same lines  as the corresponding result \cite{Penner-thesis}  for orientable surfaces.
 \end{proof}

In the proof of Theorem \ref{thm;TB},  mimicking the orientable case, we must study the inclusion map
\begin{equation} \label{eq:inclusion3}
{\mathcal {MF}}(S)\hookrightarrow \mathbb{R}_{\geq 0}^{\Pi{\mathcal C}}
\end{equation}
sending each measured foliation to the map $\Pi{\mathcal C}\to \mathbb{R}_{\geq 0}$ which assigns to each homotopy class of simple closed curve or proper arc its intersection number with the given foliation.
As in the case of orientable surfaces, the topology of ${\mathcal {MF}}(S)$ in the non-orientable case can be induced from that of the space $\mathbb{R}_{\geq 0}^{\Pi{\mathcal C}}$ via the inclusion map (\ref{eq:inclusion3}). 

Taking quotients by homothety throughout, we get maps between projective spaces
\begin{equation}\label{eq:inclusion4}
{\mathcal {PF}}(S)\hookrightarrow P\mathbb{R}_{\geq 0}^{\Pi{\mathcal S}}
\end{equation}
and have the following theorem, the analogue for measured foliations of Theorem \ref{th:inclusion1} for hyperbolic structures.
\begin{theorem}\label{th:inclusion2}
 The inclusion map (\ref{eq:inclusion4}) is a homeomorphism onto its image.
\end{theorem}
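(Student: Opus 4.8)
The plan is to mimic the classical argument from \cite{Penner-thesis,FLP} for orientable surfaces, paying attention to the two new features of the non-orientable setting: the presence of 1-sided pants curves (which contribute only a single real parameter, with the sign encoding whether a M\"obius band is foliated by a band of closed leaves) and the proper arcs that enter because the surfaces are bordered. First I would establish injectivity of the composition $\mathcal{MF}(S)\to P\mathbb{R}_{\geq 0}^{\Pi\mathcal{S}}$: if two measured foliations $\mathcal F$ and $\mathcal F'$ have proportional intersection functions on all of $\Pi\mathcal{S}(S)$, then in particular they agree up to a common positive scale on the finitely many curves appearing in a fixed pants decomposition $\mathcal P$ together with the auxiliary curves used in \S\ref{s:Dehn-Thurston} to read off the twisting numbers $t_i$ and, for the 1-sided curves, to read off the width parameters $n_j$ (these auxiliary curves are all \emph{primitive} simple closed curves, so they do lie in $\Pi\mathcal{S}$). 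By the generalized Dehn-Thurston Theorem \ref{thm:noDT}, the full Dehn-Thurston coordinate vector of $\mathcal F$ is then a positive multiple of that of $\mathcal F'$, hence $[\mathcal F]=[\mathcal F']$ in $\mathcal{PF}(S)$; this proves the map is injective.

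Next I would treat continuity. The map $\mathcal{MF}(S)\to \mathbb{R}_{\geq 0}^{\Pi\mathcal{C}}$ of (\ref{eq:inclusion3}) is continuous because for each fixed $\alpha\in\Pi\mathcal{C}$ the intersection number $i(\cdot,\alpha)$ is a continuous function of the Dehn-Thurston coordinates — this is the standard computation expressing $i(\mathcal F,\alpha)$ as a finite piecewise-linear, hence continuous, function of the coordinates of $\mathcal F$ with respect to $\mathcal P$, carried out pair of pants by pair of pants; in the non-orientable case one checks the two model pieces that are new, namely a pair of pants one of whose cuffs is glued to itself along a 1-sided curve, using exactly the local pictures of \S\ref{s:Dehn-Thurston}. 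Since $\mathcal{MF}(S)\approx \mathbb{R}^{\#\mathcal{P}_1+2\#\mathcal{P}_2}$ by Theorem \ref{thm;TB} and (\ref{eq:inclusion3}) is a continuous injection from a manifold, passing to homothety quotients gives that (\ref{eq:inclusion4}) is a continuous injection of the compact(-ified) sphere $\mathcal{PF}(S)$; a continuous injection from a compact space into a Hausdorff space is automatically a homeomorphism onto its image, which is the conclusion. Here I would be a little careful: $\mathcal{PF}(S)$ is compact because it is a topological sphere by Theorem \ref{thm;TB}, so no separate compactness argument is needed beyond invoking that theorem.

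The main obstacle I expect is not injectivity or the abstract topology but verifying that the intersection-number functions $i(\mathcal F,\alpha)$, for $\alpha$ ranging over proper arcs as well as closed curves, are genuinely continuous (indeed piecewise-linear) in the Dehn-Thurston coordinates when the pants decomposition contains 1-sided curves. One must set up the correct normal form for $\mathcal F$ in a neighborhood of a 1-sided pants curve $b_j$ — distinguishing the case $n_j(\mathcal F)>0$, where $b_j$ is transverse to the foliation, from the case $n_j(\mathcal F)\leq 0$, where a band of closed leaves of width $-n_j$ foliates the M\"obius band — and check that the count of intersections of any arc or curve with this local model varies continuously, in particular across the wall $n_j=0$. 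This is the precise point at which the gluing convention introduced just before Theorem \ref{thm;TB} (identifying the two asymptotes so that right and left twisting, here the two sides of the wall $n_j=0$, match up) is used, and it is the only place where something genuinely new relative to \cite{Penner-thesis} must be checked; once this local continuity is in hand, everything else follows the orientable template verbatim.
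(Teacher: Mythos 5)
Your proposal follows the same essential strategy the paper intends: recover the generalized Dehn--Thurston coordinates of a foliation from finitely many intersection numbers (with the pants curves and with auxiliary curves in the adjacent pants that encode the twists along 2-sided curves and the signed widths along 1-sided ones), deduce injectivity from Theorem \ref{thm:noDT}, and then handle continuity. The one tactical difference is the last step: the paper, following the FLP template, would observe that the Dehn--Thurston coordinates are themselves continuous (indeed piecewise-linear) functions of those finitely many intersection numbers, which gives the continuity of the inverse directly; you instead invoke compactness of the sphere ${\mathcal{PF}}(S)$ from Theorem \ref{thm;TB} so that a continuous injection into a Hausdorff target is automatically a homeomorphism onto its image. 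Both work, and your route is a clean shortcut that avoids checking the inverse explicitly (though it leans on the ``sphere'' portion of Theorem \ref{thm;TB}, which is safe since that part follows from Theorem \ref{thm:noDT} alone and does not depend on the present result).

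One caveat deserves flagging: your parenthetical claim that the auxiliary objects needed to read off the twist parameters all lie in $\Pi{\mathcal S}$ (i.e., are simple \emph{closed} curves) is not correct when $c_j$ is a boundary component of $S$, because no essential simple closed curve of $S$ crosses the boundary and so no closed-curve intersection number can detect the twist of a foliation along $\partial S$; one genuinely needs proper arcs there, i.e., $\Pi{\mathcal C}$ rather than $\Pi{\mathcal S}$. This appears to be a notational slip in the paper itself (compare (\ref{eq:inclusion3}), which is indexed by $\Pi{\mathcal C}$, with (\ref{eq:inclusion4}), which is indexed by $\Pi{\mathcal S}$), but since you made the ``closed curves only'' claim explicit in order to justify injectivity, it should be corrected: for bordered $S$ work with $\Pi{\mathcal C}$, or argue separately for the boundary twist via arcs.
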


The proof also follows the same lines as the proof of the analogous result in the orientable case. It makes use of Theorem
\ref{thm:noDT}, by encoding the twist parameter of a foliation along a 2-sided curve in a pants decomposition by the transverse measure parameters of a pair of curves that are contained in the pants (or pant) adjacent to that curve in the surface. (Recall that there are no twist parameters along a 1-sided curve.)

 \section{Generalized Fenchel-Nielsen Theorem}\label{sec:gfn}
 
 Recall that we have chosen a collection of distinguished points $\vec p$, one such point lying in each boundary component,
 as well as specifying an orientation on each component of the boundary.

Let ${\mathcal P}={\mathcal P}_1\cup {\mathcal P}_2$ be a pants decomposition  of $S=F_{g,r}$ or $S=N_{g,r}$, where ${\mathcal P}_1=\{b_1,\ldots ,b_M\}$ and
${\mathcal P}_2=\{ c_1,\ldots ,c_N\}$ are as in the statement of Theorem \ref{thm:noFN}. For each hyperbolic metric $\rho$ on $S$, each of the curves $\{b_1,\ldots ,b_M\}$ and $\{ c_1,\ldots ,c_N\}$ has a unique $\rho$-geodesic representative. To spare notation, we use the same letter $S$ for the surface $S$ equipped with a hyperbolic structure $\rho$, and $b_i$ and $c_i$ for the $\rho$-geodesic curves in the homotopy classes $b_i$ and $c_i$. A geodesic curve $C=b_i$ or $c_i$ thus has a well-defined \emph{length parameter} $\ell_C(\rho)\in \mathbb{R}_{>0}$. It also has a \emph{twist parameter}
$\theta_C(\rho)\in{\mathbb R}$, which is defined modulo conventions that we shall state precisely, and which measures the relative twist amount along $C$ between the two pairs of pants that have this geodesic in common (where the two pairs of pants may coincide). 

In the classical setting, the definition of the twist parameter is usually given in the case when the curve $C$ is not a boundary curve of the surface, see, e.g.,  \cite[Theorem 4.6.23]{Thurston}, but we shall need to define the twist parameter on $C$ including the case where $C$ is a boundary geodesic of the surface, and we begin with the definition in this case. 

Let us equip each  curve in $\mathcal{P}$ with an orientation and with a distinguished point; if the curve is a boundary component of the surface $S$, then these
data are the ones we have already specified including  $\vec p$. 

If $C$ is a boundary curve of the surface, we choose an isotopy class rel boundary of an arc $\alpha$ connecting the distinguished point from $\cup\vec p$ in $C$ to the distinguished point boundary component of another chosen component of the pair of pants. Now, when the pants decomposition is made geodesic, 
there is an isotopy moving the boundary that carries $\alpha$ to a geodesic arc perpendicular to the two boundary components that it connects.  During this isotopy,
the endpoint of $\alpha$ in $C$ passes a certain number $n$ of times through the distinguished point $p$ it contains;  $n$ comes with a sign, where a ``positive crossing"
corresponds to the specified orientation on $C$.  We let $\ell$ denote the hyperbolic length of $C$
and $\tau$ the hyperbolic length along $C$ from $p$ to the endpoint of the perpendicular arc in the specified orientation.  The {\it twisting number} of the configuration
$(p,\alpha)$ is defined to be $t(p,\alpha)=\tau + n \ell$.
%
%
%
%
%

Let $C$ now be an interior 2-sided pants curve. We choose, as for the case where $C$ is a boundary curve of the surface, an isotopy class rel boundary of an arc $\alpha$ connecting the distinguished point on $C$ to the one of another boundary component of  the pair of pants. Again, when the pants decomposition is made geodesic, 
there is an isotopy moving the boundary that carries $\alpha$ to a geodesic arc perpendicular to the two boundary components that it connects, and during this isotopy,
the endpoint of $\alpha$ in $C$ passes a certain number $n$ of times through the distinguished point it contains; the integer $n$ comes with a sign, where a ``positive crossing"
corresponds to the specified orientation on $C$. Letting as before $\ell$ denote the hyperbolic length of $C$
and $\tau$ the hyperbolic length along $C$ from the distinguished point to the endpoint of the perpendicular arc in the specified orientation, the {\it pre-twisting number} of the distinguished point in $C$ is defined to be $t(p,\alpha)=\tau + n \ell$. Note that the sign of $\tau$, like that of $n$, is positive or negative according to whether the motion is in the sense of the orientation of the curve or in the opposite sense. The {\it twisting number} of the configuration is the difference  of the two pre-twisting numbers  coming from adjacent pairs of pants; the determination of which side occurs with which sign in taking the difference can be made from an orientation on a neighborhood of $C$ in $S$ 
(as induced by inclusion when $S$ is oriented) together with the orientation on $C$ itself.

Now we prove Theorem \ref{thm:noFN}.

We cut the surface $S$ along the geodesic curves in the pants decomposition $\mathcal{P}$ to obtain a collection $\{P_i\}$ of hyperbolic pairs of pants with geodesic boundary. The isometry type of the hyperbolic structure on each of the pairs of pants is completely determined by the lengths of its boundary components, and there is an isometric identification between boundary components of these pairs of pants which is determined by the fact that they come from the hyperbolic structure $\rho$ on $S$. In this correspondence, each curve arising from the set $\mathcal{P}_2$ is identified with another curve arising from the same set, and each curve arising from a curve in $\mathcal{P}_1$ is identified to itself. More precisely, the surface $S$ can be recovered from the collection $\{P_i\}$ in the following way:

$\bullet$ For each boundary component of a hyperbolic pair of pants $P$ which is in $\mathcal{P}_1$, glue it to itself by sending each point to its diametrically opposite point, with respect to any parametrization of this component by arc length.

$\bullet$ For each boundary component of a hyperbolic pair of pants $P$ which is in $\mathcal{P}_2$, glue it to the corresponding boundary component using the twist parameter determined by the hyperbolic structure $\rho$ and the convention used to measure the twists.

For any hyperbolic metric $\rho$ on $S$ and for any element $b_j$ of $\mathcal{P}_1$, we denote by $\mu_j(\rho)$ the $\rho$-length of $b_j$, for $j=1,\ldots ,M$. For an element $c_i$ in $\mathcal{P}_2$, we denote by $\ell_i(\rho)$ and $\theta_i(\rho)$ the $\rho$-lengths and twisting numbers of $c_i$, for $i=1,\ldots ,N$. We show that the mapping
\begin{equation} \label{eq:FN1}
\aligned
{\mathcal T}(S)&\to  {\mathbb R}_{>0}^M\times ({\mathbb R}_{>0}\times {\mathbb R})^N\cr
\rho&\mapsto (\mu_1(\rho),\ldots \mu_M(\rho), \ell_1(\rho),\theta_1(\rho),\ldots, \ell_N(\rho),\theta_N(\rho))\cr
\endaligned
\end{equation}
is injective.

Let $\rho$ and $\sigma$ be two hyperbolic metrics on $S$.
If $\rho$ and $\sigma$ assign different lengths to any one of the 1-sided or 2-sided curves in $\mathcal{P}$, then the two metrics cannot be isotopic, by the uniqueness of the geodesic length in each homotopy class of simple closed curves on $S$.

Assume now the length parameters are all equal, and assume that the $\rho$- and $\sigma$-twist parameters along some 2-sided curve are different. Consider the two pairs of pants (which can be the same) which are adjacent to this curve. Since the curve is 2-sided, the situation is the same as the one studied in \cite{FLP}, and we can find a simple closed curve in the union of these two pairs of pants whose $\rho$- and $\sigma$-length parameters are distinct, showing that the metrics $\sigma$ and $\rho$ are not homotopic.

This shows that the map (\ref{eq:FN1}) is injective. It is also clearly surjective since the length and twist parameters uniquely determine the hyperbolic structures on the pairs of pants and the gluing homeomorphisms between adjacent ones.
The topology of  $\mathcal{T}(S)$ can be defined using these injective maps associated to pairs of pants. The fact that they are real-analytic is seen as in the classical case (cf.\ \cite{Abikoff}).
%

\section{Generalized Dehn-Thurston Theorem}\label{s:Dehn-Thurston}

 The Dehn-Thurston theorem provides a global parametrization of the space of homotopy classes of measured foliations, and it is used to give the topological type of this space. The proof involves the classification of measured foliations on pairs of pants with the twist parameters used to glue foliations on adjacent pairs of pants.
 
We start with facts on measured foliations on surfaces with boundary.  The local models of the singular points for measured foliations on non-orientable surfaces are the same as those on orientable surfaces. For points in the interior of the surface, these are generalized saddles ($n$-prong singularities, $n\geq 3$, see Figure \ref{singular}.) By definition, the model of a singular point on the boundary is such that if we double the surface along the boundary component that contains that boundary point then we have the usual model of a singular point at an interior point (see Figure \ref{double}).

\begin{figure}[hbt]%
 \begin{center}
 \includegraphics[width=12cm]{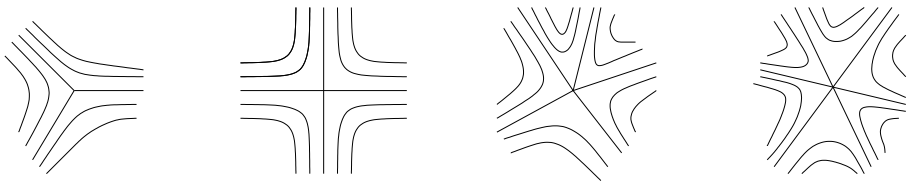}
 \caption{The local models for a singular point at an interior point of the surface.}
 \label{singular}%
\end{center}
\end{figure}

\begin{figure}[hbt]%
 \begin{center}
 \includegraphics[width=3.cm]{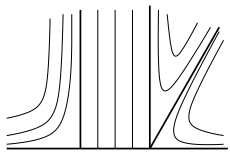}
 \caption{The model for a singular point on the boundary is such that it becomes a model for an interior singular point when we double the surface along that boundary component.}
 \label{double}%
\end{center}
\end{figure}

\begin{lemma} \label{lem:fol-Mob}
A measured foliation of a M\"obius band is of one of the following two types:
\begin{enumerate}
\item \label{fol1} all the leaves are simple closed curves, and one of them (the ``core curve") is 1-sided while the others are homotopically double covers of this 1-sided curve;
\item \label{fol2} all the leaves are arcs joining the boundary to itself.
\end{enumerate}
\end{lemma}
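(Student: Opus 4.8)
The plan is to reduce the statement to two facts: that $\mathcal F$ can have no singular points whatsoever, and the classical classification of measured foliations of an annulus. First I would double $M$ along its boundary. The double is the Klein bottle $K$, and, by the very definition of the boundary singularity model (Figure~\ref{double}), $\mathcal F$ doubles to a measured foliation $D\mathcal F$ on $K$ that is invariant under the reflection fixing the doubling circle $\partial M\subset K$; an interior singular point of $\mathcal F$ produces interior singular points of $D\mathcal F$, and a boundary singular point of $\mathcal F$ produces an interior singular point of $D\mathcal F$ on the doubling circle. The Euler--Poincar\'e formula on a closed surface $\Sigma$ gives $\sum_p(2-k_p)=2\chi(\Sigma)$, where the sum is over singular points and $k_p\ge 3$ is the number of prongs at $p$ (cf.\ \cite{FLP}); for $\Sigma=K$ the right--hand side is $0$ while every summand is $\le -1$, so $D\mathcal F$, and hence $\mathcal F$, has no singular point at all (and, since this count is a Whitehead invariant, the same holds for every representative of the class). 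Consequently $\partial M$ is either globally a leaf of $\mathcal F$ or globally transverse to $\mathcal F$, because in the absence of boundary singularities only these two local pictures occur and $\partial M$ is connected.

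Next I would pass to the orientation double cover $\pi\colon A\to M$. Here $A$ is an annulus (it is orientable, has $\chi(A)=0$, and has two boundary circles since $\partial M$ is $2$-sided), and its deck involution $\iota$ is free, orientation--reversing, and interchanges the two boundary circles of $A$ (otherwise $A/\iota=M$ would have two boundary components). The pullback $\widetilde{\mathcal F}=\pi^*\mathcal F$ is an $\iota$-invariant nonsingular measured foliation of $A$, so by the classification of measured foliations of the annulus (cf.\ \cite{FLP,Penner-thesis}) it is, up to isotopy, either a foliation by closed leaves parallel to the core, with $\partial A$ a pair of leaves, or a foliation by arcs running from one boundary circle to the other, with $\partial A$ transverse; the alternatives correspond respectively to $\partial M$ being a leaf or being transverse, by the previous paragraph. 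In the first case $\iota$ permutes the core--parallel leaves and, interchanging the two boundary leaves, fixes exactly one leaf $\lambda_0$, acting on it as a free involution of a circle; then $\pi(\lambda_0)$ is a $1$-sided simple closed curve, hence isotopic to the core of $M$, while every other leaf maps homeomorphically onto a $2$-sided essential simple closed curve $\pi(\lambda)$ whose preimage $\lambda\sqcup\iota(\lambda)$ is disconnected, so $\pi(\lambda)$ is isotopic to $\partial M$ and is thus, up to homotopy, a double cover of the core. Together with $\pi(\partial A)=\partial M$ these are all the leaves of $\mathcal F$, which is case~(\ref{fol1}). In the second case no arc leaf $\widetilde\lambda$ is $\iota$-invariant (an $\iota$-invariant arc joining the two boundary circles would force a fixed point of the free map $\iota$), so $\widetilde\lambda$ and $\iota(\widetilde\lambda)$ are disjoint and $\pi$ carries $\widetilde\lambda$ homeomorphically to an arc of $M$ with both endpoints on $\partial M$; these images foliate $M$ by arcs joining the boundary to itself, which is case~(\ref{fol2}).

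The one delicate step is the input from the annulus and its equivariant descent: one must check that the \emph{invariant transverse measure} genuinely forbids a leaf from spiralling towards a boundary circle or accumulating on a closed leaf, so that the two clean normal forms above really do exhaust the nonsingular possibilities --- this is the point at which the difference between a measured foliation and a bare foliation is used, and where the argument leans on the classical theory. Once this is granted, the remaining verifications (identifying the covering-theoretic meaning of $\iota$ and checking that $\pi$ is injective on the non-invariant leaves) are routine.
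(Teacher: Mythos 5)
Your proof is correct and follows essentially the same strategy as the paper's: rule out singular points by passing to a closed surface of Euler characteristic zero, then analyze the nonsingular foliation on the orientation double cover (an annulus) and descend via the deck involution. The only cosmetic difference is that you double $M$ along $\partial M$ to get a Klein bottle to kill the singularities, whereas the paper first passes to the annular double cover and then doubles that along its boundary to get a torus; both are closed surfaces with $\chi=0$, so both routes serve equally well, and your write-up is somewhat more explicit about the action of the deck involution on leaves than the paper is.
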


\begin{proof}
Consider an annulus which double covers the M\"obius band, equipped with the measured foliation induced from that covering. There are two cases:

Assume first that the boundary component of the annulus is a leaf. Doubling the annulus along its two boundary components, we get a torus equipped with measured foliation that has a closed leaf. Therefore, all the leaves of that foliation are closed and homotopic to each other. This shows that we are in case (\ref{fol1}) of the statement.

Assume now that the boundary component of the M\"obius band is not a leaf. By taking an annular double cover and then a torus double again, we obtain a foliation of the torus. Such a foliation cannot have singular points. Therefore, the foliation on the annulus is transverse to boundary curves of  the annulus, and any leaf is an arc joining a boundary component to the other one (this follows from the ``stability lemma" (see \cite{FLP} Lemma II.4 of Expos\'e 5). From this, it is easy to see that we are in case  (\ref{fol2}) of the statement.

\end{proof}
 
 To define the intersection and twisting coordinates for the 2-sided curves that appear in the statement of Theorem \ref{thm:noDT}, we need first recall the classification of equivalence classes of measured foliations on pairs of pants (see \cite{FLP}, Expos\'e 6, \S 2).
 
  Let $P$ be a pair of pants with boundary components $\partial_i$ ($i=1,2,3$) equipped with a measured foliation $\mathcal{F}$.  It also follows from the classification of measured foliations on pairs of pants (\cite{FLP} Expos\'e 6) that a boundary component of $P$ is then either transverse to $\mathcal{F}$ or it is a (perhaps singular) leaf; that is, there are no mixed cases as in Figure \ref{double}. In the latter case (that is, when $\mathcal{F}$ is not transverse to a boundary component), then we say that $\mathcal{F}$ is ``tangent'' to the given boundary component. 
  
  If $\mathcal{F}$ meets each boundary component of the pair of pants $P$  transversely, then the equivalence class of $\mathcal{F}$ is determined by the three positive numbers $i(\mathcal{F},\partial_i)$.
In the case where $\mathcal{F}$ is tangent to some boundary component $\partial_i$,  there is a maximal annulus having $\partial_i$ as a boundary component and  foliated by leaves which are homotopic to $\partial_i$. (The annulus may be degenerate, that is, reduced to a single curve, and in this case, the curve necessarily contains a singular point). We shall call the transverse measure of an arc joining the two boundary components of that annulus the \emph{twisting number} of that boundary component, for reasons that will become apparent below. In any case, the equivalence class of a measured foliation $\mathcal{F}$ on the pair of pants $P$ is determined by three nonnegative numbers, one number attached to each of the three boundary components, and this number is either a transverse measure (and in this case is positive) or it is a twisting number (and in the latter case is either positive or zero).

Let us fix a pants decomposition ${\mathcal P}$ of $F=F_{g,r}$ of $N_{g,r}$. It follows from \cite{FLP}, Expos\'e 6 \S 2 (and the proof carries over to the non-orientable case) that any equivalence class of measured foliation $\mathcal{F}$ on $F$ has a representative which is in \emph{good position} with respect to  ${\mathcal P}$, that is, this representative is a measured foliation (which we also denote by $\mathcal{F}$) such that each pants curve is either transverse to $\mathcal{F}$ or is a (perhaps singular) leaf of $\mathcal{F}$. In particular,  $\mathcal{F}$ induces on each pair of pants a measured foliation in the usual sense, and  the above classification and parameters of foliations on pairs of pants are available.  

We now prove Theorem \ref{thm:noDT}.
 
Let ${\mathcal P}={\mathcal P}_1\cup {\mathcal P}_2$ be a pants decomposition of $S=F_{g,r}$ or $S=N_{g,r}$, where ${\mathcal P}_1=\{b_1,\ldots ,b_M\}$ and
${\mathcal P}_2=\{ c_1,\ldots ,c_N\}$ are, as before, the 1-sided and the 2-sided curves respectively. Given an (equivalence class of) measured foliation  $\mathcal{F}$, we can assume that it is in good position with respect to $\mathcal{P}$. For such a measured foliation, the intersection number $i(\mathcal{F},C)$ of each pants curve  $C=b_i$ or $c_i$ is equal to the actual transverse $\mathcal{F}$-measure of $C$.

 Let us now define the intersection parameters of $C$.  
 In the case where $C=b_j$ is 1-sided, the intersection number $n_j(\mathcal{F})\in{\mathbb R}$ is defined as  in the statement of Theorem \ref{thm:noDT}, that is, if the foliation contains a closed leaf isotopic to $b_j$, then  $n_j(\mathcal{F})$ is  taken to be the negative of the maximum width of the band of leaves foliating a M\"obius band neighborhood of $b_j$, while $n_j({\mathcal F})$ is positive if and only if it agrees with $i({\mathcal F},b_j)\neq 0$.  There is no twisting number defined in this case.
 In the case where $C=c_j$ is 2-sided,  the intersection number $m_j(\mathcal{F})=i({\mathcal F},C)\geq 0$ is the $\mathcal{F}$-measure of $C$. We now define the twisting number $t_j(\mathcal{F})$. 
 First assume that $i(\mathcal{F}, c_j)=0$. Since $\mathcal{F}$ is good position with respect to $\mathcal{P}$, $c_j$ is a (perhaps singular) leaf of $\mathcal{F}$. The \emph{twisting number} $t_j(\mathcal F)$ of $\mathcal{F}$ with respect to $\mathcal{P}$ is then the transverse measure of a transverse arc 
joining the two boundary components of the maximal foliated annulus in $S$ parallel to $c_j$. (The maximal annulus might be reduced to the curve $c_j$ itself, and in this case the transverse measure is zero.)

In the case where $i(\mathcal{F},c_j)>0$, the twisting parameter is defined in a way analogous to the twisting number in the case of the Fenchel-Nielsen parameters: We equip, as in the case of the twisting parameters for hyperbolic structures,  the 2-sided pants curve $c_j$ with an orientation and with a distinguished point. As before, if the curve is a boundary component of the surface $S$, then the distinguished point and orientations are the ones that we already have specified. Again, for each pants curve $c_j$, we choose an isotopy class rel boundary of arc $\alpha$ connecting $c_j$ to another chosen boundary component of each pair of pants to which it belongs.  We can now define the twisting parameter on the pants curve $c_j$.

We first consider the case where $c_j$ is a boundary component of $F$. Since we assumed the measured foliation $\mathcal{F}$ to be in good position with respect to the pair of pants decomposition $\mathcal{P}$ and since $i(\mathcal{F},c_j)>0$, it must be that $c_j$ is transverse to $\mathcal{F}$ and there is an isotopy moving the points on this pants curve that carries $\alpha$ to an arc joining the two boundary components of the given pair of pants that it connects and which is in minimal position with respect to $\mathcal{F}$.  Note that this arc necessarily arrives at each boundary point of the pair of pants along a leaf of $\mathcal{F}$. During this isotopy,
the endpoint of $\alpha$ in $c_j$ passes a certain number $n$ of times through the distinguished point $p$ it contains and  $n$ comes with a sign, where a ``positive crossing"
corresponds to the specified orientation on $c_j$. 
Denoting by $m(c_j)$ the $\mathcal{F}$-transverse measure of $c_j$
and $\tau (c_j)$ the transverse measure along $c_j$ from $p$ to the endpoint of the perpendicular arc in the specified orientation, the {\it pre-twisting number} of the configuration
$(p,\alpha)$ is defined to be $t(p,\alpha)=\tau (c_j)+ n m(c_j)$.  As usual, the twisting number $t_j(\mathcal{F})$  for the boundary curve $c_j$
arises by comparing the sense of the pre-twisting with that of the orientation of a neighborhood of the curve.

If $C=c_j$ is now an interior pants curve, then with the above construction, there are two arcs $\alpha$ and $\beta$ with endpoints in $C$ and two pre-twisting numbers, one from each side of $C$. The twisting number of the distinguished point in $C$ is then defined as the difference of the two pre-twisting numbers on $C$ (again, after a choice made by orientations of which pre-twisting number is subtracted from the other).
 
We now show that the map $\mathcal{J}$ in the statement of Theorem \ref{thm:noDT} is a bijection.
Let $\mathcal{F}$ and $\mathcal{G}$ be two elements in $\mathcal{F}$ that have the same image under $\mathcal{J}$. If the intersection numbers of $\mathcal{F}$ and $\mathcal{G}$ with one of the curves in $\mathcal{P}$ are different, then it follows from the definition of the equivalence relation between measured foliations that the elements $\mathcal{F}$ and $\mathcal{G}$ are distinct.

Assume now that the intersection number of any curve in $\mathcal{P}$ with $\mathcal{F}$ is equal to its intersection number with $\mathcal{G}$. If $\mathcal{F}\not=\mathcal{G}$, then the twist parameters of  $\mathcal{F}$ and  $\mathcal{G}$ with respect to some curve in $\mathcal{P}$ are not the same. 

We treat the following two cases separately:

Case 1.--- Suppose first that this curve is a 1-sided curve $b_j\in \mathcal{P}_1$. By assumption, $i(b_j,\mathcal{F})=i(b_j,\mathcal{G})$, so the two parameters $n_j(\mathcal{F})$ and $n_j(\mathcal{G})$ are both nonpositive. This means that there is a foliation of a maximal M\"obius band in $\mathcal{F}$ and in $\mathcal{G}$ whose core curve is $b_j$ and all whose leaves are  double covers of this core curve, and such that the transverse measure of a complete transversal is not the same for $\mathcal{F}$ and for $\mathcal{G}$. (One of these two transverse measures might be zero.) We take such a maximal foliated M\"obius band in $\mathcal{F}$ and in $\mathcal{G}$ and we denote it by $M_1$ and $M_2$ respectively. (To spare notation, we shall denote by the same letters, $M_1$ and $M_2$, the measured foliations supported on the M\"obius bands $M_1$ and $M_2$.) Now the foliation induced by $\mathcal{F}$ (respectively $\mathcal{G}$) on $P$ is the union of $M_1$ (respectively $M_2$) with a collection of leaves that are in the complement of $M_1$ (respectively $M_2$) . Let $\gamma$ be the closed curve represented in Figure \ref{gamma}. 
\begin{figure}[hbt] 
 \begin{center}
 \includegraphics[width=12cm]{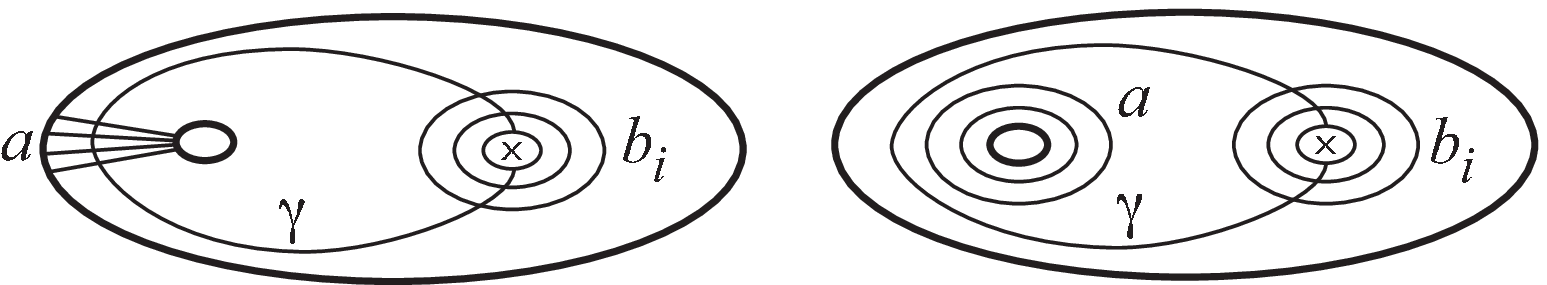}
 \caption{}
 \label{gamma} 
\end{center}
\end{figure}
In the pair of pants $P$, $\gamma$ restricts to an arc joining $b_j$ to itself. In one of the cases represented in the figure, $\mathcal{F}$ or $\mathcal{G}$ (or, equivalently, both of them) has zero intersection number with the two other bondary components of $S$. In this case, the intersection of $\gamma$ with $\mathcal{F}$ (respectively $\mathcal{G}$) is equal to $i(\gamma,M_1)+m_1$ (respectively $i(\gamma,M_2)+m_2$) where $m_1$ and $m_2$ are the intersection numbers of $\mathcal{F}$ and $\mathcal{G}$ respectively with the foliation induced on the pair of pants $P$ by   $\mathcal{F}$ and $\mathcal{G}$. Note that all the leaves of the induced foliation are arcs that go from the two boundary components of $P$ that are not $b_j$.  Let $a$ be either of the two boundary components of $P$ that is distinct from $b_j$. Thus, $m_1=i(\mathcal{F},a)$ and $m_2=i(\mathcal{G},a)$. However, by assumption $m_1=n_1$, so $i(\gamma,\mathcal{F})\not=  i(\gamma,\mathcal{G})$. This shows that $\mathcal{F}\not=\mathcal{G}$. In the other case represented in Figure \ref{gamma}, both $\mathcal{F}$ and $\mathcal{G}$ have zero intersection number with the two boundary components of $P$ that are distinct from $b_j$. In this case, we have $i(\gamma,\mathcal{F})= i(\gamma,M_1)$ and  $i(\gamma,\mathcal{G})= i(\gamma,M_2)$ which also gives 
 $i(\gamma,\mathcal{F})\not=i(\gamma,\mathcal{G})$, so $\mathcal{F}\not=\mathcal{G}$.

Case 2.--- Assume now that the curve in $\mathcal{P}$ on which the twist parameters are not the same for $\mathcal{F}$ and  $\mathcal{G}$ is a 2-sided curve $c_j$.
If $i(\mathcal{F},c_j)=i(\mathcal{F},c_j)=0$ (recall that we are under the assumption that the two intersection numbers are the same), then there are two maximal foliated annuli parallel to $c_j$ whose total $\mathcal{F}$-width and $\mathcal{G}$-width are different. This shows that $\mathcal{F}$ and $\mathcal{G}$ are not equivalent.

The case where $i(\mathcal{F},c_j)=i(\mathcal{F},c_j) \not=0$ and the twisting parameters are different is treated in the same way as the classical (orientable) case, see \cite{FLP}, Expos\'e 6, \S 2. On each side of the curve there is a pair of pants (which could be the same for the two sides), and in the union of these pair of pants (respectively in this pair of pants) it is possible to find a simple closed curve whose transverse measures with respect to $\mathcal{F}$ and  $\mathcal{G}$ are different. Thus, the foliations are not equivalent, completing the proof of the theorem.

To close the section, we comment on the case where the parameters are integers, that is, where the measured foliations are obtained by ``enlargement" of simple closed curves and proper arcs. This will lead us to a parametrization \`a la Dehn of the set of simple closed curves and proper arcs in $S$.

We first consider the case of an orientable surface $F$. 

There is an \emph{enlargement} operation (see \cite{FLP} Expos\'e 5 \S 3) which embeds the set $\mathcal{S}$ of homotopy classes of essential simple closed curves in $F$ into  $\mathcal{MF}(F)$. Likewise, we need to define an embedding of the set of arcs.
To this end, let $a$ be an essential arc joining two boundary components $\partial_1$ and $\partial_2$ of $F$ and let $r$ be a positive number. Let $N(a)$ be a regular neighborhood of $a$ in $F$, so $N(a)$ has the natural structure of a quadrilateral having two opposite sides $s_1$ and $s_2$ contained in $\partial_1$ and $\partial_2$ respectively, the other opposite sides being homotopic rel boundary to the arc $a$. We equip the quadrilateral $N(a)$ by a foliation with leaves homotopic rel boundary to the curve $a$, and we equip this foliation with a transverse measure such that the total measure of each of the two arcs $s_1$ and $s_2$ is equal to $r$. We then collapse  the surface with boundary $F'=F\setminus N(a)$ onto a one-dimensional spine in such a way that the image of each piece of $N(a)$ which is in the boundary of $F'$ injects into the spine. We get a measured foliation all of whose nonsingular leaves are arcs in the same homotopy class. The new surface is equipped with a canonical homotopy class of maps onto $F$ which sends the homotopy class of the nonsingular leaves to the homotopy class of the arc $a$. Taking the image of this foliation by one such map gives a well-defined equivalence class of measured foliations on $F$ with all non-singular leaves in the homotopy class $a$. This defines a map from the set of equivalence classes of weighted arcs on $F$ into the set of equivalence classes of measured foliations on $F$. 

This map is injective. To see this, we use the well-known fact \cite{FLP} that in the case of closed surfaces, the enlargement operation from the set of weighted simple closed curves to measured foliation space is injective.  It suffices to double the surface $F$ along its boundary components, obtaining a surface without boundary $F^d$. The operation of enlargement of arcs in $F$ gives, in the double, an operation of  enlargement of curves (the double of the arcs). The fact that the enlargement of curves operation gives an injection from the space $\mathbb{R}_{>0}\times \mathcal{S}(F^d)$ into the space $\mathcal{MF}(F^d)$ implies that the enlargement of arcs in $F$ gives an injection from the space $\mathbb{R}_{>0}\times \mathcal{C}(F)$ into the space $\mathcal{MF}(F)$. It is also clear that the image of the homotopy class of an arc in measured foliation space cannot coincide with the image of a homotopy class of a curve since the measured foliation associated to an arc has leaves which are transverse to the boundary, unlike the case of a measured foliation associated to a curve.

 In the case of a non-orientable surface $N$, the theory works in the same way except that the injectivity fails only at the level of the non-primitive 2-sided closed curves, whose enlargement coincides with the enlargement of the 1-sided curve which they double-cover. This is the principal reason for which we introduce the set $\Pi \mathcal{S}(S)$ versus the set $\mathcal{S}(S)$.
Thus, we have a natural embedding
\begin{equation} \label{eq:embedding-arcs}
\Pi{\mathcal S}(S)\hookrightarrow \mathcal{MF}(S).
\end{equation}

The fact that measured foliations can be put in normal position with respect any pants decomposition implies (using the enlargement operation) a similar statement for elements of $\mathcal{D}$ (curves and arcs), where in each pair of pants, such an element induces a system of arcs (perhaps empty) joining a boundary component to itself or two different boundary components, together with a system of curves parallel to boundary components.  Theorem \ref{thm:noDT} gives the following for curves and arcs:

\begin{theorem}[Generalized Dehn Theorem for curves and arcs]\label{thm:noDTC}

Fix a pants decomposition ${\mathcal P}={\mathcal P}_1\cup {\mathcal P}_2$ of $S=F_{g,r}$ or $S=N_{g,r}$, where ${\mathcal P}_1=\{b_1,\ldots ,b_M\}$ and
${\mathcal P}_2=\{ c_1,\ldots ,c_N\}$.  
Given an element $d$ in $\mathcal{D}(S)$ (that is, an isotopy class of a not necessarily connected 1-dimensional manifold of $S$ whose components are essential and non-boundary parallel) for each $b_i$, $j=1,\ldots, M$, define an integer parameter $n_j(d)$ as follows:
\begin{itemize}
\item if the intersection number of $d$ with $b_j$ is positive, then $n_j(d)\geq 0$ agrees with this intersection number;
\item if $d$ contains a closed leaf isotopic to $b_j$ (and in that case, the intersection number of $d$ with $b_j$ is zero), then $n_j(d)\leq 0$ is  given by
$-2k$ if $d$ consists of $k$ disjoint copies of the 2-sided double cover of $b_j$ and by 
$-2k-1$ if $d$ consists of $k$ disjoint copies of the double cover of $b_j$ plus a copy of $b_j$ itself.
\end{itemize}
 For an element $c_j$ of $\mathcal{P}_2$, we let $m_j(d)$ and $t_j(d)$ denote the intersection and twisting numbers of $d$ with $c_j$, for $j=1,\ldots ,N$, computed as in Theorem \ref{thm:noDT} for measured foliations.
Then the mapping
$$
\aligned
{\mathcal{K}: \mathcal {D}}(S)&\to {\mathbb Z}^M\times (({\mathbb N}_{>0}\times{\mathbb Z})\cup (\{0\} \times {\mathbb Z}))^N\cr
\mathcal F&\mapsto (n_1(\mathcal D),\ldots, n_M(\mathcal D),m_1(\mathcal D),t_1(\mathcal D),\ldots ,m_N(\mathcal D),t_N(\mathcal D))\cr
\endaligned
$$
is a bijection.  
\end{theorem}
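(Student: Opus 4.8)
The plan is to deduce Theorem~\ref{thm:noDTC} from the generalized Dehn--Thurston Theorem~\ref{thm:noDT} by restricting attention to integral data through the enlargement operation. Given $d\in\mathcal D(S)$, put $d$ in good position with respect to $\mathcal P$ (possible for $1$-submanifolds via the enlargement exactly as for measured foliations), and let $\widehat d\in\mathcal{MF}(S)$ be the enlargement of $d$ with counting measure. The first point is that $\mathcal K(d)$ records precisely the Dehn--Thurston coordinates of $\widehat d$: for a $2$-sided curve $c_i$ this is immediate from the definitions of $m_i,t_i$ in \S\ref{s:Dehn-Thurston}, while for a $1$-sided curve $b_j$ one uses Lemma~\ref{lem:fol-Mob} to see that the enlargement of a configuration consisting of $k$ curves isotopic to the $2$-sided boundary of a M\"obius-band neighbourhood of $b_j$ together with $\varepsilon\in\{0,1\}$ copies of $b_j$ itself is exactly the type-(\ref{fol1}) foliation of that M\"obius band whose maximal band of leaves has width $2k+\varepsilon$ (widths of disjoint parallel leaves add, a core leaf contributing width $1$ and a boundary-parallel one width $2$, as one sees by passing to the orientation double cover). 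Hence when $i(d,b_j)=0$ the real parameter attached to $b_j$ in Theorem~\ref{thm:noDT} equals the integer $n_j(d)=-(2k+\varepsilon)$, and when $i(d,b_j)>0$ both equal $i(d,b_j)$. Consequently $\mathcal K$ is the composite of $d\mapsto\widehat d$ with the Dehn--Thurston bijection $\mathcal J$ of Theorem~\ref{thm:noDT}, followed by the evident bijection between the integral points of the target of $\mathcal J$ (with the gluing described after that theorem) and the target of $\mathcal K$. Since $\mathcal J$ is a bijection and $\widehat d$ manifestly has integral coordinates, Theorem~\ref{thm:noDTC} reduces to the assertion that enlargement restricts to a bijection from $\mathcal D(S)$ onto the set of measured foliations whose Dehn--Thurston coordinates are all integral.

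For injectivity of this restriction, suppose $\widehat d=\widehat{d'}$ in $\mathcal{MF}(S)$. Away from the M\"obius-band neighbourhoods of the curves of $\mathcal P_1$ the argument is the classical one for orientable surfaces (\cite{FLP}, Expos\'e~6; for arc components one passes to the double along the boundary exactly as in the proof of the embedding \eqref{eq:embedding-arcs}). In the non-orientable setting the only new way two distinct curve systems could have equal enlargement is the phenomenon that a $2$-sided non-primitive curve has the same enlargement as ``twice its core''; but an element of $\mathcal D(S)$ can contain at most one parallel copy of a given $1$-sided curve, since two disjoint copies cannot be embedded in its M\"obius-band neighbourhood, so this coincidence never occurs inside $\mathcal D(S)$ and we conclude $d=d'$.

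For surjectivity, given a measured foliation $\mathcal F$ with integral Dehn--Thurston coordinates, put it in good position and cut $S$ along the $2$-sided pants curves $\mathcal P_2$ only; the pieces are then pairs of pants, possibly with one, two, or three boundary circles capped by crosscaps (so one of $N_{1,2}$, $N_{2,1}$, $N_{3,0}$), the crosscaps being the M\"obius-band neighbourhoods of the curves of $\mathcal P_1$. On each such piece $\mathcal F$ restricts to a foliation with integral parameters, which one un-enlarges to a unique integral system of disjoint essential arcs and curves: on the pair-of-pants part this is the classical analysis of \cite{FLP}, Expos\'e~6, \S2, and near each crosscap Lemma~\ref{lem:fol-Mob} identifies the induced foliation as type-(\ref{fol1}) of some width $w\ge 0$, un-enlarged as $\lfloor w/2\rfloor$ boundary-parallel copies together with the core precisely when $w$ is odd. (This is why one must not cut along the $1$-sided curves: doing so would leave a spurious half-integer twist parameter $w/2$.) The pieces are then reassembled along $\mathcal P_2$ using the integer twists $t_i$, exactly as in the reconstruction of $S$ from its pairs of pants in \S\ref{sec:gfn}; the resulting $1$-submanifold $d$ has no inessential curve and no boundary-parallel arc component (these properties being inherited from $\mathcal F$ after removing trivial leaves by Whitehead moves), so $d\in\mathcal D(S)$, and $\widehat d=\mathcal F$ by construction.

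I expect the main obstacle to be the bookkeeping at the $1$-sided curves: one must check carefully that the ``$-2k$ versus $-2k-1$'' convention for $n_j$ is matched simultaneously by the width of the M\"obius-band foliation produced by enlargement and by what the un-enlargement of a width-$w$ type-(\ref{fol1}) foliation returns, and that cutting only along $\mathcal P_2$ keeps all local parameters integral. The remainder is a faithful transcription of the orientable arguments (\cite{FLP}, \cite{Penner-thesis}, \cite{Harer-Penner}) underlying the classical Dehn Theorem~\ref{thm:DT}.
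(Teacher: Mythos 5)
Your proposal is correct and takes essentially the same approach as the paper, which simply states that the result follows from Theorem~\ref{thm:noDT} together with the injectivity of the enlargement map; you flesh out this one-line proof into a complete argument. The width bookkeeping you carry out near the crosscaps (a core leaf contributing $1$, a boundary-parallel copy contributing $2$) is exactly what is needed to match the $n_j=-(2k+\varepsilon)$ convention, and your observation that ``twice the core'' is never an element of $\mathcal{D}(S)$ correctly disposes of the only potential failure of injectivity.
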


The proof of the theorem follows from that of Theorem \ref{thm:noDT}, using the injectivity of enlargement map that we already described. 

\section{The Klein Bottle Minus a Disk}

Let $K_1=N_{2,1}$ denote the Klein bottle minus an open disk and let $M_1=N_{1,2}$ denote the M\"obius band minus an open disk.

\begin{lemma}\label{lem:2pd}
Up to isotopy, there are exactly two essential and non-peripheral (i.e., non-boundary parallel) simple closed curves in $M_1$.  Each such curve is 1-sided, and these are exactly (up to isotopy) the two possible pants decompositions of this surface.
\end{lemma}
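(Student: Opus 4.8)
The plan is to reduce the statement to counting the isotopy classes of one-sided simple closed curves in $M_1$, and to carry out that count in the orientation double cover. Here, consistently with the conventions of Section~\ref{s:background}, I regard a non-primitive curve — one bounding a Möbius band — as not being among the curves to be counted, so the simple closed curves under consideration are exactly those that may appear in a pants decomposition.

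First I would record that $\chi(M_1)=\chi(N_{1,2})=-1$ and dispatch three preliminary points by elementary Euler-characteristic and orientability bookkeeping. (i) \emph{Every essential, non-peripheral, primitive simple closed curve $c$ in $M_1$ is one-sided.} If $c$ were two-sided, cut $M_1$ along it. In the non-separating case the complement is connected with $\chi=-1$ and four boundary components, which is impossible since a connected surface with four boundary components has $\chi\leq -2$. In the separating case the two complementary pieces have Euler characteristics summing to $-1$ with neither positive, so one of them has $\chi=0$, hence is an annulus or a Möbius band; in the first case $c$ is parallel to a boundary component of $M_1$, contradicting non-peripherality, and in the second $c$ bounds a Möbius band, contradicting primitivity. (ii) \emph{Every one-sided simple closed curve $c$ in $M_1$ is non-separating, and cutting $M_1$ along it yields a pair of pants; hence $\{c\}$ is a pants decomposition.} A one-sided curve is non-separating (a separating curve is null-homologous mod $2$, while a one-sided curve has non-trivial normal bundle and hence is not), so the complement is connected with $\chi=-1$ and three boundary components (the two of $M_1$ together with the double cover of $c$), and the relation $2-k-3=-1$ forces non-orientable genus $k=0$, so this complement is a pair of pants. (iii) \emph{Every pants decomposition of $M_1$ consists of a single one-sided curve.} A pants decomposition cuts $M_1$ into $n$ pairs of pants with $-n=\chi(M_1)=-1$, so $n=1$; if it uses $k_1$ one-sided and $k_2$ interior two-sided curves, then cutting turns the $2$ boundary components of $M_1$ into the $3$ of a pair of pants, so $k_1+2k_2=1$, whence $k_1=1$ and $k_2=0$.

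It remains to count the one-sided simple closed curves in $M_1$ up to isotopy, and this is where the work is. The orientation double cover of $M_1=N_{1,2}$ is a four-holed sphere $F_{0,4}$ — it is connected, orientable, has $\chi=-2$, and has two boundary circles over each boundary component of $M_1$ — equipped with a free, orientation-reversing deck involution $\iota$ that interchanges the two circles lying over each boundary component of $M_1$. A one-sided simple closed curve $c\subset M_1$ lifts to a single $\iota$-invariant simple closed curve $\widetilde c\subset F_{0,4}$ along which $\iota$ reverses the transverse orientation — equivalently, $\iota$ interchanges the two complementary sides of $\widetilde c$ — and conversely any such $\widetilde c$ descends to a one-sided (hence essential, non-peripheral, primitive) curve; this gives a bijection of isotopy classes. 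Now $F_{0,4}$ has exactly three isotopy classes of essential non-peripheral simple closed curves, one for each partition of its four boundary circles into two pairs. Since $\iota$ permutes the four boundary circles as two transpositions, it swaps the two pairs of two of these partitions and preserves each pair of the third. For the third partition, an $\iota$-invariant representative would have $\iota$ preserving each complementary piece, hence preserving its transverse orientation; as $\iota$ is orientation-reversing, this would force $\iota$ to reverse the curve's orientation and therefore to have fixed points on it, contradicting that $\iota$ is fixed-point-free. Hence only the two ``side-swapping'' partitions occur, so there are exactly two isotopy classes of one-sided curves in $M_1$, call them $\gamma_1$ and $\gamma_2$; they are distinct because their lifts are non-isotopic in $F_{0,4}$. (Alternatively one can argue without the double cover: cut $M_1$ along a fixed one-sided curve $\gamma_1$ to obtain a pair of pants $Q$ with distinguished boundary circle $\partial_0$ coming from $\gamma_1$; put another one-sided curve $c$ in minimal position with respect to $\gamma_1$, so $c\cap\gamma_1$ is a single point and $c$ meets $Q$ in a single properly embedded arc with antipodal endpoints on $\partial_0$; a pair of pants has exactly two isotopy classes of such arcs — the inessential one, giving $c\simeq\gamma_1$, and the one separating the two remaining boundary circles, giving $c\simeq\gamma_2$ — and $\gamma_1\not\simeq\gamma_2$ because $[\gamma_2]=[\gamma_1]+[\partial_1]\neq[\gamma_1]$ in $H_1(M_1;\mathbb{Z}/2\mathbb{Z})$.)

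Combining (i), (ii), (iii) with the count: the essential, non-peripheral, primitive simple closed curves in $M_1$ are precisely the two one-sided curves $\gamma_1,\gamma_2$, each of which is a pants decomposition of $M_1$, and by (iii) these are all the pants decompositions. The main obstacle is the count of one-sided curves — the $\iota$-equivariant bookkeeping singling out which two of the three curves in $F_{0,4}$ descend to one-sided curves in $M_1$ (or, in the alternative argument, verifying that the geometric intersection with the fixed one-sided curve $\gamma_1$ can be reduced to a single point, which needs a bigon/monogon reduction adapted to one-sided curves) — together with keeping track of the convention that non-primitive curves are excluded.
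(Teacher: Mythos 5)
Your reduction steps (i)--(iii) --- that the relevant curves are all $1$-sided, that each such curve cuts $M_1$ into a single pair of pants, and that any pants decomposition is a single $1$-sided curve --- are correct and a nice explicit Euler-characteristic bookkeeping. But your main argument has a genuine error at the counting step: $F_{0,4}$ does \emph{not} have exactly three isotopy classes of essential non-peripheral simple closed curves; it has infinitely many. What is true is that there are three \emph{partition types} (according to how the four boundary circles are split into pairs), but within each type there are infinitely many isotopy classes related by Dehn twisting along a curve of a different type. Your analysis correctly rules out one partition type as having no $\iota$-invariant representative, but for the other two you would then still need to show that each admits a \emph{unique} $\iota$-invariant curve up to $\iota$-equivariant isotopy, and this is precisely what is missing. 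At the end you flag the ``$\iota$-equivariant bookkeeping'' as the obstacle, but the more fundamental problem is that the assertion ``three isotopy classes'' is already false, so the count as written does not go through. Your parenthetical alternative (cutting along a fixed $\gamma_1$) is closer in spirit to a correct proof, but it hinges on the unproved claim that any other $1$-sided curve can be isotoped to meet $\gamma_1$ in a single point; you flag this gap yourself, and it is the whole content of the argument.

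For comparison, the paper's proof is short and avoids both of these difficulties by a different device: fill in the removed disk to embed $M_1\hookrightarrow M$ in the (closed) M\"obius band $M$, which has a \emph{unique} essential non-boundary-parallel simple closed curve, its core. Any pants curve of $M_1$ is therefore isotopic in $M$ to the core, and tightening that isotopy it must pass the filled-in disk either above or below, giving exactly two isotopy classes in $M_1$; the two are distinct since they meet transversely in one point. If you want to pursue your double-cover route you must replace ``three isotopy classes'' with ``three partition types'' and then prove uniqueness of $\iota$-invariant representatives within a type; if you pursue the cutting route you must supply the monogon/bigon reduction showing $i(c,\gamma_1)=1$. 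Either way, there is a missing step, and the paper's filling-in trick sidesteps both.
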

\begin{proof} 
The M\"obius band $M$ itself contains a unique (up to isotopy) essential simple closed curve which is not boundary parallel, namely, its core $c$,
so any pants curve in  $M_1$ is isotopic to $c$ in $M$. Such a curve in $M_1$ can run either below or above
the boundary of the removed disk (referring to Figure \ref{M1}).  There is a unique such isotopy class in $M_1$ in each case, and these two curves provide the two pants decompositions
of $M_1$.  Notice that these curves meet transversely in a single point.
\end{proof}

\begin{figure}[hbt]%
 \begin{center}
 \includegraphics[width=6cm]{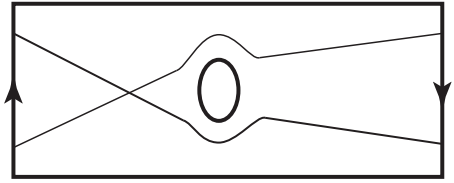}
 \caption{The two essential simple closed curves which are not boundary-parallel in $M_1$, the M\"obius band minus a disk.}
 \label{M1}%
\end{center}
\end{figure}

\begin{lemma}\label{lem:cde} For each embedded 1-sided curve $d$ in $K_1$, there are exactly two
embedded 1-sided curves $c,e$ disjoint from $d$, and furthermore, $c$
and $e$ have geometric intersection one.
\end{lemma}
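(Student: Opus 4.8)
The plan is to cut $K_1$ open along $d$ and invoke Lemma~\ref{lem:2pd}. Let $N(d)\subset K_1$ be a closed regular neighborhood of $d$, which is a M\"obius band since $d$ is $1$-sided, and let $M'=K_1\setminus(\mathrm{int}\,N(d))$. A $1$-sided simple closed curve is non-separating (a collar of a separating curve from either side would exhibit it as $2$-sided), so $M'$ is connected; it has Euler characteristic $\chi(M')=\chi(K_1)=-1$ and exactly two boundary circles, namely $\partial K_1$ and $\partial N(d)$. Since no orientable surface has $(\chi,\#\partial)=(-1,2)$, $M'$ is non-orientable, hence $M'\cong N_{1,2}$, i.e.\ $M'$ is a copy of the surface $M_1$ of Lemma~\ref{lem:2pd}. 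Fix such an identification and regard $M_1$ as a subsurface of $K_1$.

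Next I would reduce the count to Lemma~\ref{lem:2pd}. An embedded $1$-sided curve $\gamma$ of $K_1$ disjoint from $d$ can be isotoped off $N(d)$, hence into $M_1$, and it is still $1$-sided there since $1$-sidedness is a local condition. A $1$-sided simple closed curve is automatically essential (a null-homotopic curve bounds a disk, hence is $2$-sided) and non-boundary-parallel (boundary curves are $2$-sided), so Lemma~\ref{lem:2pd} applies and shows that $\gamma$ is isotopic in $M_1$ — a fortiori in $K_1$ — to one of the two curves of $M_1$, which I call $c$ and $e$; by the same lemma, $c$ and $e$ are $1$-sided and meet transversely in a single point of $M_1$. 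Conversely $c$ and $e$ lie in $M_1=K_1\setminus(\mathrm{int}\,N(d))$ and so are disjoint from $d$. Thus, up to isotopy, the $1$-sided curves of $K_1$ disjoint from $d$ are precisely $c$ and $e$.

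What remains — the only substantive point — is to upgrade the two assertions about $c$ and $e$ from $M_1$ to $K_1$: that $c\not\simeq e$ in $K_1$ (a priori they are known non-isotopic only in $M_1$), and that their geometric intersection number computed in $K_1$ is still $1$. Both follow once one knows that $M_1$ is \emph{incompressible} in $K_1$. This is immediate from van Kampen's theorem, $\pi_1(K_1)=\pi_1(M_1)\ast_{\pi_1(\partial N(d))}\pi_1(N(d))$ with $\pi_1(\partial N(d))\hookrightarrow\pi_1(M_1)$ (a non-null-homotopic boundary curve of $N_{1,2}$), so that $\pi_1(M_1)\hookrightarrow\pi_1(K_1)$; equivalently, the only curve of $M_1$ that could bound a disk on the $N(d)$-side is $\partial N(d)$, which is essential in the M\"obius band $N(d)$. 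Since $M_1$ is an incompressible, indeed essential, subsurface, two of its essential non-boundary-parallel curves that are non-isotopic in $M_1$ remain non-isotopic in $K_1$, so $c\not\simeq e$; and any bigon between $c$ and $e$ in $K_1$ has its boundary in $c\cup e\subset M_1$, hence is disjoint from $d$ and may be pushed into $M_1$, so the single intersection point realized inside $M_1$ is already minimal and $i_{K_1}(c,e)=1$. (Alternatively, distinctness can be checked homologically: $H_1(M_1;\mathbb{Z})\to H_1(K_1;\mathbb{Z})$ is injective by Mayer--Vietoris, while $[c]$ and $[e]$ already differ in $H_1(M_1;\mathbb{Z})$, since passing from one curve of Lemma~\ref{lem:2pd} to the other changes the homology class by a boundary class of $N_{1,2}$.)

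I expect the third paragraph to be the real content: identifying $K_1\setminus N(d)$ with $M_1$ and enumerating the candidate curves is routine Euler-characteristic bookkeeping plus Lemma~\ref{lem:2pd}, whereas one must genuinely argue that filling the interior boundary $\partial N(d)$ back in with a M\"obius band does not identify the two isotopy classes of $1$-sided curves that Lemma~\ref{lem:2pd} supplies.
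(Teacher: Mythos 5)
Your proof follows the same strategy as the paper's: cut $K_1$ along $d$ to obtain $M_1\cong N_{1,2}$ and invoke Lemma~\ref{lem:2pd} to produce the two $1$-sided curves $c,e$. Your third paragraph — using incompressibility (resp.\ the Mayer--Vietoris injection $H_1(M_1)\hookrightarrow H_1(K_1)$) to see that the two $M_1$-isotopy classes stay distinct in $K_1$, and the innermost-disk argument to see that no bigon in $K_1$ can reduce the single intersection point — correctly supplies the justification that the paper's one-sentence proof leaves implicit when it says the curves persist after regluing along $d$.
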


\begin{proof}
Cut $K_1$ on $d$ to produce a M\"obius band minus a disk $M_1$, which contains exactly two embedded 1-sided curves by the previous lemma,
namely, the core of the M\"obius band running either above or below the missing disk.  These two curves meet transversely
in a single point and upon regluling along $d$ give the required curves $c,e$ in $K_1$.  
\end{proof}

\begin{lemma}\label{lem:onlyone}
$K_1$ contains exactly one primitive non-peripheral 2-sided curve we shall denote $c_2$.
\end{lemma}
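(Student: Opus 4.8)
The plan is to establish existence quickly, reduce the uniqueness assertion to a statement about intersections with one-sided curves, and prove that statement by cutting along the curve.

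\emph{Existence and first reduction.} As noted in \S\ref{s:background}, $K_1=N_{2,1}$ (with $g=2=2\cdot 1$, $r=1$) admits a pants decomposition consisting of a single $2$-sided curve $c_2$; equivalently, delete from the closed Klein bottle a disk disjoint from a $2$-sided non-separating curve $c_2$, so that $K_1\setminus c_2$ is an annulus minus a disk, i.e.\ a pair of pants. Since $K_1\setminus c_2$ is connected, $c_2$ is non-separating, hence cannot bound a M\"obius band, hence is primitive; and $c_2$ is non-peripheral, since otherwise $K_1\setminus c_2$ would split off an annulus. It remains to show that every primitive non-peripheral $2$-sided curve $c$ is isotopic to $c_2$. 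Cut $K_1$ along $c$ to get $X$ with $\chi(X)=-1$; an Euler-characteristic and boundary-count argument, using that $c$ is essential, non-peripheral and primitive, excludes the separating case and shows that when $c$ is non-separating $X$ has three boundary circles, so $X\cong F_{0,3}$ is a pair of pants (and $c$ is non-separating). Since $X$ is orientable, $c$ must meet every one-sided curve of $K_1$.

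\emph{Key step: $i(c,\gamma)=1$ for every one-sided curve $\gamma$.} Write $K_1\setminus c=P$, a pair of pants with boundary $A\sqcup B\sqcup\partial_0$, where $A,B$ are the two copies of $c$, reglued by $g\colon A\to B$, and $\partial_0=\partial K_1$. For $\gamma$ in minimal position with $c$, set $k=i(c,\gamma)$; then $\gamma\cap P$ is a disjoint union of essential arcs with all endpoints on $A\sqcup B$, exactly $k$ on each of $A$ and $B$. An essential arc of $P$ with both endpoints on $A$ separates $B$ from $\partial_0$, and similarly with $A,B$ interchanged; these two types cannot occur disjointly, so together with the endpoint count every arc is a parallel copy of the unique essential $A$-$B$ arc. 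Reassembling these $k$ parallel arcs by $g$ into the connected curve $\gamma$ is encoded by a permutation $\pi$ of the $k$ endpoints on $A$, and connectedness of $\gamma$ forces $\pi$ to be a $k$-cycle. However, $\pi$ is the restriction of an orientation-\emph{reversing} homeomorphism of the circle $A$: the parallel-arc matching reverses the boundary orientations induced from an orientation of $P$, whereas $g$ preserves them because $K_1=P/g$ is non-orientable, so their composite reverses. Such a permutation has order at most $2$, so it can be a $k$-cycle only if $k\le 2$; and $k$ is odd, since $\gamma$ is one-sided (one computes $[\gamma]\cdot[c]=1$ in $H_1(K_1;\mathbb{Z}/2)\cong(\mathbb{Z}/2)^2$). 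Hence $k=1$.

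\emph{Conclusion.} Fix a one-sided curve $\gamma$ (these exist as $N_{2,1}$ is non-orientable). By the key step every primitive non-peripheral $2$-sided curve meets $\gamma$ exactly once, so it suffices to show there is a unique such curve. First, no $2$-sided curve meets $c$ exactly once: the curves meeting $c$ once form a single orbit under the Dehn twist $T_c$, all of the same ``sidedness'', and that orbit contains a one-sided curve (by the key step). Hence if $c'$ is another primitive non-peripheral $2$-sided curve then $i(c,c')\ne 1$, while $i(c,c')\le 2$ by the same arc analysis; if $i(c,c')=0$ then $c'\simeq c$. In the remaining case $i(c,c')=2$, cut $K_1$ along a M\"obius neighbourhood of $\gamma$ to obtain $M_1=N_{1,2}$ with new boundary $\gamma^*$; by Lemma~\ref{lem:2pd}, essential arcs from $\gamma^*$ to $\gamma^*$ fall into two isotopy classes, a separating one that closes up to a one-sided curve and a non-separating one that closes up to a $2$-sided curve, so $c$ and $c'$ give isotopic arcs and differ only in how that arc is closed up through the M\"obius neighbourhood of $\gamma$; this closing-up is altered precisely by the $Y$-homeomorphism $Y_{\gamma,c}$, and since $\gamma\cup c$ fills $K_1$ one has $Y_{\gamma,c}^2=T_{\partial K_1}$, which fixes $c$, leaving only the two candidates $c$ and $Y_{\gamma,c}(c)$; the involution of $K_1$ interchanging its two crosscaps fixes $c$ up to isotopy and exchanges these, so $c'\simeq c$.

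\emph{Main obstacle.} The Euler-characteristic bookkeeping and the arc combinatorics in the pair of pants are routine. The delicate points are the orientation bookkeeping that makes $\pi$ orientation-reversing in the key step, and, in the final case, the identification of the two candidates $c$ and $Y_{\gamma,c}(c)$; the latter I expect to be the main difficulty, and it is where an explicit model of $N_{2,1}$, together with Lemma~\ref{lem:2pd} and the two known pants decompositions, is most useful.
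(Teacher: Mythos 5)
Your proof takes a genuinely different route from the paper's: the paper removes M\"obius neighbourhoods of two disjoint $1$-sided curves, puts a candidate $2$-sided curve $c$ in standard position in the resulting pair of pants, and shows that if any arc returns to the same boundary component then $c$ double-covers an embedded $1$-submanifold (contradicting primitivity), so $c$ is forced to be the unique curve with $p=2, q=0$. You instead cut along $c$ itself, and your ``key step'' --- that the regluing permutation $\pi$ extends to an orientation-reversing circle map, hence has order $\le 2$, so $i(c,\gamma)\le 2$ and by the mod-$2$ parity $i(c,\gamma)=1$ for every $1$-sided $\gamma$ --- is a nice, correct observation that is not in the paper. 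The pieces through $i(c,c')\in\{0,2\}$ and the disposal of $i(c,c')=0$ are also fine.

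The gap is in the case $i(c,c')=2$. You invoke Lemma~\ref{lem:2pd} to assert that essential arcs from $\gamma^*$ to $\gamma^*$ in $M_1=N_{1,2}$ fall into exactly two isotopy classes, but Lemma~\ref{lem:2pd} classifies essential non-peripheral \emph{closed curves} in $M_1$, not arcs, and it gives no such statement. I do not believe the arc statement is true as written: properly embedded essential arcs from a boundary circle to itself in a surface of Euler characteristic $-1$ generally form infinitely many isotopy classes (one should expect a $\mathbb{Z}$-family coming from twisting near the boundary), and what you would actually need is a classification of such arcs up to those boundary twists, together with the observation that $T_{\gamma^*}$ is isotopic to the identity \emph{in the ambient $K_1$} because $\gamma^*$ bounds a M\"obius band there --- none of which is stated. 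The subsequent reduction to ``$c$ and $Y_{\gamma,c}(c)$'' and the appeal to the crosscap-swapping involution is likewise a sketch of a plan rather than a proof, and you flag it yourself as the main difficulty. By contrast, the paper never needs to confront $i(c,c')=2$: its pair-of-pants analysis directly converts any ``return'' arc into a double cover and kills it with primitivity, which is why cutting along the $1$-sided curves is the more efficient decomposition here. To repair your version you would need an honest classification of arcs in $N_{1,2}$ modulo the trivial twist along $\gamma^*$, or better, notice (as in the paper) that the curve built from two parallel $A$--$B$ arcs glued by a transposition is always the boundary of a M\"obius neighbourhood of some $1$-sided curve and hence non-primitive, which dispatches $i(c,c')=2$ at once without Lemma~\ref{lem:2pd} or the $Y$-homeomorphism.
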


\begin{proof}
Remove a regular neighborhood of two disjoint 1-sided curves in $K_1$ to produce a pair of pants $P$, 
one boundary component of which arises from the boundary of $K_1$ and the other two of which are non-primitive in $K_1$.
By the Dehn-Thurston Theorem \ref{thm:noDT}, any primitive non-peripheral 2-sided curve $c$ in $K_1$
restricts to standard position in $P$ comprised of $p$ arcs connecting different boundary components
and $q$ arcs connecting one boundary component to itself (and disjoint from the third boundary component of $P$ arising
from the boundary of $K_1$).  Since $c$ is 2-sided, $p$ must be even, and $p=2$, $q=0$
gives the unique 2-sided curve meeting exactly once each of the two 1-sided curves in $K_1$ whose neighborhood we remove.
Furthermore, if $q=0$ and $p>2$, then the resulting one-manifold in $K_1$ is not connected.

Thus, we may suppose that $q\neq 0$.  2-sidedness implies that $p+q$ is even, so $q$ is even since $p$ is.
We may thus pair up consecutive parallel arcs from $c$ and build a new arc family in $P$ where there is one new
arc between each such pair of original consecutive parallel arcs.  Upon regluing to get $K_1$, the new arc family gives rise to an embedded
closed 1-submanifold of $K_1$ that is twice covered by $c$.  See Figure \ref{fig:cover} for an example. In particular, the resulting
1-submanifold is connected since $c$ is, and this contradicts primitivity.

\begin{figure}[hbt]%
 \begin{center}
 \includegraphics[width=6cm]{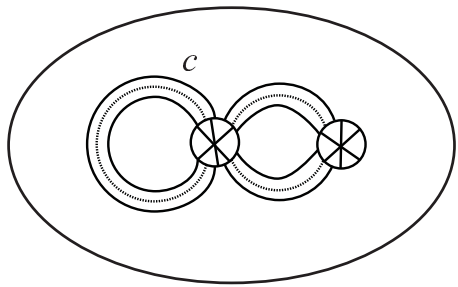}
 \caption{Construction of a 1-sided curve from a non primitive 2-sided curve $c$ in $K_1$ when both $p$ and $q$ are even.}
 \label{fig:cover}%
\end{center}
\end{figure}

\end{proof}

The set of isotopy classes of 1-sided curves in $K_1$ is the set of vertices of a simplicial graph called the \emph{graph of 1-sided curves}, where an edge joins two vertices if and only if the vertices can be represented by disjoint curves.

\begin{corollary}\label{cor:K1cpx}
The graph of 1-sided curves in $K_1$ is isomorphic as a simplicial complex to the real line with vertices given by the integers, and the Dehn twist on the unique primitive non-peripheral 2-sided curve $c_2$ (see Lemma \ref{lem:onlyone}) acts by translation by one on this graph.
\end{corollary}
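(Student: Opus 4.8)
\emph{Proof plan.} The starting point is Lemma~\ref{lem:cde}: it says precisely that every vertex of the graph $\Gamma$ of $1$-sided curves in $K_1$ has degree exactly two, so $\Gamma$ is a disjoint union of bi-infinite simplicial lines and finite simplicial cycles. To prove the corollary it therefore suffices to show (i) that $\Gamma$ is connected, (ii) that it contains no cycle, and (iii) that the Dehn twist $T=T_{c_2}$ acts on it as a translation by one.

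For (iii), which will be the technical heart, I would fix an edge of $\Gamma$, i.e. a pair $\{d,d'\}$ of disjoint $1$-sided curves, and cut $K_1$ along regular neighbourhoods $N(d),N(d')$ to obtain a pair of pants $P$ whose three boundary circles are $\partial K_1$ and the boundaries $\partial N(d),\partial N(d')$ of the two M\"obius bands. By Lemma~\ref{lem:onlyone}, $c_2$ is, up to isotopy, the curve meeting $P$ in the ``$p=2$, $q=0$'' arc pair joining $\partial N(d)$ to $\partial N(d')$, completed by the essential arcs in the two M\"obius bands; in particular $c_2$ meets each of $d$ and $d'$ transversely. Working directly with this picture --- equivalently, using the description $Y=T_{c_2}\circ\sigma$ of the $Y$-homeomorphism, where $\sigma$ interchanges the two cross caps --- one checks that $T_{c_2}$ carries $d$ to a $1$-sided curve disjoint from $d$, i.e. to a neighbour of $d$ in $\Gamma$, and that this assignment shifts the whole line-component of $d$ one step in a fixed direction. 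Since a regular neighbourhood of $d\cup c_2$ is all of $K_1$ (this is exactly the configuration in the definition of the $Y$-homeomorphism), the curves $d$ and $c_2$ fill $K_1$, so the geometric intersection numbers $i(T_{c_2}^{\,m}(d),d)$ are unbounded in $m$; hence the $T_{c_2}^{\,m}(d)$ are pairwise non-isotopic, the component of $d$ in $\Gamma$ is infinite, and $T_{c_2}$ restricted to it is a fixed-point-free graph automorphism moving each vertex to an adjacent one, i.e. translation by $\pm1$; after relabelling if necessary, by $+1$. This gives (ii) for the component of $d$, and it will give (ii) in general once we have (i).

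For (i), I would invoke the change-of-coordinates principle: the complement in $K_1$ of any $1$-sided curve is the surface $M_1$ (whose curves are governed by Lemma~\ref{lem:2pd}), so the mapping class group of $K_1$ acts transitively on the vertices of $\Gamma$. By the results of Lickorish \cite{Lickorish65} and Chillingworth \cite{Chillingworth1969}, this mapping class group is generated, modulo the twist $T_{\partial K_1}$ along the boundary (which fixes every $1$-sided curve and so acts trivially on $\Gamma$), by $T_{c_2}$ together with a $Y$-homeomorphism, equivalently by $T_{c_2}$ and the cross-cap interchange $\sigma$. Both $T_{c_2}$ and $\sigma$ map the line-component of $\Gamma$ containing $d$ to itself --- $T_{c_2}$ by the previous paragraph, and $\sigma$ because $\sigma(d)$ is a $1$-sided curve disjoint from $d$, hence a neighbour of $d$ --- so the entire mapping-class-group orbit of $d$, which is all of $\Gamma$, lies in that one component. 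Thus $\Gamma$ is a single bi-infinite line, and together with (iii) this identifies $\Gamma$ with the real line with integer vertices and $T_{c_2}$ with translation by one. (Alternatively, (i) can be obtained more self-containedly by induction on geometric intersection number, cutting along one of two $1$-sided curves to reduce its intersection with the other via Lemma~\ref{lem:2pd}.)

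The main obstacle is the explicit computation in the second paragraph: showing that $T_{c_2}$ sends a $1$-sided curve to a \emph{disjoint} $1$-sided curve --- a non-orientable phenomenon with no analogue for a twist about a curve meeting a simple closed curve once on an orientable surface --- and that the resulting shift is by exactly one step and in a consistent direction. The cleanest route is to carry out the picture in the pair of pants $P$ and in the Klein bottle minus a disk supporting the $Y$-homeomorphism (Figure~\ref{fig:Yhomeo}), keeping careful track of the two M\"obius bands.
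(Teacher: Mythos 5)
Your proof is correct in substance and considerably more careful than the paper's own, which reads in full: ``The first part follows immediately from Lemma~\ref{lem:cde}, and the last part is illustrated in Figure~\ref{fig:trans}.'' You rightly point out that Lemma~\ref{lem:cde} only gives that every vertex of $\Gamma$ has degree two, so that a priori $\Gamma$ is a disjoint union of bi-infinite lines and finite circles, and one still has to rule out finite cycles and disconnectedness --- two points the paper elides. Your route to acyclicity (that $c_2$ and $d$ fill $K_1$, hence $i(T_{c_2}^m(d),d)\to\infty$ by the standard Dehn-twist growth estimate, hence the $T_{c_2}^m(d)$ are pairwise non-isotopic) is sound and is exactly the right strength: merely knowing $T_{c_2}$ has infinite order would not rule out a periodic orbit of the vertex $d$. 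Your route to connectedness, via transitivity of the mapping class group of $K_1$ on one-sided curves together with a Lickorish--Chillingworth generating set, is a genuine departure from the paper: it imports a structural fact about the mapping class group of $K_1$ whose precise statement in the bordered setting you do not pin down, whereas the parenthetical alternative you mention --- an innermost-petal induction on $i(e,c\cup d)$ after cutting along one of $c,d$ and invoking Lemma~\ref{lem:2pd} --- is exactly the kind of reduction the paper's Lemma~\ref{lem:self} is designed to run, and carrying that out would make your proof self-contained within the paper's own lemmas. The remaining ``one checks'' step, that $T_{c_2}$ moves each vertex to an adjacent vertex and in a consistent direction, is at precisely the same level of rigor as the paper's appeal to Figure~\ref{fig:trans}, so neither treatment has an edge there; your observations that a regular neighborhood of $d\cup c_2$ is all of $K_1$ and that $T_{c_2}=Y\circ\sigma^{-1}$ are the right tools for making that picture precise.
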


\begin{proof}  The first part follows immediately from Lemma \ref{lem:cde}, and the last part is illustrated
in Figure \ref{fig:trans}.
\end{proof}

\begin{figure}[hbt]%
 \begin{center}
 \includegraphics[width=8cm]{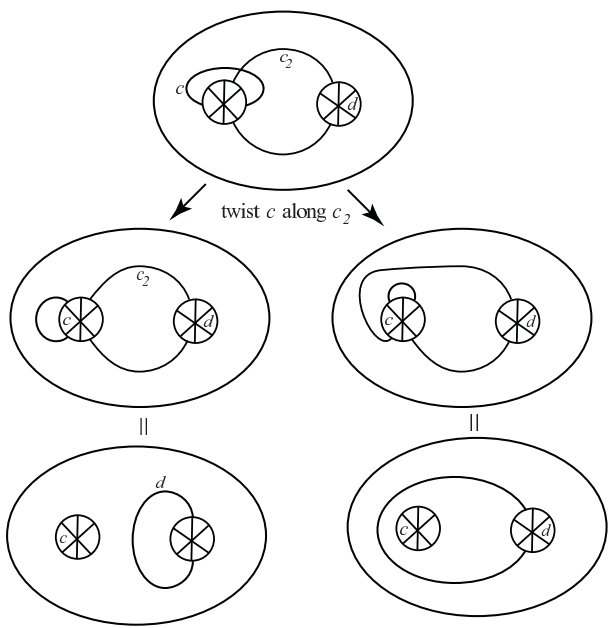}
 \caption{The effect on $c$ of the right and left Dehn twists along $c_2$ in $K_1$.  One twist produces $d$ whereas the other
 arises from move IV replacing $d$ by the illustrated curve.}
 \label{fig:trans}%
\end{center}
\end{figure}

\begin{proposition}\label{prop:K1HT} 
Finite compositions of the two moves III and IV depicted in Figure \ref{fig:elementary} act transitively on pants decompositions of $K_1$.
\end{proposition}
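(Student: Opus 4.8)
The plan is to enumerate all pants decompositions of $K_1$, observe that they fall into two combinatorial types, show that Move~III acts transitively within the larger type, and use Move~IV to bridge to the remaining decomposition.

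First I would count. Since $-\chi(K_1)=1$, any pants decomposition ${\mathcal P}={\mathcal P}_1\cup{\mathcal P}_2$ of $K_1$ cuts it into exactly one pair of pants; matching the three boundary circles of that pair of pants against the single boundary circle of $K_1$ and the gluings forces $\#{\mathcal P}_1+2\#{\mathcal P}_2=2$. Hence either $(\#{\mathcal P}_1,\#{\mathcal P}_2)=(0,1)$, in which case ${\mathcal P}=\{c_2\}$ is the unique primitive non-peripheral $2$-sided curve of Lemma~\ref{lem:onlyone}, so there is exactly one such decomposition; or $(\#{\mathcal P}_1,\#{\mathcal P}_2)=(2,0)$, in which case ${\mathcal P}$ is a pair of disjoint $1$-sided curves, i.e.\ by Corollary~\ref{cor:K1cpx} an edge $\{n,n+1\}$ of the graph of $1$-sided curves, which is identified with ${\mathbb R}$ with integer vertices. (That each such edge really is a pants decomposition follows by cutting $K_1$ along one of the curves to obtain $M_1$, cf.\ Lemma~\ref{lem:cde}, and recalling from Lemma~\ref{lem:2pd} that the remaining curve is then one of the two pants decompositions of $M_1$.) So the pants decompositions of $K_1$ are precisely $\{c_2\}$ together with the edges $\{n,n+1\}$, $n\in{\mathbb Z}$.

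Next I would read off the effect of the two moves (Figure~\ref{fig:elementary}). Given an edge $\{b,b'\}$, cutting $K_1$ along $b$ yields a M\"obius band minus a disk in which $b'$ is one of the two non-peripheral $1$-sided curves furnished by Lemma~\ref{lem:2pd}; Move~III is exactly the exchange of those two curves, so on ${\mathbb R}$ it carries $\{n,n+1\}$ to $\{n-1,n\}$ or to $\{n+1,n+2\}$ according to which curve is kept fixed (compare Figure~\ref{fig:trans}). Thus finite compositions of Move~III realize every translation of the integer vertices and so act transitively on the edge-type decompositions. Move~IV, applied to such an edge, replaces its two $1$-sided curves by a primitive non-peripheral $2$-sided curve, which can only be $c_2$ by Lemma~\ref{lem:onlyone}; being reversible, Move~IV therefore relates $\{c_2\}$ with some edge $\{n_0,n_0+1\}$. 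Assembling: any pants decomposition of $K_1$ is joined by (at most one) Move~IV to some $\{n_0,n_0+1\}$ and then by a sequence of Move~III to any prescribed $\{n,n+1\}$, proving transitivity.

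\textbf{Main obstacle.} The hard part is not an estimate but the bookkeeping that pins down the combinatorial meaning of the pictured moves on this particular surface: one must verify that Move~III really is the flip between the two pants decompositions of $M_1$ (hence a $\pm1$ translation of the graph of Corollary~\ref{cor:K1cpx}, as also suggested by Figure~\ref{fig:trans}) and that Move~IV genuinely interchanges a two-$1$-sided-curve decomposition with $\{c_2\}$. Once these identifications are in place, together with the Euler-characteristic enumeration above, transitivity is immediate.
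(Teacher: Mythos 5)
Your argument is correct and is essentially the paper's own proof: both identify pants decompositions of $K_1$ as either edges of the 1-sided curve graph of Corollary~\ref{cor:K1cpx} or the singleton $\{c_2\}$ of Lemma~\ref{lem:onlyone}, use Move~III to walk between edges, and use Move~IV to connect to $\{c_2\}$. Your Euler-characteristic enumeration makes the classification of pants decompositions explicit (which the paper leaves implicit), and you correctly observe that a single Move~III already carries $\{n,n+1\}$ to $\{n\pm 1, n\}$ or $\{n+1,n+2\}$ -- the paper's phrase ``two consecutive moves of type III'' for a unit translation of the graph is an inessential discrepancy that does not affect the conclusion.
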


\begin{proof}
A pants decomposition comprised of two 1-sided curves corresponds precisely to a 1-simplex in the graph of 1-sided curves
in $K_1$ and is described in Corollary \ref{cor:K1cpx}.  As already depicted in Figure \ref{fig:trans}, translation by one in this 1-sided curve complex
is achieved by two consecutive moves of type III, so finite compositions of  moves of type III indeed act transitively on
pants decompositions in $K_1$ comprised of 1-sided curves.  According to Lemma \ref{lem:onlyone}, the move IV applied to
any such pants decomposition produces the curve $c_2$, namely, the unique pants decomposition containing a 2-sided
curve.
\end{proof}

One of our basic inductive tools is provided by:

\begin{lemma}\label{lem:self}
Given three distinct 1-sided curves $c,d,e$ in $K_1$, where $c$ and $d$ are disjoint and $e$ meets $c\cup d$ minimally,
there is a closed sub-arc of $e$ whose interior is disjoint from $c\cup d$ and whose endpoints
lie on a common component $c$ or $d$.
\end{lemma}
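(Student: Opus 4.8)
The plan is to analyze the three curves using the combinatorial picture from Lemma~\ref{lem:cde} and its corollary. First I would recall that, by Lemma~\ref{lem:cde}, cutting $K_1$ along the curve $d$ produces a M\"obius band minus a disk $M_1$ in which, by Lemma~\ref{lem:2pd}, there are exactly two essential non-peripheral simple closed curves, namely $c$ and $e$ (after re-gluing), and these meet transversely in a single point; similarly cutting along $c$ produces such an $M_1$ containing $d$ and $e$. So in particular $e$ meets each of $c$ and $d$ in exactly one point, and $e$ meets $c\cup d$ in exactly two points. The arc $e$ is thus divided by these two intersection points into two closed sub-arcs whose interiors are disjoint from $c\cup d$. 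The content of the lemma is that at least one of these two sub-arcs has both endpoints on the \emph{same} one of the two curves $c$ or $d$; equivalently, it is \emph{not} the case that both sub-arcs of $e$ run from $c$ to $d$.

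The key step is to rule out the configuration in which both sub-arcs of $e$ join $c$ to $d$. Suppose for contradiction that $e\cap(c\cup d)$ consists of one point $x\in e\cap c$ and one point $y\in e\cap d$, and that the two sub-arcs $e_1,e_2$ of $e$ from $x$ to $y$ each have interior disjoint from $c\cup d$. I would argue that in this case $c\cup d\cup e$ would have a regular neighborhood with too-large complexity, or, more cleanly, that it would force $c$ and $d$ to be isotopic or to have intersection number one, contradicting the hypothesis that $c,d$ are disjoint and distinct. The cleanest route is via the cut surface: cut $K_1$ along $c\cup d$. Since $c$ and $d$ are disjoint 1-sided curves with connected complement (they extend to a pants decomposition of $K_1$, which has $\#\mathcal{P}_1 = g = 2$), the result of cutting along $c\cup d$ is a single planar surface, namely a pair of pants $P$ (this is exactly the surface appearing in the proof of Lemma~\ref{lem:onlyone}), two of whose boundary components are double covers of $c$ and $d$ and the third of which is $\partial K_1$. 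Under this cutting, the curve $e$ becomes a disjoint union of arcs properly embedded in $P$, and the hypothesis that both sub-arcs of $e$ join $c$ to $d$ says that $e$ becomes a \emph{single} arc in $P$ running between the $c$-boundary and the $d$-boundary and disjoint from the $\partial K_1$-boundary. But then, re-gluing, $e$ would intersect $c$ and $d$ each in exactly one point and would be determined; and I would check, using Corollary~\ref{cor:K1cpx} (the graph of 1-sided curves is the line $\mathbb{Z}$ with $c,d$ adjacent vertices, say $0$ and $1$, and $e$ a third vertex adjacent to one of them), that the only 1-sided curves meeting $c\cup d$ exactly twice and arising from a single $c$-to-$d$ arc in $P$ are $c$ and $d$ themselves, not a third curve $e$. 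This is the contradiction.

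I expect the main obstacle to be making the case analysis in $P$ rigorous without invoking pictures: one must enumerate the finitely many isotopy classes of essential arcs in the pair of pants $P$ with endpoints on the prescribed boundary components, track how each re-glues across the $c$- and $d$-identifications to give a 1-sided curve in $K_1$, and verify against Lemma~\ref{lem:cde} and Corollary~\ref{cor:K1cpx} which such curves are genuinely new. An alternative, perhaps shorter, approach avoiding arc enumeration is purely combinatorial: regard $c,d,e$ as three vertices of the line-graph of Corollary~\ref{cor:K1cpx}; since $c,d$ are disjoint they are consecutive integers, and $e$ is some other integer; the hypothesis that $e$ meets $c\cup d$ minimally plus the known fact that consecutive integers in this graph are represented by curves meeting once forces a sub-arc of $e$ with both endpoints on whichever of $c,d$ is closer to $e$ in the graph — and one shows the "both endpoints on one curve" conclusion is equivalent to $e$ not being "between" $c$ and $d$, which is automatic since $c,d$ are adjacent and $e$ is a distinct vertex. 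I would present the cut-surface argument as the main line and mention the graph-theoretic reformulation as a remark.
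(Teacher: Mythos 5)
Your proof contains a fatal misreading of Lemma~\ref{lem:cde}. That lemma says: for a \emph{fixed} 1-sided curve $d$, there are exactly two 1-sided curves \emph{disjoint} from $d$, and those two happen to meet once. It does \emph{not} say that an arbitrary third 1-sided curve $e$ meets $d$ in at most one point. In the situation of Lemma~\ref{lem:self}, $e$ is any 1-sided curve distinct from $c,d$; by Corollary~\ref{cor:K1cpx} the 1-sided curves of $K_1$ form a line $\mathbb{Z}$ on which the Dehn twist along $c_2$ acts by translation, so $e$ may be an arbitrarily distant vertex from the adjacent pair $\{c,d\}$ and its geometric intersection with $c\cup d$ is unbounded, not $2$. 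Your entire plan after the opening paragraph — ``$e$ meets $c\cup d$ in exactly two points, so $e$ is divided into exactly two sub-arcs'' — therefore rests on a false premise, and the subsequent case analysis (ruling out the one bad configuration of two $c$-to-$d$ arcs) does not cover the actual lemma. The ``graph-theoretic reformulation'' you sketch at the end inherits the same gap: adjacency in the curve graph tells you only which curves are disjoint, not that non-adjacent ones meet exactly once.

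That said, the surgery you use — cutting $K_1$ along $c\cup d$ to obtain the pair of pants $P$ of Lemma~\ref{lem:onlyone}, putting $e$ in standard position, and examining the arcs — is exactly the paper's framework. What you are missing is the parity argument that makes it work for arbitrary intersection number. Write $e\cap P$ as $p\geq 0$ arcs from the $c$-boundary to the $d$-boundary and $q\geq 0$ arcs returning to a single boundary component; the lemma is the statement $q\neq 0$. If $q=0$, then all $p$ endpoints of $e$ on the $c$-boundary (resp.\ $d$-boundary) are paired off by the antipodal identification, so $p$ is even and hence $e$ is connected only if $p>0$ is even; but since $q=0$, the number of times $e$ crosses $c\cup d$ is $p$, and $e$ being 1-sided forces $p$ to be odd. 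This contradiction is the whole proof. Your version of the argument handles only $p\leq 2$, and even there the final step (``a single $c$-to-$d$ arc regules to give $c$ or $d$'') is both imprecise and unnecessary once the parity observation is in hand.
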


\begin{proof}
As in the proof of Lemma \ref{lem:onlyone}, remove a regular neighborhood of $c\cup d$ to produce a pair of pants and consider
the restriction of $e$ to this pair of pants in its standard position comprised of $p\geq 0$ arcs connecting $c$ to $d$ and $q\geq 0$ arcs
running from one component $c$ or $d$ back to itself.  We must show $q\neq 0$, and in the contrary case, $p$ must be even since
$e$ is a closed curve while $p$ must be odd in order that $e$ is 1-sided.  This contradiction establishes $q\neq 0$ as required.
\end{proof}

\section{Generalized Hatcher-Thurston Theorem}

Throughout this section, $S$ will denote a compact possibly non-orientable surface of negative Euler characteristic.
Consider the four combinatorial moves on pants decompositions illustrated in Figure \ref{fig:elementary}.  In each case, each of
the boundary curves might bound a M\"obius band (e.g., graphically, may perhaps be drawn containing an asterisk)  and must retain this attribute before and after the move.  The moves
are to be applied on any subsurface of $S$ bounded by pants curves.  

Recall that ${\mathcal P}_1$ denotes the set of 1-sided curves in the pants decomposition ${\mathcal P}$ of $S$.  The pants decomposition ${\mathcal P}$ is said to be {\it orientable} if $S-\cup{\mathcal P}_1$
is an orientable surface.  
\begin{lemma} \label{lem:opd} For any pants decomposition of a possibly non-orientable surface, there is a finite sequence of moves I--IV
resulting in
an orientable pants decomposition.
\end{lemma}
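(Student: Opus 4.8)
The plan is to reduce the non-orientability of $S-\cup{\mathcal P}_1$, if any, by applying moves that decrease the number of cross caps ``spread out'' among the complementary pieces. First I would observe that $S-\cup{\mathcal P}_1$ is a disjoint union of pairs of pants, each of which is orientable; the failure of orientability of $S-\cup{\mathcal P}_1$ as a whole comes entirely from how these pants are glued to one another along the $2$-sided curves of ${\mathcal P}_2$, i.e., from orientation-reversing gluings. Equivalently, one can detect non-orientability of $S-\cup{\mathcal P}_1$ by a closed loop crossing the curves of ${\mathcal P}_2$ an odd number of times (counted with the orientation-reversal parity of each crossing). The goal is to modify ${\mathcal P}$ by elementary moves so that no such loop exists, i.e., so that $S-\cup{\mathcal P}_1$ becomes orientable.

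The key idea is to use Move~IV to trade pairs of $1$-sided curves for $2$-sided curves and thereby ``absorb'' the non-orientability into ${\mathcal P}_1$, together with Moves~I and II to bring the relevant curves into the standard local configuration in which Move~IV applies. Concretely, suppose $S-\cup{\mathcal P}_1$ is non-orientable. Then there is at least one $2$-sided curve $c\in{\mathcal P}_2$ that is glued in an orientation-reversing way relative to a chosen local orientation of the adjacent pants; one sees this because an orientation of $S-\cup{\mathcal P}_1$ would be equivalent to a coherent choice of orientations on all the complementary pants, which is obstructed exactly by such a curve. I would then locate, in a neighborhood of such a $c$ together with the adjacent pants and the $1$-sided curves they border, a subsurface on which Move~IV (or the configuration prerequisite for Move~IV, reached first by Moves~I and II) can be performed; performing it replaces either two $1$-sided pants curves by a $2$-sided one, or a $2$-sided curve by two $1$-sided ones, in such a way that the total obstruction carried by ${\mathcal P}_2$ strictly decreases. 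Iterating, after finitely many moves $S-\cup{\mathcal P}_1$ is orientable.

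To make ``the obstruction strictly decreases'' into a genuine induction, I would introduce a non-negative integer invariant of a pants decomposition --- for instance, the first Betti number with ${\mathbb Z}/2$ coefficients of the ``orientation double cover defect,'' or more concretely the rank of the parity cocycle on the dual graph of ${\mathcal P}_2$ recording orientation-reversing gluings --- and check that each application of the relevant move decreases it by one while leaving the decomposition a valid pants decomposition. A cleaner route, which I would use if the bookkeeping above gets unwieldy, is to induct on $\#{\mathcal P}_1$ itself after first arranging (using the remark in \S\ref{s:background} that a pants decomposition with $p_1\geq 2$ one-sided curves yields one with $p_1-2$ one-sided and $p_2+1$ two-sided curves) a normal form, and then to invoke Proposition~\ref{prop:K1HT} and the analysis of $K_1$ in \S5 to handle the local model where the non-orientability is concentrated on a Klein-bottle-minus-a-disk piece.

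The main obstacle I anticipate is the second one: verifying that the subsurface on which one wants to apply Move~IV is actually bounded by pants curves in the \emph{current} decomposition and is of the right homeomorphism type ($K_1$ or a pair of pants together with adjacent cross caps), and that when it is not, finitely many applications of Moves~I and II suffice to bring it into that form without increasing the chosen complexity. In other words, the delicate point is not the existence of a local non-orientable configuration --- that is forced by a parity/homology count --- but the \emph{surgery-theoretic} step of isotoping the offending curves into the standard position of Figure~\ref{fig:elementary} for Move~IV, which requires a careful innermost-arc / minimal-position argument of the kind used in Lemmas~\ref{lem:onlyone} and \ref{lem:self}. I would organize that step by cutting along ${\mathcal P}_1$, working in the resulting orientable surface-with-boundary where classical Hatcher--Thurston-type normal-form arguments apply, and then tracking how the gluings along ${\mathcal P}_1$ interact with the moves.
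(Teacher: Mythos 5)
Your high-level strategy matches the paper's: non-orientability of $S-\cup{\mathcal P}_1$ comes from $2$-sided pants curves whose gluings are orientation-reversing, and these should be eliminated by Move~IV after positioning via Moves~I and II. But the proposal contains a genuine gap and a circularity. The closing suggestion to ``cut along ${\mathcal P}_1$ and work in the resulting orientable surface'' is precisely what fails: when $S-\cup{\mathcal P}_1$ is non-orientable --- which is the case at issue --- that cut does \emph{not} yield an orientable surface. (Take ${\mathcal P}=\{c_2\}$ on $N_{2,1}$, where $c_2$ is the unique primitive non-peripheral $2$-sided curve; then ${\mathcal P}_1=\emptyset$ and cutting along ${\mathcal P}_1$ does nothing.) The paper supplies the missing mechanism by building the dual graph $G({\mathcal P})$, choosing a maximal tree, and cutting $S$ along both ${\mathcal P}_1$ \emph{and} the orientation-reversing $2$-sided curves corresponding to non-tree edges. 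The resulting connected surface $\Sigma$ really is orientable, and each offending $2$-sided curve shows up there as a \emph{pair} $\{c,d\}$ of boundary circles. Classical Hatcher--Thurston inside $\Sigma$ then brings $c$ and $d$ onto a common complementary pair of pants with third boundary $e$; regluing $c$ to $d$ exhibits $K_1=N_{2,1}\subset S$ bounded by $e$, where Move~IV replaces the bad curve by two $1$-sided curves. Termination is from $\#{\mathcal P}_1\le g$.

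There is also a directional muddle worth flagging: you open by saying Move~IV should ``trade pairs of $1$-sided curves for $2$-sided curves,'' and the remark from \S\ref{s:background} you later invoke also goes in the direction $p_1\mapsto p_1-2$; but for this lemma the move must go the other way, converting a $2$-sided curve into two $1$-sided ones and thereby \emph{increasing} $\#{\mathcal P}_1$ at each step. Your later hedge (``either\ldots or'') recovers in spirit, but your proposed complexity (decreasing ``obstruction carried by ${\mathcal P}_2$'') is never pinned down, whereas the paper's count $\#{\mathcal P}_1\le g$ is what actually terminates the induction. You correctly identify where the real work lies --- isotoping the offending curve into the Move-IV configuration --- but the proposal stops short of the device (maximal tree plus Hatcher--Thurston in the orientable cut-open $\Sigma$) that carries out that step.
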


\begin{proof}  Given a pants decomposition ${\mathcal P}$ of $S$, let us build the usual corresponding ``dual'' graph  (i.e., 1-dimensional CW complex) $G=G({\mathcal P})$ with one trivalent vertex for each complementary pair of pants and one edge for each pants curve connecting vertices corresponding to pairs of pants on the two sides of a 2-sided pants curve.  In our case, a boundary component gives rise to a univalent vertex, and a 1-sided curve gives rise to a univalent vertex marked with an X; see Figure \ref{fig:ptog} for an example.

\begin{figure}[hbt]%
 \begin{center}
 \includegraphics[width=10cm]{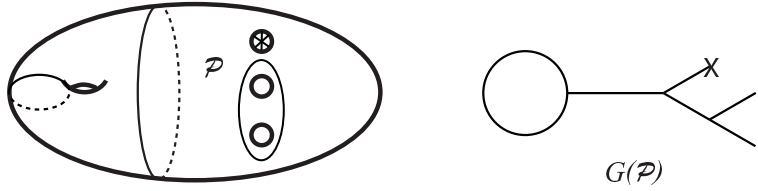}
 \caption{Example of a graph $G({\mathcal P})$ associated to a pants decomposition ${\mathcal P}$ of $N_{3,2}$.}
 \label{fig:ptog}%
\end{center}
\end{figure}

Choose a maximal tree $T$ in $G$ and choose an orientation
on the associated open subsurface  determined as the complement of the edges (i.e., pants curves) $E$ of $G$ that are not in $T$.
Each $e\in E$ determines a vertex in $G$ which is represented by a curve in $S$ which may be 1- or 2-sided, and in the 
latter case, we may add $e$ to $T$ and preserve
orientability of the corresponding surface.

Thus, the connected subsurface $\Sigma\subseteq S$ complementary to a neighborhood of the pants curves in $\{ e\in E:\Gamma_e~{\rm is~1-sided}\}\cup{\mathcal P}_1$
is orientable.  The boundary components of $\Sigma$ are either cross caps (duals marked with an X), boundary components of $S$ itself,  or occur in pairs $\{ c,d\}$ which are identified so as to reverse the orientation on $\Sigma$.  By the Hatcher-Thurston Theorem in the orientable surface $\Sigma$, we may arrange using moves I and II that there is a complementary pair of pants bounded by $c, d\subset\partial\Sigma$ and another pants curve $e$, which thus separates off a subsurface $K_1=N_{2,1}$.  Applying the move IV in this surface $K_1$ replaces one 2-sided curve in ${\mathcal P}$ by two 1-sided curves in a new pants decomposition ${\mathcal P}'$ with ${\mathcal P}_1'={\mathcal P}_1+2$.
There is an upper bound of $g$ to the number of cross caps on a surface $N_{g,r}$, and this procedure thus terminates with an orientable pants decomposition.
\end{proof}

\begin{lemma}\label{lem:1or2}
For any pants decomposition of a possibly non-orientable surface, there is a finite sequence of moves I--IV producing 
an orientable one ${\mathcal P}$ with
$\#{\mathcal P}_1\leq 2$.
\end{lemma}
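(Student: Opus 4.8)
The plan is to iterate the cross-cap–creating/merging procedure of Lemma \ref{lem:opd} and then collapse the number of $1$-sided curves modulo $2$ using Move IV, so that in the end only the genus-parity obstruction remains. First I would apply Lemma \ref{lem:opd} to reduce to the case of an orientable pants decomposition ${\mathcal P}$, so $S-\cup{\mathcal P}_1$ is an orientable surface; in particular the curves of ${\mathcal P}_1$ are pairwise disjoint $1$-sided curves and each is dual to a univalent ``X'' vertex in the graph $G({\mathcal P})$. If $\#{\mathcal P}_1\leq 2$ we are done, so assume $\#{\mathcal P}_1\geq 3$ and show we can strictly decrease it (by $2$) while keeping the pants decomposition orientable, using only moves I--IV.

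The key step is a local one: pick any two of the $1$-sided curves, say $b_1,b_2\in{\mathcal P}_1$, together with an embedded arc $\alpha$ joining them whose interior is disjoint from $\cup{\mathcal P}$. A regular neighborhood of $b_1\cup b_2\cup\alpha$ is a copy of $K_1=N_{2,1}$, bounded by a single $2$-sided curve $e$; because ${\mathcal P}$ is orientable, $e$ is primitive and non-peripheral inside this $K_1$, so $e$ is exactly the curve $c_2$ of Lemma \ref{lem:onlyone}. The pair $\{b_1,b_2\}$ is then a pants decomposition of $K_1$ consisting of two $1$-sided curves, and by Proposition \ref{prop:K1HT} (or directly by the single Move IV replacing two $1$-sided curves by a $2$-sided curve, as in Figure \ref{fig:elementary}, Move IV) we may apply one Move IV supported on this $K_1$ to replace $b_1,b_2$ by the $2$-sided curve $c_2$, obtaining a new pants decomposition ${\mathcal P}''$ with $\#{\mathcal P}''_1=\#{\mathcal P}_1-2$. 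The point is that this move is supported in $K_1$, which was chosen inside the region where ${\mathcal P}$ was orientable, so the ambient surface $S-\cup{\mathcal P}''_1$ is obtained from the orientable surface $S-\cup{\mathcal P}_1$ by reattaching an orientable piece along orientation-compatible gluings, hence is still orientable: ${\mathcal P}''$ is again an orientable pants decomposition. Since $\#{\mathcal P}_1$ strictly decreased, induction on $\#{\mathcal P}_1$ finishes: after finitely many such steps we reach an orientable pants decomposition with $\#{\mathcal P}_1\in\{0,1\}$ if $g$ is even and $\#{\mathcal P}_1\in\{1,2\}$, i.e.\ in all cases $\#{\mathcal P}_1\leq 2$ (the residue $\#{\mathcal P}_1\bmod 2$ is forced to equal $g\bmod 2$, which is why one cannot in general push below $2$).

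I expect the main obstacle to be verifying carefully that the Move IV performed inside the chosen $K_1$ preserves orientability of the complement of the $1$-sided locus, i.e.\ that replacing $\{b_1,b_2\}$ by $c_2$ does not introduce a new orientation-reversing loop elsewhere in $S$. This is a gluing/homology bookkeeping argument: one checks that the two $1$-sided curves $b_1,b_2$ together account for exactly the same $\mathbb{Z}/2$-cohomology class $w_1$ restricted to $K_1$ as does reassembling $K_1$ with its orientation-reversed boundary identification, so the mod-$2$ orientation class of $S-\cup{\mathcal P}''_1$ agrees with that of $S-\cup{\mathcal P}_1$. One must also make the harmless observation that such a pair $b_1,b_2$ and connecting arc $\alpha$ disjoint from $\cup{\mathcal P}$ always exist when $\#{\mathcal P}_1\geq 3$ (indeed when $\#{\mathcal P}_1\geq 2$), which is immediate from connectivity of $S$ and the fact that the $1$-sided curves are disjoint. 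Everything else — that one application of Move IV drops $\#{\mathcal P}_1$ by $2$, and the termination of the induction — is routine given Lemma \ref{lem:opd}, Lemma \ref{lem:onlyone} and Proposition \ref{prop:K1HT}.
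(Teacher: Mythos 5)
Your proposal runs aground at the central step of replacing two $1$-sided curves by a single $2$-sided curve via a single Move~IV, and both difficulties are exactly the ones the paper's proof is designed to avoid.

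\textbf{Gap 1: the connecting arc $\alpha$ need not exist.} An embedded arc from $b_1$ to $b_2$ whose interior misses $\cup{\mathcal P}$ exists if and only if $b_1$ and $b_2$ are boundary components of a \emph{common} complementary pair of pants. This is not automatic for $\#{\mathcal P}_1\geq 3$, and your appeal to ``connectivity of $S$'' does not supply it. For instance, take on $N_{5,0}$ three pairs of pants $P_1,P_2,P_3$, glue them in a cycle along $2$-sided curves $e_1,e_2,e_3$ (so that $P_1\cup P_2\cup P_3=F_{1,3}$), and cap each remaining boundary with a cross-cap $b_i$; this is an orientable pants decomposition with $\#{\mathcal P}_1=3$ in which no two $b_i$ bound a common pair of pants, so no such $\alpha$ is available for any pair. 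The paper handles this by first invoking the \emph{classical} Hatcher--Thurston theorem inside the orientable surface $S-\cup{\mathcal P}_1$, i.e.\ using moves I and II to bring the relevant $1$-sided boundary circles into a common subsurface before any move involving them is applied. This step is essential and is missing from your argument.

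\textbf{Gap 2: Move IV on a $K_1$ destroys orientability.} After replacing $\{b_1,b_2\}$ by $c_2$, the new $1$-sided locus is $\cup{\mathcal P}''_1=\cup\bigl({\mathcal P}_1\setminus\{b_1,b_2\}\bigr)$, so
$S-\cup{\mathcal P}''_1$ is $(S-\cup{\mathcal P}_1)$ with the M\"obius band neighborhoods of $b_1$ and $b_2$ glued back in; this surface now contains two cross-caps and is therefore non-orientable. Your justification that ``$K_1$ was chosen inside the region where ${\mathcal P}$ was orientable'' is not correct: $K_1$ contains $b_1,b_2\in{\mathcal P}_1$ and is itself non-orientable, so it does not sit inside $S-\cup{\mathcal P}_1$. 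Consequently ${\mathcal P}''$ is \emph{not} orientable, your inductive invariant is broken, and you cannot simply iterate: re-applying Lemma~\ref{lem:opd} to restore orientability may raise $\#{\mathcal P}_1$ again, so termination is not established. This is precisely why the paper brings \emph{three} $1$-sided curves together into a subsurface $N_{3,1}$ and applies the explicit sequence of moves of Figure~\ref{fig:tildeF}, which is engineered to drop $\#{\mathcal P}_1$ by two \emph{while preserving orientability} (the closed case $N_{3,0}$ requiring the separate sequence of Figure~\ref{fig:N30}). A single Move~IV on a $K_1$ cannot do this, and the ``homology bookkeeping'' you anticipate would show the opposite of what you claim.

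A smaller point: the boundary curve of $K_1$ is peripheral in $K_1$; it is not the curve $c_2$ of Lemma~\ref{lem:onlyone}, which is the (unique) primitive \emph{non-peripheral} $2$-sided curve in the interior of $K_1$ and is the output of Move~IV. Your sentence identifying the two should be corrected, though this is cosmetic compared to the two gaps above.
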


\begin{proof} By Lemma \ref{lem:opd}, we may assume that the surface $S-\cup {\mathcal P}_1$ is oriented, and certain of its boundary components
(all of them if $S$ is closed) correspond to the 1-sided curves.  By the classical Hatcher-Thurston Theorem,
the usual moves I-II act transitively
to bring together any triples of these 1-sided curve boundary components into a subsurface
homeomorphic to $N_{3,1}$ whose boundary is a pants curve containing furthermore
another pants curves separating two of the 1-sided curve boundary components.   The sequence of
moves supported on $N_{3,1}$ illustrated in Figure \ref{fig:tildeF} shows how to reduce the number of 1-sided curves by two,
as required.  The unique exception is the surface $N_{3,0}$, where the sequence of moves illustrated in
Figure \ref{fig:N30} accomplishes the required task.
\end{proof}

Notice that if ${\mathcal P,\mathcal Q}$ are pants decompositions of $S$ with $\#{\mathcal P}_1,\#{\mathcal Q}_1\leq 2$,
then $\#{\mathcal P}_1=\#{\mathcal Q}_1$ is a topological property of the surface $S$, i.e., the parity of $\#{\mathcal P}_1$
is an invariant for non-orientable surfaces.

\begin{figure}[hbt]%
 \begin{center}
 \includegraphics[width=10cm]{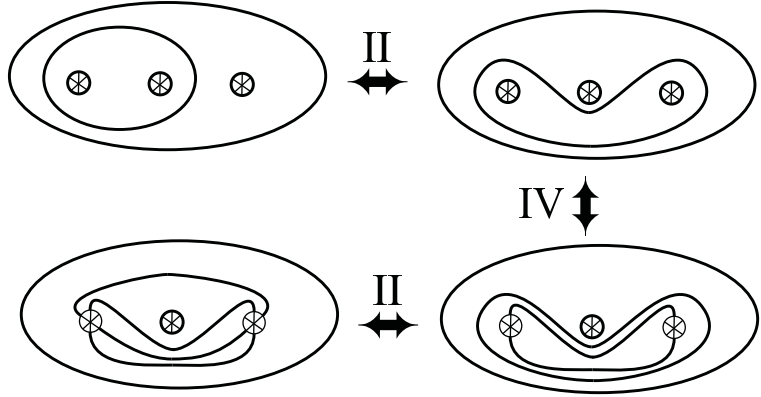}
 \caption{A sequence of moves in $N_{3,1}$ preserving orientability and replacing three 1-sided pants curves and one 2-sided one
 by one 1-sided curve and two 2-sided curves.}
 \label{fig:tildeF}%
\end{center}
\end{figure}

\begin{figure}[hbt]%
 \begin{center}
 \includegraphics[width=10cm]{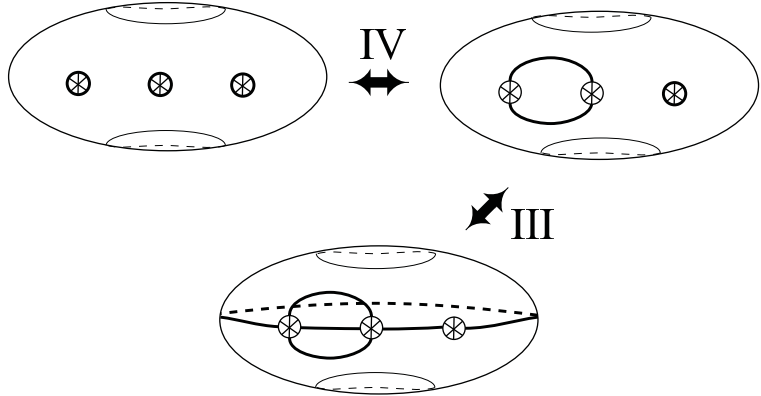}
 \caption{A sequence of moves in $N_{3,0}$ preserving orientability and replacing three 1-sided pants curves 
 by one 1-sided curve and one 2-sided curve.}
 \label{fig:N30}%
\end{center}
\end{figure}

Given a pants decomposition ${\mathcal P}$ and a simple closed curve $c$, we set $i(c,{\mathcal P}')=\sum_{c'\in {\mathcal P}'} i(c,c')$.
Given two pants decomposition ${\mathcal P}, {\mathcal P}'$ of $S$, we define a relative {\it pre-complexity}
$$\gamma_1({\mathcal P},{\mathcal P}') = ~{\rm inf}_{c\in {\mathcal P}_1} i(c,{\mathcal P}'),$$
where $i(\cdot,\cdot)$ denotes the geometric intersection number and $\gamma_1({\mathcal P},{\mathcal P}')$ is taken to be
infinite if ${\mathcal P}_1$ is empty.  The symmetrization 
$$\gamma({\mathcal P},{\mathcal P}')=~{\rm inf}\{\gamma_1({\mathcal P},{\mathcal P}'),
\gamma_1({\mathcal P}',{\mathcal P})\}$$
is the {\it complexity} upon which we shall induct.

Taking Dehn-Thurston coordinates from Theorem \ref{thm:noDT} with respect to
${\mathcal P}'$, it follows that if $\gamma({\mathcal P},{\mathcal P}') =0$, then ${\mathcal P}_1\cap{\mathcal P}'_1\neq\emptyset$.
Furthermore with the previous lemma in mind for $\#{\mathcal P},\#{\mathcal P}'\leq 2$ , if ${\mathcal P}_1=\{ c\}$ and ${\mathcal P}'=\{ c'\}$, then 
$\gamma({\mathcal P},{\mathcal P}') =
~{\rm inf}\{ i(c,{\mathcal P}'),i(c',{\mathcal P})\}$
whereas if ${\mathcal P}_1=\{ c,d\}$ and ${\mathcal P}'=\{ c',d'\}$, then 
$$\gamma({\mathcal P},{\mathcal P}') =~{\rm inf}\biggl\{ {\rm inf}\{i(c',{\mathcal P}),i(d',{\mathcal P}')\}, {\rm inf}\{i(c,{\mathcal P}'),i(d,{\mathcal P}')\}\biggr\}.$$

\begin{theorem}[Generalized Hatcher-Thurston Theorem]  \label{thm:ght} Finite compositions of moves I-IV act transitively on pants decompositions
of surfaces $S=F_{g,r}$ or $S=N_{g,r}$.
\end{theorem}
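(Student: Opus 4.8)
The plan is to induct on the complexity $\gamma({\mathcal P},{\mathcal P}')$ of a pair of pants decompositions, reducing to the orientable case handled by the classical Hatcher--Thurston Theorem together with the Klein-bottle-minus-a-disk analysis of Section 5. First I would use Lemmas \ref{lem:opd} and \ref{lem:1or2} to replace both ${\mathcal P}$ and ${\mathcal P}'$, via finite sequences of moves I--IV, by orientable pants decompositions with at most two $1$-sided curves; since the parity of $\#{\mathcal P}_1$ is a topological invariant of $S$, both reduced decompositions then have the same number (zero, one, or two) of $1$-sided curves. In the case $\#{\mathcal P}_1=0$ we are entirely in the orientable setting of the subsurface $S-\cup{\mathcal P}_1 = S$ minus nothing, so the classical theorem finishes; thus the content is in the cases $\#{\mathcal P}_1\in\{1,2\}$.

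Next I would establish the base case $\gamma({\mathcal P},{\mathcal P}')=0$. As observed just before the theorem statement, taking Dehn--Thurston coordinates with respect to ${\mathcal P}'$ forces ${\mathcal P}_1\cap{\mathcal P}'_1\neq\emptyset$: some $1$-sided curve $c$ of ${\mathcal P}$ is disjoint from (hence isotopic to a curve of) ${\mathcal P}'$. Cutting $S$ along $c$ (and along a second common $1$-sided curve if $\#{\mathcal P}_1=2$ and both are shared), one reduces to pants decompositions of a surface with strictly fewer cross caps that agree on the cut locus; an inner induction on the number of $1$-sided curves, with the orientable Hatcher--Thurston theorem as the ground case, then connects ${\mathcal P}$ to ${\mathcal P}'$ by moves of types I and II supported away from $c$. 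If only one $1$-sided curve is shared when $\#{\mathcal P}_1=\#{\mathcal P}'_1=2$, one first uses the classical transitivity in the orientable complement to bring the second $1$-sided curve of ${\mathcal P}$ and that of ${\mathcal P}'$ into a common $K_1=N_{2,1}$, and invokes Proposition \ref{prop:K1HT} (moves III and IV act transitively on pants decompositions of $K_1$) to reconcile them.

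For the inductive step, suppose $\gamma({\mathcal P},{\mathcal P}')=n>0$; without loss of generality the infimum is realized by a $1$-sided curve $c\in{\mathcal P}_1$ with $i(c,{\mathcal P}')=n$. The idea is to produce, from ${\mathcal P}$, a new pants decomposition ${\mathcal Q}$ obtained by a single move I--IV with $\gamma({\mathcal Q},{\mathcal P}')<n$, after which induction applies. Here Lemma \ref{lem:self} is the key tool: restricting ${\mathcal P}'$ to a regular neighborhood of the complement of $c$ (and any curve of ${\mathcal P}_1$ disjoint from $c$), one finds an outermost sub-arc of some curve of ${\mathcal P}'$ cutting off a disk or an essential sub-configuration in a pair of pants, and an innermost such arc guides a swap of the $1$-sided curve $c$ (or the adjacent $2$-sided curve in the surrounding $K_1$) by move III or IV — exactly as in Figure \ref{fig:trans}, where a Dehn twist along the unique primitive $2$-sided curve of $K_1$ is realized by two moves of type III — in a way that decreases the intersection with ${\mathcal P}'$. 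When the curve of ${\mathcal P}'$ realizing the minimum is instead $2$-sided, or when the relevant subsurface is orientable, the classical surgery arguments of Hatcher--Thurston apply verbatim via moves I and II.

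The main obstacle I anticipate is the inductive step: verifying that one can always find a single elementary move strictly decreasing the complexity, and in particular checking that the surgery dictated by an outermost arc of ${\mathcal P}'$ (i) stays within the four allowed moves — this is where the $K_1$ analysis of Section 5, and especially the identification of moves III--IV with translation on the graph of $1$-sided curves in Corollary \ref{cor:K1cpx}, does the real work — and (ii) does not inadvertently raise the intersection number with some other curve of ${\mathcal P}'$. The bookkeeping is delicate because a $1$-sided curve has only a M\"obius-band neighborhood, so the usual ``push across an arc'' surgery must be reinterpreted through the double cover or through the $N_{2,1}$ containing it; isolating this combinatorics into the self-contained Section 5 lemmas is precisely what makes the induction go through cleanly.
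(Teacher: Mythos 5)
Your proposal captures several of the right ingredients (reduce to at most two $1$-sided curves via Lemmas \ref{lem:opd}--\ref{lem:1or2}, deduce a common $1$-sided curve when $\gamma=0$, use the $K_1$ analysis and petals to drive the complexity down), but the inductive architecture you set up is not the one the paper uses and, as stated, it does not close.

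The paper's proof is a \emph{double} induction: an outer induction on the topological size of the surface (cutting along a common $1$-sided curve to pass to a simpler surface) nested with an inner induction on the complexity $\gamma({\mathcal P},{\mathcal P}')$. You replace this with a single induction on complexity and assert that the inductive step produces a new decomposition ${\mathcal Q}$ by ``a single move I--IV'' with $\gamma({\mathcal Q},{\mathcal P}')<n$. That is not how the descent works and is not achievable in general. What the paper actually does when $\gamma>0$ (say in the case ${\mathcal P}_1=\{c\}$, $i(c,c')>1$) is to take a petal of $c'$, form a shortcut curve $c''$, and build \emph{two} auxiliary pants decompositions ${\mathcal P}''\supset\{c'',\partial N\}$ and ${\mathcal Q}''\supset\{c,\partial N\}$ with $N=N(c\cup c'')\cong K_1$. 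The link ${\mathcal P}\rightarrow{\mathcal Q}''$ is obtained \emph{not by one move} but by the outer induction applied to the cut surface, since $c\in{\mathcal P}\cap{\mathcal Q}''$; then ${\mathcal Q}''\rightarrow{\mathcal P}''$ is a single move III inside $N$, and one verifies $\gamma({\mathcal P}',{\mathcal P}'')<\gamma({\mathcal P},{\mathcal P}')$ (or the symmetric inequality), closing the inner induction. Without the outer induction supplying ${\mathcal P}\rightarrow{\mathcal Q}''$, the chain simply is not available; a single elementary move does not in general produce the needed strict decrease, and trying to force it runs into exactly the bookkeeping problems you flag at the end of your writeup. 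Related to this, the paper must (and does) also handle the boundary cases where the shortcut does not give a strict decrease --- when the petal endpoints are consecutive on $c$ so that $i(c'',{\mathcal P}')=i(c,{\mathcal P}')$, or when $i(c,c')=1$ --- by exhibiting a curve $\alpha$ disjoint from both $c$ and $c'$ and passing through intermediate decompositions sharing a common pants curve, again invoking the outer induction. Your sketch glides over these equality cases.

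Two further omissions: first, the closed surface $N_{3,0}$ has to be treated separately (the paper postpones it to the end of the proof), because the inductive reduction via cutting produces bordered surfaces and the generic petal argument needs a modification when there is no boundary and all three cross-caps are in play; you do not mention it. Second, your base case $\gamma=0$ appeals to an ``inner induction on the number of $1$-sided curves,'' which is not quite the same as the paper's cut-and-recurse on the smaller surface and would itself need to be justified; the cleaner route is the one the paper takes, namely to observe that cutting on the shared curve $c$ leaves two pants decompositions of the cut surface to which the outer inductive hypothesis applies directly.
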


\begin{proof}  We proceed by induction on the Euler characteristic $\chi(S)$ of $S$, and the basis step $\chi(S)=-1$ is covered
by $S=F_{0,3}$ or $S=N_{g,r}$ for $g+r=3$.   There is the unique pants decomposition (by boundary curves) in the oriented case.
Lemma \ref{lem:cde} treats $S=N_{1,2}$, and Proposition \ref{prop:K1HT} covers $S=N_{2,1}$.  We shall postpone the
special discussion of $S=N_{3,0}$, namely, the connected sum of three projective planes and no boundary, until the end. 
(Since the inductive step in all other cases devolves to a surface with boundary, it is legitimate to separately treat this case in this way.)
Within the induction over $\chi(S)\leq -2$, we shall further induct upon the complexity.  Furthermore, if $S$ is orientable, then the classical Hatcher-Thurston theorem
applies.

Given two pants decompositions ${\mathcal P},{\mathcal P}'$ of $S$, we apply Lemma \ref{lem:1or2} to without loss of generality assume that ${\mathcal P},{\mathcal P}'$ are oriented with  $\#{\mathcal P}_1= \#{\mathcal P}'_1=1$ or $\#{\mathcal P}_1= \#{\mathcal P}'_1=2$.
Now induct over the complexity $\gamma=\gamma({\mathcal P},{\mathcal P}')$.
As noted before, if $\gamma=0$, then there is a 1-sided curve $c$ common to ${\mathcal P}$ and ${\mathcal P}'$ by the Generalized Dehn-Thurston Theorem, and we may cut on $c$ to decrease the Euler characteristic and apply the inductive hypothesis.

Let us suppose now that $\gamma>0$ and consider separately the two cases $\# {\mathcal P}_1= \#{\mathcal P}'_1=1$ or $2$.
A convenient notation here and throughout is to write simply ${\mathcal P}\rightarrow {\mathcal Q}$ if the pants decompositions ${\mathcal P}$ and ${\mathcal Q}$ are related
by moves I-IV.  This binary relation is in fact an equivalence relation by definition.

In the former case, again ${\mathcal P}_1=\{ c\}$ and ${\mathcal P}_1'=\{ c'\}$, where $c\cap c'\neq 0$ since ${\mathcal P}'$ is orientable.  
First, consider the case that $i(c,c')=1$, in which case a regular neighborhood $N=N(c\cup c')$ of $c\cup c'$ in $S$ is homeomorphic to $N_{1,2}$ with $c,c'$ providing the
two pants decompositions discussed before.  Extend $\{\partial N,c\}$ to a pants decomposition ${\mathcal P}''$ of $S$.  Since $c\in {\mathcal P}\cap{\mathcal P}''$, we may cut on $c$ and conclude that ${\mathcal P}\rightarrow{\mathcal P}''$ by the inductive hypothesis on Euler characteristic.  Likewise, we have
$c'\in ( ({\mathcal P}''-\{ c\})\cup\{ c'\})\cap {\mathcal P}'$, so also ${\mathcal P}'\rightarrow {\mathcal P}''$, and it follows by transitivity that
${\mathcal P}\rightarrow{\mathcal P}'$.  Thus, we may henceforth suppose that $i(c,c')>1$.

\begin{figure}[hbt]%
 \begin{center}
 \includegraphics[width=6cm]{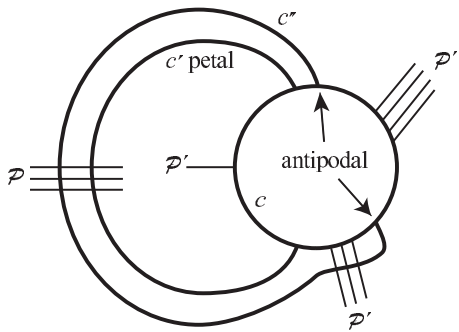}
 \caption{Construction of the curve $c''$ from a petal of $c'$.}
 \label{fig:firstcase}%
\end{center}
\end{figure}

For the purposes of the proof, a component of $c'-c$ is called a ``petal''; equivalently, a petal is one of the subarcs of $c'$ arising between
two points of $c$ consecutive along $c'$.
Choose such a petal and let $c''$ be the simple closed curve formed by connecting the petal endpoints  with a small subarc of  $c$ 
 as illustrated in Figure \ref{fig:firstcase}.  Thus, 
 $$  i(c'',{\mathcal P}')\leq i(c,{\mathcal P}').$$
 Equality holds here only if the endpoints of the petal are also consecutive along $c$.  In this case, there is a simple cycle $\alpha$ in $S$ comprised of the petal
 from $c'$ together with this subarc of $c$ (which is itself a petal for $c$).  Extend $\{\alpha ,c\}$ to a pants decomposition ${\mathcal P}''$
 and $\{\alpha ,c'\}$ to ${\mathcal Q}''$. As before, induction on Euler characteristic shows that 
 $c\in{\mathcal P}\cap{\mathcal P}''$ implies ${\mathcal P}''\rightarrow {\mathcal P}$, $c'\in{\mathcal P}'\cap{\mathcal Q}''$ implies ${\mathcal P}'\rightarrow {\mathcal Q}''$,
 and $\alpha\in{\mathcal P}''\cap{\mathcal Q}''$ implies ${\mathcal P}''\rightarrow {\mathcal Q}''$, whence  ${\mathcal P}\rightarrow{\mathcal P}'$ as required.
  Thus, we may henceforth assume strict inequality $i(c'',{\mathcal P}')<i(c,{\mathcal P}')$.
  
Now build new pants decomposition ${\mathcal P}''$ and ${\mathcal Q}''$ (using this same notation from above for variable pants decompositions now), where ${\mathcal P}''$ contains
$\{c'',\partial N \}$  and ${\mathcal Q}''$ contains $\{ c,\partial N\}$ with $N=N(c\cup c'')$.  It follows by induction on Euler characteristic that ${\mathcal P}\rightarrow{\mathcal Q}''$,
and in fact also ${\mathcal P}\rightarrow {\mathcal P}''$ since ${\mathcal P}''$ and ${\mathcal Q}''$ differ by a move III in $N$ by construction.
Furthermore, we have
$$\gamma({\mathcal P}',{\mathcal P}'')\leq\gamma_1({\mathcal P}'',{\mathcal P}')\leq i(c'',{\mathcal P}')<i(c,{\mathcal P}')=\gamma_1({\mathcal P},{\mathcal P}'),$$
where the first inequality follows from the fact that $\gamma$ is also defined as an infimum, the second since $\gamma_1$ is defined as an infimum, the third (strict) inequality as already discussed, and the last equality since ${\mathcal P}_1=\{ c\}$ in the case we are considering.  Thus, $\gamma({\mathcal P}',{\mathcal P}'')\leq \gamma_1({\mathcal P},{\mathcal P}')$ with ${\mathcal P}\rightarrow{\mathcal P}''$, and interchanging the roles of ${\mathcal P}$ and ${\mathcal P}'$ provides the analogous
$\gamma({\mathcal P},{\mathcal Q}'')<\gamma_1({\mathcal P}',{\mathcal P})$ with ${\mathcal P}'\rightarrow{\mathcal Q}''$.

Since $\gamma({\mathcal P},{\mathcal P}')$ is defined as the infimum of 
$\gamma_1({\mathcal P},{\mathcal P}')$ and $\gamma_1({\mathcal P}',{\mathcal P})$, it follows that 
either $\gamma({\mathcal P}',{\mathcal P}'')<\gamma({\mathcal P},{\mathcal P}')$ with ${\mathcal P}\rightarrow{\mathcal P}''$
or $\gamma({\mathcal P},{\mathcal Q}'')<\gamma({\mathcal P},{\mathcal P}')$ with ${\mathcal P}'\rightarrow{\mathcal Q}''$, so
in any case,
${\mathcal P}\rightarrow{\mathcal P}'$ by induction on complexity as required in the first case.

\begin{figure}[hbt]%
 \begin{center}
 \includegraphics[width=11cm]{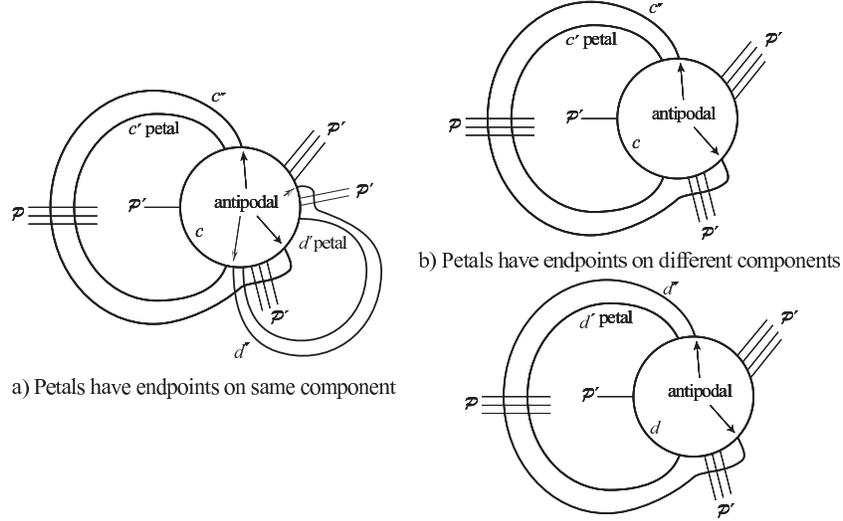}
 \caption{Construction of curves $c'',d''$ from petals of $c',d'$.}
 \label{fig:secondcase}%
\end{center}
\end{figure}

In the latter case, let ${\mathcal P}_1=\{ c,d\}$ and ${\mathcal P}'_1=\{ c',d'\}$ and define a petal of $c'$ or $d'$ to be component of $c'-(c\cup d)$ or $d'-(c\cup d)$, respectively,
whose endpoints lie on a common component of $c\cup d$.  
We claim that there must be a petal of $c'$ with both endpoints either on $c$ or on $d$, for otherwise, all consecutive subarcs of $c'$ with endpoints on $c\cup d$ must have one endpoint on $c$ and one on $d$, say for a total of $m$ such arcs; however, $m$ must be even since $c'$ is closed, and yet $m$ must be odd since $c'$ is 1-sided.  Thus, there indeed must be a petal of $c'$ with endpoints on the same pants curve $c$ or $d$ and likewise for $d'$.
Up to relabeling the pants curves, there are two possible scenarios: either there are petals of both $c'$ and $d'$ with endpoints on $c$, or $c'$ has a petal with endpoints on $c$ and $d'$ has a petal with endpoints on $d$.  Build new curves $c''$ and $d''$ as before as illustrated in Figure \ref{fig:secondcase} in the two scenarios, where perhaps $c''\cap d''\neq \emptyset$ in either case.  

Construct pants decompositions ${\mathcal P}'' \supset \{ d,\partial N(c\cup c'')\}$ and ${\mathcal Q}''\supset\{ d,\partial N(c\cup d'')\}$ in the former scenario
and ${\mathcal Q}''\supset\{ d,\partial N(d\cup d'')\}$ in the latter. 
Thus, ${\mathcal P}\rightarrow{\mathcal P}''\rightarrow{\mathcal Q}''$ since $d\in {\mathcal P}\cap {\mathcal P}''\cap{\mathcal Q}''$.  By construction, we have
\begin{equation}\label{eqn:first}
i(c'',{\mathcal P}')\leq i(c,{\mathcal P}')~{\rm and}~i(d'',{\mathcal P}')\leq i(d,{\mathcal P}'),
\end{equation}
so
$$\begin{aligned}
\gamma({\mathcal P}'',{\mathcal P}')&\leq\gamma_1({\mathcal P}'',{\mathcal P}')\leq i(c'',{\mathcal P}')\leq i(c,{\mathcal P}'),\\
\gamma({\mathcal Q}'',{\mathcal P}')&\leq\gamma_1({\mathcal Q}'',{\mathcal P}')\leq i(d'',{\mathcal P}')\leq i(d,{\mathcal P}'),\\
\end{aligned}$$
where the two leftmost inequalitites hold since $\gamma$ and $\gamma_1$ are defined as infima.
Thus, ${\rm inf}\{(\gamma({\mathcal P}'',{\mathcal P}'),\gamma({\mathcal Q}'',{\mathcal P}')\}\leq \gamma_1({\mathcal P},{\mathcal P}')$
with ${\mathcal P}\rightarrow {\mathcal P}''$ and ${\mathcal P}\rightarrow {\mathcal Q}''$.  Moreover by construction, we also have
\begin{equation}\label{eqn:second}
i(c'',{\mathcal P})\leq i(c',{\mathcal P})~{\rm and}~i(d'',{\mathcal P})\leq i(d,{\mathcal P}),
\end{equation}
so
$$\begin{aligned}
\gamma({\mathcal P}'',{\mathcal P})&\leq\gamma_1({\mathcal P}'',{\mathcal P})\leq i(c'',{\mathcal P})\leq i(c,{\mathcal P}),\\
\gamma({\mathcal Q}'',{\mathcal P})&\leq\gamma_1({\mathcal Q}'',{\mathcal P})\leq i(d'',{\mathcal P})\leq i(d,{\mathcal P}),\\
\end{aligned}$$
and so 
${\rm inf}~\{(\gamma({\mathcal P}'',{\mathcal P}),\gamma({\mathcal Q}'',{\mathcal P})\}\leq \gamma_1({\mathcal P}',{\mathcal P})$
with ${\mathcal P}\rightarrow{\mathcal P}''$ and  ${\mathcal P}\rightarrow{\mathcal Q}''$.
Again, if strict inequalities hold in  equations (\ref{eqn:first}) and (\ref{eqn:second}), then the desired result ${\mathcal P}\rightarrow{\mathcal P}'$ follows by induction on complexity,
and it remains only to analyze the extreme cases of equality in equations (\ref{eqn:first}) and (\ref{eqn:second}).

If the equality $i(c'',{\mathcal P}')=i(c,{\mathcal P}')$ holds, then again the endpoints of the petal of $c'$ must be consecutive along $c$,
and there is an essential simple cycle $\alpha$ disjoint from $c$ and $c'$.  Thus, ${\mathcal P}\rightarrow {\mathcal P}'$ as required by induction
on Euler characteristic as before with a similar argument for the equality $i(d'',{\mathcal P}')=i(d,{\mathcal P}')$.  If $c'$ intersects $\cup{\mathcal P}$ at a point not in the $c'$ petal, then $i(c'',{\mathcal P})<i(c',{\mathcal P})$.  In particular, $i(c'',{\mathcal P})=i(c',{\mathcal P})$ implies that $i(c',d)=0$.  Let ${\mathcal P}''$ be a pants decomposition containing $\{ c',d\}$ so that ${\mathcal P}\rightarrow {\mathcal P}''$ and ${\mathcal P}'\rightarrow {\mathcal P}''$ by induction on Euler characteristic, whence ${\mathcal P}\rightarrow{\mathcal P}'$ as required.

It remains only to consider the special surface $S=N_{3,0}$.  The surface $S$ is filled by triples of disjoint 1-sided curves, and we choose a particular one represented as three cross caps on the sphere, thus making a choice of pants decomposition ${\mathcal P}$ of $S$,
and label the cross caps $a,b,c$.  Now consider any other pants decomposition ${\mathcal P}'$ of $S$, which must have at least one (either one or three) 1-sided curves, and choose a curve $c'\in{\mathcal P}'_1$.  If there is a component of $c'-(a\cup b\cup c)$ which has its endpoints on the same component of $a\cup b \cup c$, then the previous argument based on petals applies to conclude that ${\mathcal P}\rightarrow {\mathcal P}'$.  In the contrary case, suppose there are $p$ complementary components
of $c'-(a\cup b\cup c)$ connecting $a$ and $b$, $q$ connecting $b$ and $c$, and $r$ connecting $c$ and $a$.  Each of $p+q$, $q+r$ and $r+q$ must be even, so one of $p,q,r$ is even if and only if all three are even.  If all three are even, then the curve described by $p,q,r$ is comprised of two parallel copies, which is absurd since $c'$ is connected.
Thus, each of $p,q,r$ is odd.  It follows that $c'$ contains the curve described by $p=q=r=1$, and again since $c'$ is connected, it coincides with this curve. Cutting $S$ on $c'$ produces a torus minus a disk, and finite iterations of the move I act transitively on pants decompositions of this surface
by the classical Hatcher-Thurston theorem.  It follows that finite compositions of moves I-IV  act transitively on 
pants decompositions of  $S=N_{3,0}$ as well.
\end{proof}

\section{Applications and Perspectives}\label{sec:apps}

In a companion paper \cite{PPY}, we give an explicit description in coordinates of the action of the mapping class group on arcs and curves in the spirit and sense of \cite{Penner-thesis} using the coordinates we provide in section~\ref{s:Dehn-Thurston}.  It seems clear that both systems of generalized Fenchel-Nielsen and Dehn-Thurston coordinates should be generally useful.  Other intriguing questions include what vestige  of the Weil-Petersson metric or its 
K\"ahler two-form might remain in the non-orientable case and the related question of how one might go about quantizing Teichm\"uller theory in this case.

\appendix\relax

\section{Thurston's boundary for Teichm\"uller space}\label{app:a}

Let $S$ denote a possibly non-orientable and possibly bordered compact surface $S$.
The closure $\overline{{\mathcal T}}(S)$
 of the image of the map
${\mathcal T}(S)\hookrightarrow P\mathbb{R}_+^{\Pi{\mathcal S}(S)}$ (recalled in \S \ref{s:background}) is compact and the boundary $\overline{{\mathcal T}}(S)\setminus {\mathcal T}(S)$ is the image of the map $
{\mathcal {PF}}(S)\hookrightarrow P\mathbb{R}_+^{\Pi{\mathcal S}(S)}$  (also recalled in \S \ref{s:background}). One gets local charts near a point of the boundary by fixing a pair of pants decomposition $\mathcal{P}$ and associating to each hyperbolic structure $\rho$ on $S$ a canonical foliation $\mathcal{F}(\rho)$ obtained by taking pieces of equidistant curves to some geodesic curves in the pairs of pants as in \cite{FLP}, Expos\'e 8. This gives a homeomorphism between $\mathcal{MF}(S)$ and a subset of $\mathcal{MF}(S)$ consisting of measured foliations transverse to each curve in $\mathcal{P}$. There is a ``fundamental inequality" which compares the lengths of any element of $\Pi{\mathcal S}(S)$ to the intersection number of this curve with the foliation $\mathcal{F}(\rho)$, and using this inequality, one defines charts near the boundary points of the union $\mathcal{T}(S)\cup \mathcal{PF}(S)$. The proof imitates that given in \cite{FLP} (Expos\'e 8) for the case of an orientable surface. 

\section{Punctured surfaces}\label{app:zero}

In contrast to the bordered surfaces considered in the main text, we discuss surfaces with punctures in this appendix, that is,
we study a surface $S$ that is either the orientable surface $F_g^s$ or non-orientable surface $N_g^s$ of genus $g\geq 0$ with $s\geq 0$ punctures or distinguished points and no boundary.
The definition of a pants decomposition ${\mathcal P}$ of $S$ is exactly as before, namely, complementary regions to 
${\mathcal P}$ in $S$ are homeomorphic to the interior of a pair of pants including now the possibility that a complementary region might 
be a once- or twice-punctured disk or a thrice-punctured sphere, where we can imagine each boundary component in a bordered surface collapsed to a distinct puncture.  

The elementary moves of Figure \ref{fig:elementary} are likewise interpreted: any depicted boundary component might instead represent a puncture, a boundary component or a cross-cap both before and after the move.  The statement and proof of the generalized Hatcher-Thurston Theorem for punctured surfaces follows verbatim the earlier discussion.

The statement and proof of the generalized Fenchel-Nielsen Theorem for punctured surfaces  is also  literally unchanged for a possibly non-orientable and possibly punctured surface $S$.  In effect,  any boundary component is replaced by a puncture in $S$, and both parameters for a boundary component of a bordered surface are simply dropped.  (Indeed, it is natural to imagine the length parameters as vanishing with twists undefined.) There are still moduli that determine the locations of the punctures within the surface, but the parameters for the pants curves in $\partial S$ themselves are dropped.  

The generalized Dehn-Thurston Theorem also pertains essentially without change, again dropping intersection and twisting numbers
for pants curves representing punctures.  Yet another  variant we shall not further discuss here drops only the twisting numbers but keeps the intersection numbers of the vestigial boundary components; this version importantly includes arc complexes of punctured surfaces.

However, there is more to explain in the context of Thurston's boundary for the Teichm\"uller space ${\mathcal T}(S)$ of $S$ defined exactly as before (but now absent any boundary constraints on isotopies).  Namely, a measured foliation of $S$ is said to have ``compact support'' if it has no 
peripheral leaves and no leaf asymptotic to a puncture.  Let ${\mathcal{MF}}_0(S)\subseteq{\mathcal {MF}}(S)$ denote the subspace of 
measured foliations of compact support with corresponding projective space ${\mathcal {PF}}_0(S)$.

\begin{theorem} [Generalized Thurston Boundary for possibly punctured surfaces]\label{thm:TBp}
 Fix a pants decomposition ${\mathcal P}$ of a possibly non-orientable and possibly punctured surface $S$.
Then ${\mathcal{PF}}_0(S)$ is thus a sphere
 of dimension $\#{\mathcal P}_1+2\#{\mathcal P}_2-1$, which has a natural piecewise-linear structure independent of the pants decomposition
 and provides a boundary $\overline{\mathcal{T}}(S)={\mathcal T}(S)\cup {\mathcal{PF}}_0(S)$ to the open ball that is Teichm\"uller space ${\mathcal T}(S)$ so as to form a closed ball
 $\overline{\mathcal{T}}(S)$.  The usual
 action of the mapping class group of $S$ on ${\mathcal T}(S)$ extends continuously to $\overline{\mathcal T}(S)$ by its natural action on 
 ${\mathcal{PF}}_0(S)$.
 \end{theorem}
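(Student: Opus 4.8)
The plan is to transplant the proof of Theorem~\ref{thm;TB} for bordered surfaces to the punctured setting, making everywhere the two systematic substitutions foreshadowed in appendix~\ref{app:zero}: one works with the space $\mathcal{MF}_0(S)$ of compactly supported measured foliations in place of $\mathcal{MF}(S)$, and one drops the intersection and twisting parameters of any pants curve that bounds a once- or twice-punctured disk. With these conventions in force, the sphericity and dimension assertions are an immediate corollary of the punctured version of the generalized Dehn--Thurston Theorem, which (as noted in appendix~\ref{app:zero}) holds with no change in the argument: exactly as in the bordered case, each $1$-sided pants curve contributes a factor $\mathbb{R}$ and each interior $2$-sided pants curve a factor $\mathbb{R}^2$, an intersection number glued to a twist along the asymptote as in the gluing convention stated after Theorem~\ref{thm:noDT}, so $\mathcal{MF}_0(S)\approx\mathbb{R}^{\#\mathcal{P}_1+2\#\mathcal{P}_2}$; projectivizing then gives that $\mathcal{PF}_0(S)$ is a sphere of dimension $\#\mathcal{P}_1+2\#\mathcal{P}_2-1$.

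Next I would dispatch the piecewise-linear structure exactly as in the proof of Theorem~\ref{thm;TB}: the transition maps between the Dehn--Thurston charts attached to two different pants decompositions are piecewise-integral-linear, and the verification of this is lengthy but follows the same lines as the orientable computation in \cite{Penner-thesis}, the non-orientable and punctured features affecting only the bookkeeping. This produces a well-defined PL sphere $\mathcal{PF}_0(S)$ independent of $\mathcal{P}$.

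It then remains to show that this sphere compactifies $\mathcal{T}(S)$ to a closed ball with continuous mapping class group action, and here I would imitate appendix~\ref{app:a}, hence ultimately \cite{FLP}, Expos\'e~8: fixing $\mathcal{P}$, associate to each hyperbolic metric $\rho$ the canonical singular foliation $\mathcal{F}(\rho)$ built from families of curves equidistant to geodesics inside the complementary (now possibly punctured) pairs of pants, and use the fundamental inequality comparing, for each $\gamma\in\Pi\mathcal{S}(S)$, the $\rho$-length of $\gamma$ with $i(\gamma,\mathcal{F}(\rho))$ to build charts near the boundary points of $\mathcal{T}(S)\cup\mathcal{PF}_0(S)$ inside $P\mathbb{R}_+^{\Pi\mathcal{S}(S)}$. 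Continuity of the mapping class group action on $\overline{\mathcal{T}}(S)$ then follows from the naturality of the embedding into $P\mathbb{R}_+^{\Pi\mathcal{S}(S)}$, since the group permutes $\Pi\mathcal{S}(S)$ and the actions on $\mathcal{T}(S)$ and on $\mathcal{PF}_0(S)$ are restrictions of the induced action on that product space.

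The step I expect to be the main obstacle is precisely the appearance of $\mathcal{MF}_0$ rather than all of $\mathcal{MF}$: one must check that the canonical foliations $\mathcal{F}(\rho)$ and, more importantly, all projective limits of the $\rho$-length functions as $\rho$ escapes $\mathcal{T}(S)$ are compactly supported --- intuitively because a peripheral curve has $\rho$-length bounded above, being absorbed into a cusp, and so detects no degeneration, whence the limiting foliation has no peripheral leaf and no leaf running to a puncture --- and that, conversely, every element of $\mathcal{PF}_0(S)$ arises as such a limit, i.e., the Expos\'e~8 machinery of \cite{FLP}, transplanted to the punctured non-orientable setting, sees exactly $\mathcal{PF}_0(S)$ and nothing more.
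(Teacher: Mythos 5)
Your proposal follows essentially the same route the paper takes (or rather, gestures at): reduce sphericity to the punctured Dehn--Thurston Theorem, invoke the piecewise-linear change-of-coordinates argument from \cite{Penner-thesis} for the PL structure, and transplant the Expos\'e~8 machinery of \cite{FLP} --- the canonical foliation $\mathcal{F}(\rho)$ and the fundamental inequality --- to build the compactification, exactly as the paper's appendices~\ref{app:a} and \ref{app:zero} indicate. The one place you go beyond what the paper writes down is your explicit flagging of the compact-support issue --- that the $\rho$-lengths of peripheral curves stay bounded (indeed tend to zero) in a cusp, so no degeneration is detected there, and that one must verify the Expos\'e~8 limits land precisely in $\mathcal{PF}_0(S)$; this is a genuine subtlety of the punctured case that the paper elides, and your reasoning for it is correct.
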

 
 Let us also emphasize that there is a hybrid theory that includes both
 punctures and boundary components, each of the latter oriented and 
 containing a distinguished point.

\end{document}